\newcommand{\n}{\noindent}
\newcommand{\vp}{\varepsilon}
\newcommand{\bb}[1]{\mathbb{#1}}
\newcommand{\cl}[1]{\mathcal{#1}}
\newcommand{\ovl}{\overline}
\theoremstyle{plain}
\newtheorem{thm}{Theorem}[section]
\newtheorem{lem}[thm]{Lemma}
\newtheorem{pro}[thm]{Proposition}
\newtheorem{cor}[thm]{Corollary}
\theoremstyle{definition}
\newtheorem{dfn}[thm]{Definition}
\theoremstyle{remark}
\newtheorem{rem}[thm]{Remark}
\numberwithin{equation}{section}
\def\RR{\bb R}
\def\CC{\bb C}
\def\E{\bb E}
\def\N{\bb N}
\def\P{\bb P}
\def\T{\bb T}
\def\RR{\bb R}
\def\R{\bb R}
\def\CC{\bb C}
\def\C{\bb C}
\def\E{\bb E}
\def\N{\bb N}
\def\P{\bb P}
\def\T{\bb T}
\def\z{z}
\begin{document}
\def\d{\delta}

 \title{On  
 uniformly  bounded orthonormal Sidon systems}

\author{by\\
Gilles  Pisier\\
Texas A\&M University and UPMC-Paris VI \footnote{Partially supported by  
    ANR-2011-BS01-008-01.}}

\maketitle
\begin{abstract}
 In answer to a question raised recently by Bourgain and Lewko, we show,   that any
 uniformly  bounded  subGaussian orthonormal system
   is
  $\otimes^2$-Sidon. This sharpens their result that it is 
 ``5-fold tensor Sidon", or $\otimes^5$-Sidon in our terminology.   The proof is somewhat reminiscent of the author's original one for (Abelian) group characters, based on  ideas due to Drury and Rider.
However, we use Talagrand's majorizing measure theorem in place of Fernique's metric entropy lower bound.
We also show that  
a uniformly  bounded orthonormal system
is randomly Sidon iff it is $\otimes^4$-tensor Sidon,
or equivalently $\otimes^k$-Sidon for some (or all) $k\ge 4$.
Various generalizations
are presented, including the case of random matrices, for systems analogous
to the Peter-Weyl decomposition for compact non-Abelian groups.
In the latter setting we also include a new proof of Rider's 
unpublished result that randomly Sidon sets are Sidon, which implies that the union
of two Sidon sets is Sidon.  
 \end{abstract}
 \def\tr{{\rm tr}}

\newpage

The study of 
``thin sets" and in particular Sidon sets in discrete Abelian groups was  actively developed
in the 1970's and  1980's.
Among the early fundamental results, Drury's proof of the stability of Sidon sets under finite unions stands out (see \cite{LR}). 
  Rider's work \cite{Ri} connected Sidon sets to random Fourier
series. 
This led the author to
a characterization of Sidon sets in terms of Rudin's $\Lambda(p)$-sets
(see \cite{Pi,MaPi})
and eventually in \cite{Pis} to an arithmetic characterization of Sidon sets.
Bourgain \cite{Bo} gave a different proof, as well as a host of other results on related questions.
  The 2013 book \cite{GH} by Graham and Hare gives an account
  of this subject, updating  the  1975  one \cite{LR}  by Lopez and Ross. 
  Concerning $\Lambda(p)$-sets see Bourgain's   survey   \cite{Bo2}.
 See also \cite{LQ} for connections with Banach space theory.
  
Most of the results on lacunary sets crucially use the group structure
(\cite{Bo1} is a notable exception).
However, quite recently Bourgain and Lewko \cite{BoLe} were able
to obtain several analogues for uniformly bounded 
orthonormal systems. We pursue the same theme in this paper.

Let $\psi_2(x)=\exp x^2 -1$. 
Let $C$ be a constant.  
We will say that
an orthonormal system $(\varphi_n )$ in $L_2 (T,m)$ (here $(T,m)$ is any probability space) is a {\it $C$-subGaussian system}
 if for any sequence $y=(y_n )$ in $l_2$ we have
\begin{equation}\label{1}
 \|\sum y_n \varphi_n \|_{L_{\psi_2}}\leq C(\sum |y_n |^2 )^{\tfrac{1}{2}},
\end{equation}
where  ${L_{\psi_2}}$ is the Orlicz space  on $(T,m)$ associated to $\psi_2$,
with norm defined by
 \begin{equation}\label{orl}\|f\|_{\psi_2}  = \inf\{t> 0\mid {\bb E}\exp|f/t|^2\le e\}. \end{equation}
In \cite{BoLe}   this  is called a $\psi_2 (C)$-system but
we prefer to use a different term.
This is a variant of the notion of subGaussian random process,
as considered in \cite{Ka68} or \cite[p. 24]{MaPi}. It also appears
under the name
 ``$\sigma$-generalized Gaussian" in  e.g. \cite[p. 236]{Sto}.
 Indeed,
assuming without loss of generality that the  $\varphi_n $'s all have
 vanishing mean, then \eqref{1} holds for some $C$ iff 
 for some $\sigma>0$ we have for all $y\in \ell_2$
  \begin{equation}\label{sub0}
  \E \exp  \Re(\sum y_n \varphi_n) \le \exp{ \sigma^2 \sum |y_n|^2/2}.\end{equation}
  Clearly the latter forces $\E\varphi_n=0$ for all $n$
  while \eqref{1} does not. But this is the only significative difference.
  Indeed,   assuming  $\E\varphi_n=0$ for all $n$,
it is not hard to show that \eqref{1} implies \eqref{sub0} 
 for some $0\le \sigma<\infty $ depending only on $C$.
Conversely \eqref{sub0} for some $\sigma$
implies \eqref{1} for some $C<\infty$  depending only on $\sigma$  (and $C\simeq  \sigma$
 for the best possible values).

Classical examples of such systems include sequences of independent
identically distributed (i.i.d.) Gaussian random variables,
or independent mean zero uniformly bounded ones. They also
include Hadamard lacunary sequences of the form
$\varphi_n(t)=\exp{(iN(n)t)}$ where $\{N(n)\}$ is an increasing sequence
such that $\liminf N(n+1)/N(n)>1$. Roughly, one may interpret
\eqref{1} as expressing a certain form of independence of
the system $(\varphi_n )$. When the system is bounded in $L_\infty (T,m)$,
  the notion of Sidon system constitutes another  form of independence:
  we say that $(\varphi_n )$ is Sidon if there is a constant $\alpha $
  such that for any finitely supported scalar sequence
  \begin{equation}\label{1bis}\sum |a_n| \le \alpha  \|\sum a_n \varphi_n\|_\infty.\end{equation}
  Assume that $(\varphi_n )$  are characters on a compact Abelian group.
  Then $\eqref{1bis}\Rightarrow \eqref{1}$ by a classical result due to
  Rudin    \cite{Ru}. Conversely,
  in \cite{Pi} the author combined harmonic analysis results
  (due to Drury and Rider, see \cite{Ri})
  with probabilistic results on stationary Gaussian processes (due mainly to Fernique) to show that     $\eqref{1}\Rightarrow \eqref{1bis}$.
  Bourgain gave an alternate proof in \cite{Bo}.

  Recently, Bourgain and Lewko \cite{BoLe} considered the question whether
  the preceding implications still held
  for more general orthonormal systems bounded in $L_\infty (T,m)$.
  In such generality it is easy to see that $\eqref{1bis}\not\Rightarrow \eqref{1}$, because the direct sum of a Sidon system
  with an \emph{arbitrary} system satisfies  \eqref{1bis}. 
  Conversely Bourgain and Lewko construct an   example showing
  that also $\eqref{1}\not\Rightarrow \eqref{1bis}$, but 
  that \eqref{1} nevertheless implies
  a weak form of \eqref{1bis},  namely they show in \cite{BoLe} 
  that  \eqref{1}  implies that the 
  system $\{\varphi_n(t_1)\cdots \varphi_n(t_5)\}$ defined on the 5-fold product $T\times \cdots \times T$
  satisfies  \eqref{1bis}. Since the latter clearly implies  \eqref{1bis} 
  when the $\varphi_n$'s are group (or semi-group) morphisms, this provides one more proof
  of  $\eqref{1}\Rightarrow \eqref{1bis}$ for characters.
  Naturally they raised the question whether
  5-fold can be replaced by 2-fold, which would then be optimal.
  Our main result Theorem \ref{t1} gives a positive answer.
  We give a more general version in \S \ref{s1}
which  leads to several possibly interesting variants.
The proof makes crucial use of a consequence (see Lemma \ref{lem1})
of Talagrand's majorizing measure Theorem  from \cite{Ta}.

In \S \ref{srs} we consider the analogue of random Fourier series
for uniformly bounded 
orthonormal systems.
We call randomly Sidon the systems that satisfy the analogue of
Rider's condition (i.e. that satisfy \eqref{1bis}
with the right hand side replaced by the average over all signs
of $\|\sum\pm a_n \varphi_n\|_\infty$), and 
 we prove that 4-fold tensor Sidon
is  equivalent to randomly Sidon. Thus  the $k$-fold variant of the Sidon
property
(we   use for this the term $\otimes^k$-Sidon)
is the same notion for all $k\ge 4$.

In \S \ref{s3} we apply a similar generalization to the natural
``non-commutative" analogue of 
  Sidon sets   on non-Abelian compact groups.
  Here orthonormal functions   are replaced by matrix valued
  functions (generalizing irreducible representations),
 for  which the entries suitably renormalized
 form an orthonormal system.
We obtain  an analogue of 
subGaussian $\Rightarrow$ $\otimes^2$-Sidon
(see Corollary \ref{69'}).

 The simplest case of interest is provided by
a random $d\times d$-matrix 
$$t\mapsto [\varphi_{ij}(t)] $$  
with orthonormal entries  satisfying \eqref{1},
and such that for some $C'$  we have a uniform bound
 \begin{equation}\label{61}\sup\nolimits_{t\in T} \|d^{-1/2}\varphi(t) \|_{M_d}\le C'.\end{equation} 
Then   Corollary \ref{69'} (see also Remark \ref{71''}) shows that there is a constant $\alpha=\alpha(C,C')>0$
such that for any matrix $a\in M_d$ we have
 \begin{equation}\label{60} \alpha\tr|a| \le \sup\nolimits_{t_1,t_2\in T}|\tr(d^{-1} \varphi(t_1)\varphi(t_2) a)|.\end{equation}
The prototypical example of $\varphi$ satisfying \eqref{60} and \eqref{61} (with $C'=\alpha=1$) with orthonormal  entries satisfying
  \eqref{1} for some numerical $C$ (independent of $d$) is the case when $d^{-1/2}\varphi$ is a random unitary $d\times d$-matrix uniformly distributed over the
unitary group. In \S \ref{exam} we illustrate by an example the possible applications
   to matrices of our generalized setting. 
   
 In \S \ref{rsms} we consider the notion of ``randomly Sidon"
 for matrix valued functions. We obtain  an analogue of 
 randomly Sidon $\Rightarrow$ $\otimes^4$-Sidon
(see Theorem \ref{rs4}).
   
   In \S \ref{ssco} we briefly discuss   a reinforcement
   of the implication [$\{\varphi_{ij}\}$ $C$-subGaussian ] $\Rightarrow$  \eqref{60} valid when
   $d^{-1/2}\varphi$ is a representation $\pi$ on a compact group. In that case
   it suffices to assume that the character of $\pi$ (namely $t\mapsto \tr(\pi(t))=d^{-1/2}\sum\varphi_{ii}$)
   is $C$-subGaussian.

\section{Sidon systems}\label{s1}

\begin{thm}\label{t1}
 Let $(\varphi_n )$ be an orthonormal system satisfying \eqref{1} and moreover such that 
 \begin{equation}\label{0}\|\varphi_n \|_\infty \leq C^\prime\end{equation}
  for any $n\geq1$.  Then there is a constant $\alpha=\alpha(C,C^\prime)$ such that $\forall a\in \ell_1$
 \begin{equation}\label{2}
  \sum |a_n |\leq\alpha \sup_{(t_1,t_2)\in T\times T}|\sum  a_n \varphi_n (t_1 )\varphi_n (t_2 )|. 
 \end{equation}
\end{thm}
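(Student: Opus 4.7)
The proof combines duality with a Drury--Rider-style randomized construction, with Talagrand's majorizing-measure theorem (packaged here as Lemma \ref{lem1}) replacing the Fernique-type metric-entropy estimate that sufficed in the character case.

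By the standard Hahn--Banach duality argument, inequality \eqref{2} is equivalent to the following interpolation statement: for every finitely supported sign sequence $\xi = (\xi_n) \in \{-1,+1\}^{\mathbb{N}}$ there exists a bounded complex Borel measure $\mu_\xi$ on $T \times T$ with $\|\mu_\xi\| \le \alpha$ and $\int_{T\times T} \varphi_n(t_1)\varphi_n(t_2)\, d\mu_\xi(t_1,t_2) = \xi_n$ for every $n$. So the task is to produce such measures with a norm bound uniform in $\xi$ and in the truncation.

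To construct $\mu_\xi$, I would use a probabilistic, Drury--Rider-type randomization: introduce independent Rademacher signs (or Gaussians) $(\varepsilon_n)$ on an auxiliary probability space $\Omega$, fix a small $\lambda>0$, and look for a kernel of the form
\begin{equation*}
K_\xi(t_1,t_2) = \lambda^{-2}\, \mathbb{E}_\omega \bigl[F_\omega(t_1)\, G_\omega(t_2)\bigr], \qquad d\mu_\xi = K_\xi\, dm \otimes dm,
\end{equation*}
where $F_\omega$ and $G_\omega$ are random modifications of the constant $1$ whose leading $\varphi_n$-Fourier coefficients are $\lambda\varepsilon_n\xi_n$ and $\lambda\varepsilon_n$ respectively. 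Averaging in $\omega$ kills the off-diagonal cross-terms (because $\mathbb{E}\varepsilon_n\varepsilon_m=\delta_{nm}$), so that the leading Fourier coefficient of $K_\xi$ against $\varphi_n\otimes\varphi_n$ is exactly $\xi_n$; lower-order corrections can be absorbed by iterating on the residual, which remains a bounded-sign-sequence target. In this way, randomization substitutes for the group law that was exploited for characters.

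The heart of the proof is then to bound $\|K_\xi\|_{L^1(m\otimes m)}$ uniformly in $N$ and $\xi$. By Cauchy--Schwarz this reduces to controlling $\mathbb{E}_\omega\|F_\omega\|_{L^2(m)}^2$ and $\mathbb{E}_\omega\|G_\omega\|_{L^2(m)}^2$; a routine Orlicz-space computation then brings matters down to bounding, in expectation, the $L^\infty(T)$-norm of the sub-Gaussian random process $t\mapsto\sum\varepsilon_n\xi_n\varphi_n(t)$. This is precisely what Lemma \ref{lem1} provides: Talagrand's majorizing-measure theorem, applied to the Gaussian process $X_t=\sum g_n\xi_n\varphi_n(t)$ whose $L^2$-metric is governed by the subGaussian hypothesis \eqref{1}, yields the desired uniform estimate. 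I expect this last step to be the main obstacle: without translation invariance there is no positive Riesz-product kernel to exploit, and crude $L^1$-bounds obtained by direct expansion swallow the cancellations created by randomization. The sharp two-sided control of Gaussian suprema delivered by Talagrand's theorem---rather than Fernique's weaker, stationary entropy lower bound---is exactly what is needed to bring the result from the Bourgain--Lewko ``$\otimes^5$-Sidon'' down to the ``$\otimes^2$-Sidon'' statement here.
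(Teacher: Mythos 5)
Your sketch is in the right circle of ideas---a Drury--Rider randomization feeding into Talagrand---but the step you yourself flag as ``the main obstacle'' is exactly where it departs from the paper, and it is not resolved as written. There are two concrete gaps. (1) You want to bound $\|K_\xi\|_{L^1}$ by controlling $\E_\omega\sup_t|\sum\varepsilon_n\xi_n\varphi_n(t)|$ via Lemma~\ref{lem1}; but there are no explicit Riesz-product-like $F_\omega,G_\omega$ in the paper and no such $L^\infty(T)$-bound is established or needed. What Lemma~\ref{lem1} actually delivers, once passed through the Hahn--Banach/$L_1$-lifting argument of Proposition~\ref{p2} (M.\ L\'evy), is a regular operator $u:L_1(\P)\to L_1(m)$ of norm $O(C)$ with $u(g_n)=\varphi_n$: the subGaussian system is \emph{dominated by Gaussians as an $L_1$-image}. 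The tensor $T=\int\Phi(\omega)\otimes\Phi(\omega)\,d\P$ then appears canonically as the kernel of $u\otimes u$, not as an ansatz about random perturbations of the constant $1$; its projective norm is automatically $\le\|u\|^2$, with no Cauchy--Schwarz or Orlicz computation on suprema required.

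(2) More seriously, you write that ``lower-order corrections can be absorbed by iterating on the residual.'' No iteration appears in the proof, and without positivity or translation invariance there is no reason such an iteration should contract. The mechanism that replaces it is the Mehler/Ornstein--Uhlenbeck operator $T_\d$ of Lemma~\ref{lem0}: it fixes the first Wiener chaos ($T_\d(g_n)=g_n$), is bounded on $L_1$ by $w_0(\d)$, and multiplies everything orthogonal to $\mathrm{span}\{g_n\}$ by at most $\d$ in $L_2$. Replacing $u$ by $uT_\d$ leaves $\varphi_n$ unchanged, keeps $\|t\|_\wedge\le w_0(\d)$, and forces $\|r\|_\vee\le\d$ for the remainder $r$; the absorption against the uniform bound $C'$ in Corollary~\ref{cbl} then happens in one stroke by taking $\d$ small. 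This spectral ``interpolation property'' of the Gaussian chaos decomposition is precisely the substitute for Riesz-product positivity that your outline correctly identifies as missing but does not supply.
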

\begin{proof} This will be deduced from the more general Corollary \ref{cbl} below.
\end{proof}
\begin{rem} The same proof also shows  that
$$
  \sum |a_n |\leq\alpha \sup_{(t_1,t_2)\in T\times T}|\sum  a_n \varphi_n (t_1 )\ovl{\varphi_n (t_2 )}|. 
$$
Actually, assuming that $(\varphi_n^1)$ and $(\varphi_n^2)$
are two uniformly bounded orthonormal systems satisfying \eqref{1}
with respective constants $C_1,C_2$ and bounds $C'_1,C'_2$.
we will show there is  $\alpha=\alpha(C_1,C_2,C'_1,C'_2)$ such that $\forall a\in l_1$
 \begin{equation}\label{10}
  \sum |a_n |\leq\alpha \sup_{(t_1,t_2)\in T\times T}|\sum  a_n \varphi^1_n (t_1 )\varphi^2_n (t_2 )|. 
 \end{equation}
\end{rem}

In  \cite{BoLe} the system is called $\otimes^2$-Sidon
 if \eqref{2} holds, and $\otimes^k$-Sidon
  if  \eqref{2} holds with $k$-factors
($\varphi_n (t_1 )\varphi_n (t_2 )\cdots \varphi_n (t_k )$)
in place of 2. 
\noindent Theorem \ref{t1} answers the question raised in \cite{BoLe},
whether  \eqref{1} implies $\otimes^2$-Sidon.
 According to  \cite{BoLe}, \eqref{1} implies $\otimes^5$-Sidon
but not $\otimes^1$-Sidon, so ``2" is optimal.

When the $\varphi_n$'s are  Abelian group characters
Theorem \ref{t1} was established in \cite{Pi}.
 Our method closely follows  our original approach in \cite{Pi}, modulo the later progress allowed by Talagrand's majorizing measure theorem from \cite{Ta}. One could also use the subsequent
proof of
 Talagrand's Bernoulli conjecture by Bednorz and Lata\l a
 \cite{BLa}, and use Bernoulli
random variables in a similar fashion
(as we did in our initial draft), but we will content ourselves
 with  the Gaussian case.
  
  Let $(g_n )$  (resp. $(g^{\RR}_n )$)   denote an i.i.d. 
 sequence of complex (resp. real) valued  standard Gaussian random
variables on 
a probability space $(\Omega, {\cl A}, \P)$. In the complex case
  $g_n=2^{-1/2}(g'_n+ig''_n)$
with $(g'_n)$, $(g''_n)$  mutually independent and each
having the same distribution as $(g^{\RR}_n )$.

It is worthwhile to record here the following two easy and
well known observations:
for any Banach space $B$ and any $x_1,\cdots,x_N\in B$
 \begin{equation}\label{c1}
 2^{-1/2} \E\|\sum x_n g^{\RR}_n \| \le 
 \E\|\sum x_n g_n \|\le 2^{1/2} \E\|\sum x_n g^{\RR}_n \|.
 \end{equation}
For  any  matrix $a\in M_N$ with $\|a\|_{M_N}\le 1$, we have
 \begin{equation}\label{c2}
 \E\|\sum\nolimits_{i=1}^N   x_i \sum\nolimits_j a_{ij} g_j \|\le   \E\|\sum\nolimits_1^N  x_j g_j \| .
 \end{equation}
 Indeed, since equality holds when $a$ is unitary, 
 \eqref{c2} follows by an extreme point argument.
 
 In the sequel, we mostly use  the complex Gaussians $(g_n)$ but one could use 
 the real ones $(g^{\RR}_n)$, except that it introduces irrelevant factors
 equal to 2 at various places.

In addition to  Talagrand's work,
 the   crucial ingredient  in the proof is an ``interpolation property" 
 which uses the following observation (for convenience we state this only for $N<\infty$
 but this restriction is not necessary).

 \begin{lem}\label{lem0} Let $0<\d<1$ and  $N\ge 1$.
 Let $P_1$ be the orthogonal projection onto the
span of $[g_1,\cdots,g_N]$.  There is 
 an operator $T_\d:\ L_1(\P)\to L_1(\P)$  such that
 for some $w_0(\d)$ (independent of $N$) we have
  \begin{equation}\label{e7} T_\d(g_n)=g_n \ \forall n=1,\cdots,N,\quad 
 \|T_\d\|\le w_0(\d) \text{ and  }
 \|T_\d(1-P_1) :\ L_2(\P)\to L_2(\P)\|\le \d.\end{equation}
 Moreover the same result
 holds for $(g^{\RR}_n)$.
 \end{lem}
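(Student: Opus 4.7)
My plan is to build $T_\delta$ from the Ornstein--Uhlenbeck (Mehler) semigroup on the Gaussian probability space. For $0\le\rho\le 1$ let $U_\rho$ be the OU semigroup on $L_2(\P)$: it is a contraction on every $L_p(\P)$ ($1\le p\le\infty$) and it acts on the $k$-th Wiener chaos by multiplication by $\rho^k$. Let $E_N=\E[\,\cdot\,|\sigma(g_1,\dots,g_N)]$ and $E_0=\E(\cdot)$; both are $L_1$-contractions. I propose
\[ T_\delta := \delta^{-1}(U_\delta - E_0)\,E_N. \]

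Three checks are needed. (i) By the triangle inequality and the $L_1$-contractivity of $U_\delta$, $E_N$, $E_0$, one gets $\|T_\delta\|_{L_1\to L_1}\le \delta^{-1}(\|U_\delta E_N\|+\|E_0 E_N\|)\le 2\delta^{-1}$, so $w_0(\delta)=2/\delta$ suffices. (ii) For $n\le N$, $E_N g_n=g_n$, and since $g_n$ is a mean-zero first-chaos element we have $(U_\delta-E_0)g_n=\delta g_n$; hence $T_\delta g_n=g_n$. (iii) If $f\perp[g_1,\dots,g_N]$ in $L_2(\P)$, then $E_N f$ is still orthogonal to each $g_n$ ($n\le N$), so viewed as a function of $g_1,\dots,g_N$ its Wiener-chaos decomposition has no first-chaos component and involves only the $0$-chaos and chaoses of order $\ge 2$. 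Since $U_\delta-E_0$ annihilates the $0$-chaos and acts as $\delta^k$ on the $k$-th chaos for $k\ge 1$,
\[ \|(U_\delta-E_0)E_N f\|_{L_2}\le \delta^2\,\|E_N f\|_{L_2}\le \delta^2\,\|f\|_{L_2}, \]
whence $\|T_\delta(I-P_1)\|_{L_2\to L_2}\le\delta$. The real case is handled identically by replacing the complex OU semigroup with its real analogue.

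There is no serious technical obstacle; the main design issue is recognising why each of the three ingredients is needed. The factor $E_N$ is essential to kill the (possibly infinite-dimensional) part of the first chaos perpendicular to $[g_1,\dots,g_N]$, on which $U_\delta-E_0$ alone would act as the identity. Subtracting $E_0$ removes the constant component of $E_N f$, which equals $\E f$ and is generally nonzero even when $f\perp[g_1,\dots,g_N]$ (e.g.\ $f\equiv 1$). Finally, the $\delta^{-1}$ normalization is forced by the condition $T_\delta g_n=g_n$.
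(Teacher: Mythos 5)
Your construction works in the real case and is essentially the paper's, with the cosmetic improvement of precomposing with $E_N$ so that one need not tacitly replace $(\Omega,\P)$ by $(\RR^N,\gamma)$. However, step (iii) contains a genuine gap in the complex case, which is the case the Lemma actually asserts first. If the $g_n$ are standard complex Gaussians, then in the underlying real Gaussian structure ($\RR^{2N}$, with coordinates $g'_n,g''_n$) the first Wiener chaos is the $2N$-dimensional span of $\{g_n,\ovl{g_n}\}$, whereas $P_1$ projects only onto ${\rm span}[g_1,\dots,g_N]$. The implication you use — ``$E_N f\perp g_n$ for all $n\le N$, hence $E_N f$ has no first-chaos component'' — is therefore false: take $f=\ovl{g_1}$. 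One has $f\perp{\rm span}[g_1,\dots,g_N]$ (since $\EE[\ovl{g_1}\,\ovl{g_n}]=0$), so $(I-P_1)f=f$ and $E_Nf=f$; yet $f$ lies in the first chaos, so $U_\d f=\d f$, $E_0f=0$, and $T_\d f=f$. Thus $\|T_\d(I-P_1)\|_{L_2\to L_2}\ge 1$, and the required bound $\le\d$ fails.

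The paper repairs exactly this by one extra step. After obtaining an operator $S_\d$ on $\RR^{2N}$ with $S_\d(g_n)=g_n$, $S_\d(\ovl{g_n})=\ovl{g_n}$, and $S_\d$ small on the orthogonal of ${\rm span}[g_n,\ovl{g_n}]$, it composes with the averaging operator $W=\int_{\T}\bar z\,V_z$, where $V_z$ is the measure-preserving map induced by $g_n\mapsto zg_n$. The operator $W$ is an $L_p$-contraction for all $p$ that fixes $g_n$ and annihilates $\ovl{g_n}$ (more generally, it retains only the bigraded chaos pieces with holomorphic degree exceeding antiholomorphic degree by exactly $1$). Setting $T_\d=WS_\d$ then gives $\|T_\d(I-P_1)\|_{L_2\to L_2}\le\d$ while keeping $T_\d(g_n)=g_n$ and $\|T_\d\|_{L_1\to L_1}\le w_0(\d)$. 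You should incorporate this complex-rotation averaging; the Ornstein--Uhlenbeck semigroup alone cannot distinguish $g_n$ from $\ovl{g_n}$, since both sit in the same chaos.
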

  \begin{proof}  
  First consider the real case when $(g_n)$ are real valued Gaussians.
We have a classical (hyper) contractive semigroup
(sometimes called the Mehler semigroup)
$t\mapsto T(e^{-t})$
multiplying the Hermite polynomials of (multivariate) degree $d$ by  $e^{-dt}$.
We prefer to replace $e^{-t}$ by $\d$.
More explicitly, if  we work on
  $\RR^N$ equipped with the standard Gaussian measure $\gamma$, then 
 \begin{equation}\label{ou}T(\d)(f)=\int  f(\d x+ (1-\d^2)^{1/2}y)  d\gamma(y).\end{equation}
Let $P_0$ be
the projection  onto the constant function 1. 
We  can then take $T_\d=\d^{-1} (T(\d)-P_0)$.
Then \eqref{e7} holds  with $w_0(\d)=2/\d$.\\
The complex case requires a small adjustment:
We write $g_n=2^{-1/2}(g'_n+ig''_n)$ and we apply
the preceding argument   in  $\RR^{2N}$. This
provides us with $S_\d$ such that 
$S_\d(g_n)=g_n$ but also $S_\d(\ovl{g_n})=\ovl{g_n}$
and $ \|S_\d(1-Q) :\ L_2(\P)\to L_2(\P)\|\le \d$,
where $Q$ is the orthogonal projection onto
the orthogonal of span$[g_n, \ovl{g_n}]$.
Consider now for any $z\in \T$ the measure preserving
mapping $V_z:\ L_1 \to L_1$
taking $g_n$ to $zg_n$, and let
$W=\int \bar z V_z$.  Note
$W(g_n)=g_n$ and $W(\ovl{g_n})=0$.
Clearly $\|W:\ L_p \to L_p\|\le 1$
for any $1\le p\le \infty$ and in particular for $p=1,2$.
Then the operator $T_\d= WS_\d$ satisfies
\eqref{e7}.
   \end{proof}
 We will use Talagrand's work in the form of the following result from \cite{Ta} (see also \cite[\S 2.4]{Ta2}).
\begin{lem}[\cite{Ta}]\label{lem1}
 Assume \eqref{1}.  There is a numerical constant $K$ such that for any $N$ and any $x_1 ,\cdots,x_N$ in $l_\infty$ we have
 \begin{equation}\label{3}
  \int\|\sum \varphi_n (t)x_n \|_\infty dm(t)\leq KC\int\|\sum  g_n x_n \|_\infty d\mathbb{P}.
 \end{equation}
   \end{lem}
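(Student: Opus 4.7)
The plan is to obtain Lemma \ref{lem1} as a direct consequence of Talagrand's majorizing measure theorem, applied in both its easy (upper) and hard (lower) forms, to a pair of processes sharing the same natural metric. Writing $\ell_\infty = \ell_\infty(J)$ for the relevant index set, for each $j\in J$ I set
\begin{equation*}
X_j(t) = \sum_{n=1}^N \varphi_n(t)\, x_n(j), \qquad Y_j(\omega) = \sum_{n=1}^N g_n(\omega)\, x_n(j),
\end{equation*}
so that $\|\sum \varphi_n x_n\|_\infty = \sup_j |X_j(t)|$ and $\|\sum g_n x_n\|_\infty = \sup_j |Y_j|$. The Gaussian process $(Y_j)$ carries the canonical pseudo-metric
\begin{equation*}
d(j,k) = (\EE|Y_j-Y_k|^2)^{1/2} = \Big(\sum_n |x_n(j)-x_n(k)|^2\Big)^{1/2},
\end{equation*}
while the subGaussian hypothesis \eqref{1}, applied to the sequence $(x_n(j)-x_n(k))_{n}$, says exactly that
\begin{equation*}
\|X_j-X_k\|_{L_{\psi_2}(T,m)} \leq C\, d(j,k).
\end{equation*}

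Next I invoke Talagrand's theorem in two forms. The first is the generic-chaining upper bound, valid for any process with subGaussian increments: for a universal constant $K_1$ and any base point $j_0\in J$,
\begin{equation*}
\int \sup_{j\in J} |X_j(t) - X_{j_0}(t)|\,dm(t) \leq K_1 C\,\gamma_2(J,d).
\end{equation*}
The second is the substantive direction of the majorizing measure theorem from \cite{Ta}: for a universal $K_2$,
\begin{equation*}
\gamma_2(J,d) \leq K_2\,\EE\sup_{j\in J} |Y_j - Y_{j_0}| \leq 2K_2\,\EE\sup_{j\in J} |Y_j|.
\end{equation*}
Chaining these yields $\int \sup_j |X_j(t)-X_{j_0}(t)|\,dm(t) \leq 2K_1K_2\, C\,\EE\sup_j|Y_j|$.

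Finally, to replace $X_j - X_{j_0}$ by $X_j$, I control the base-point contribution via orthonormality: for any $j_0\in J$,
\begin{equation*}
\int |X_{j_0}(t)|\,dm(t) \leq \|X_{j_0}\|_{L_2(m)} = \Big(\sum_n |x_n(j_0)|^2\Big)^{1/2},
\end{equation*}
and the right-hand side is, up to a universal constant, $\EE|Y_{j_0}|\leq \EE\sup_j|Y_j|$. Adding this to the previous display gives \eqref{3} with $K$ of order $K_1 K_2$. The real obstacle is not the assembly, which is routine, but the appeal to the substantive Gaussian lower bound $\gamma_2(J,d) \lesssim \EE\sup_j|Y_j-Y_{j_0}|$ — the deep content of Talagrand's theorem from \cite{Ta}; once that is accepted as a black box and paired with the classical chaining upper bound, the lemma follows in a few lines as above.
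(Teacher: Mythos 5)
Your proof is correct and is essentially the paper's own argument spelled out: the paper cites Talagrand's comparison theorem (Theorem 15 of \cite{Ta}) directly, and you simply unpack that comparison into its two halves — the generic chaining upper bound for a process with $\psi_2$-controlled increments, and the majorizing measure lower bound $\gamma_2(J,d)\lesssim \EE\sup_j|Y_j-Y_{j_0}|$ for the associated Gaussian process — plus the routine base-point correction. The only small point left implicit is that absorbing the base-point term $\int|X_{j_0}|\,dm \lesssim (\sum_n|x_n(j_0)|^2)^{1/2}$ into a bound of the form $KC\,\EE\sup_j|Y_j|$ uses that $C$ is bounded below by a universal constant (which follows from orthonormality and the fact that $\|\cdot\|_{L_2}\lesssim\|\cdot\|_{\psi_2}$), and, as the paper notes, one may reduce to real-valued Gaussians via \eqref{c1}.
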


\begin{proof}
By \eqref{c1} we may assume $(g_n)$ are real Gaussians if we wish.
 In the tradition originating in Slepian's comparison Lemma,
  \eqref{3} is  immediate from 
    Theorem 15 in \cite{Ta}. This is a simple Corollary of the Fernique majorizing measure conjecture, proved by Talagrand in \cite{Ta}.  \end{proof}

The next result, originally due to Mireille L\'evy \cite{Lev} in the case $p=1$
is a consequence of the Hahn-Banach theorem.
We use the following notation: Let $u\ :L_p (\mathbb{P})\to L_p (m)$
be a linear operator. We say that $u$ is regular
if  $\exists C$ such that for any $N$ and any $x_1 ,\cdots,x_N \in L_p (\mathbb{P})$ we have
\begin{equation}\label{21}  \| \sup_n |u(x_n)| \|_p \le C \| \sup_n |x_n| \|_p .\end{equation}
More generally, we extend this definition to any
linear operator $u\ :E\to L_p (m)$ defined only on a subspace
$E\subset  L_p (\mathbb{P})$.
We denote by
$\|u\|_{reg}$ the smallest   constant $C$
for which \eqref{21} holds for any finite set
$x_1 ,\cdots,x_N \in E$. By a well known
property of $L_1$-spaces with respect to the projective tensor product
when $p=1$ we have $\|u\|_{reg}=\|u\| $. 

\begin{pro}[\cite{Lev,Pi3}]\label{p2}
Let $(\Omega,\mathbb{P})$ and $ (T,m)$ be arbitrary measure spaces.
Let $1\le p\le  \infty$. \\
 Let $\{g_n \}\subset L_p (\Omega,\mathbb{P}),\,\{\varphi_n \}\subset L_p (T,m)$ be arbitrary sets.  The following are equivalent, for a fixed constant $C$.
 \begin{itemize}
  \item[\rm (i)] For any $N$ and any $x_1 ,\cdots,x_N \in \ell_\infty$
   \begin{equation}\label{g2} \int\|\sum \varphi_n x_n \|^p_\infty dm\leq C^p\int\|\sum  g_n x_n \|^p_\infty d\mathbb{P}.\end{equation}
   \item[\rm (i)'] Same as (i) for any Banach space $B$ any $N$ and any $x_1 ,\cdots,x_N \in B$.
  \item[\rm (ii)] There is a regular operator $u\ :L_p (\mathbb{P})\to L_p (m)$ with $\| u\|_{reg}\leq C$ such that $u(g_n )=\varphi_n$.
 \end{itemize}
\end{pro}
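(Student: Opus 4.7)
The plan is to argue in the circular pattern (ii)$\Rightarrow$(i')$\Rightarrow$(i)$\Rightarrow$(ii). For (ii)$\Rightarrow$(i'), I would invoke the standard characterization of the regular norm: $\|u\|_{reg}\le C$ for an operator $u:L_p(\mathbb{P})\to L_p(m)$ is equivalent to $\|u\otimes I_B\|\le C$ as an operator $L_p(\mathbb{P};B)\to L_p(m;B)$ for every Banach space $B$. Applying this to $f=\sum_n g_n\otimes x_n\in L_p(\mathbb{P};B)$, whose image under $u\otimes I_B$ is $\sum_n\varphi_n\otimes x_n$, yields \eqref{g2} for arbitrary $B$, i.e.\ (i'). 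The implication (i')$\Rightarrow$(i) is merely the specialization $B=\ell_\infty$.

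For (i)$\Rightarrow$(ii), I would set $E=\mathrm{span}\{g_n\}\subset L_p(\mathbb{P})$ and define $u_0:E\to L_p(m)$ by $u_0(g_n)=\varphi_n$. The first step is to reinterpret (i) as the assertion that $\|u_0\|_{reg}\le C$ on the subspace $E$. Given any finite tuple $h_1,\dots,h_N\in E$ with $h_i=\sum_n c_n^{(i)} g_n$, I would take $x_n=(c_n^{(i)})_{i\le N}\in\ell_\infty^N$ in \eqref{g2}. Since $\|\sum_n g_n(\omega)x_n\|_\infty=\sup_i|h_i(\omega)|$ and $\|\sum_n\varphi_n(t)x_n\|_\infty=\sup_i|u_0(h_i)(t)|$, this becomes
$$\Bigl\|\sup_{i\le N}|u_0(h_i)|\Bigr\|_{L_p(m)}\le C\Bigl\|\sup_{i\le N}|h_i|\Bigr\|_{L_p(\mathbb{P})},$$
which is exactly \eqref{21} for $u_0$ on $E$. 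The case $N=1$ shows in particular that $u_0$ is well-defined (independent of the representation $h=\sum c_n g_n$) and $C$-bounded.

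It then remains to extend $u_0$ from $E$ to a regular operator $u:L_p(\mathbb{P})\to L_p(m)$ with $\|u\|_{reg}\le C$; this is the Hahn--Banach extension step for regular operators, due to L\'evy \cite{Lev} for $p=1$ and to \cite{Pi3} in general. For $p=1$ the regular norm coincides with the operator norm (as $L_1(\mathbb{P})\otimes_\pi B=L_1(\mathbb{P};B)$ isometrically), so the extension reduces to an ordinary bounded-extension problem handled by the usual Hahn--Banach argument in the dual formulation. For general $1<p<\infty$, I would extend $u_0$ one direction at a time: for $f_0\in L_p(\mathbb{P})\setminus E$, the admissibility of a value $u(f_0)\in L_p(m)$ reduces, via Hahn--Banach applied in the dual $L_{p'}(m;\ell_1^N)$, to a finite-dimensional consistency inequality that is a direct reformulation of (i). A Zorn's lemma / weak-$*$ compactness argument would then produce $u$ on all of $L_p(\mathbb{P})$.

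The hardest part will be this final extension: the regular norm is not an ordinary operator norm but a supremum of vector-valued operator norms over all $N$, so the Hahn--Banach step must be performed coherently in $N$. This is resolved either by working directly on the single Banach-lattice space $L_p(\mathbb{P};\ell_\infty)$ and invoking a lattice version of Hahn--Banach, or by a weak-$*$ compactness argument on the (convex) set of partial extensions preserving \eqref{21}.
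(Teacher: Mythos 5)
The paper does not actually prove Proposition~\ref{p2}; it cites \cite{Lev,Pi3} and records only the one-line observation that (i)$\Leftrightarrow$(i)' because every separable Banach space embeds isometrically in $\ell_\infty$. Measured against that, your circular route (ii)$\Rightarrow$(i')$\Rightarrow$(i)$\Rightarrow$(ii) is sensible, and your key reformulation inside (i)$\Rightarrow$(ii) --- that \eqref{g2} applied to $x_n=(c_n^{(i)})_{i\le N}\in\ell_\infty^N$ is precisely the statement $\|u_0\|_{reg}\le C$ for $u_0$ on $E=\mathrm{span}\{g_n\}$ --- is exactly right.

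The sketch of the extension step for $p=1$ is where a genuine gap appears. The claim ``the regular norm coincides with the operator norm, so the extension reduces to an ordinary bounded-extension problem'' is not warranted. The identity $\|u\|_{reg}=\|u\|$ holds for $u$ defined on \emph{all} of $L_1(\P)$, because $L_1(\P)\otimes_\pi B=L_1(\P;B)$ isometrically and tensoring with $I_B$ therefore preserves the operator norm. When $u_0$ is defined only on $E\subsetneq L_1(\P)$, the norm entering \eqref{21} is the one induced on $E\otimes\ell_\infty^N$ from $L_1(\P;\ell_\infty^N)$, which is in general smaller than the projective norm on $E\otimes_\pi\ell_\infty^N$; the tensorization argument therefore does \emph{not} give $\|u_0\|_{reg}\le\|u_0\|$, and the regularity hypothesis cannot be discarded. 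Moreover, even if the two norms did coincide, ``ordinary bounded extension'' from a subspace into $L_1(m)$ is unavailable in general ($L_1(m)$ is not injective). The correct $p=1$ mechanism --- the one the paper itself carries out for the closely related Proposition~\ref{rs1} --- is: use (i) to produce a \emph{scalar} linear form of norm $\le C$ on a subspace of $L_1(\P;L_\infty(m))=L_1(\P)\otimes_\pi L_\infty(m)$; extend it by Hahn--Banach; identify the extension with an operator $L_1(\P)\to L_\infty(m)^*$; and then compose with the norm-one band (Hahn-decomposition) projection $L_\infty(m)^*\to L_1(m)$. For $1<p<\infty$ the ``extend one vector at a time while preserving \eqref{21}'' plan is likewise not obviously sound, since the constraint is a supremum over all $N$ simultaneously rather than a single operator-norm bound; the argument of \cite{Pi3} runs through complex interpolation of regular operators and is not a routine Zorn's-lemma extension.
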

 
 Note that (i) $\Leftrightarrow$ (i)' holds because any separable
 Banach space embeds isometrically in $\ell_\infty$.
 
  It is worthwhile to observe that the assumption \eqref{1}
 can be replaced in Theorem \ref{t1} by the following:
 \begin{equation}\label{1nb} \text{There are a constant } C 
 \text{ and   }  u:\ L_1(\P)\to L_1(m)   \text{ with }
 \|u\|\le C \text{ such  that }\\
 \forall n \ \ u(g_n)=\varphi_n .
 \end{equation}
 
 \begin{dfn}\label{dom} 
 When \eqref{1nb} holds we will say
 that $(\varphi_n)$ is $C$-dominated by $(g_n)$.
 \end{dfn}
 Combining Proposition \ref{p2}   with Talagrand's result, we find
  \begin{thm}\label{p8}
There is a numerical constant $\tau_0$ such that 
any $C$-subGaussian sequence $\{\varphi_n\}\subset L_1(T,m)$
is $\tau_0C$-dominated by $(g_n)$. 
\end{thm}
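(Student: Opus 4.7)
The plan is to invoke, in sequence, the two ingredients already proved: Talagrand's Lemma \ref{lem1} and L\'evy's duality Proposition \ref{p2} (specialized to $p=1$).

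First, suppose $(\varphi_n)\subset L_1(T,m)$ satisfies the $C$-subGaussian hypothesis \eqref{1}. By Lemma \ref{lem1}, there is a universal constant $K$ such that for every $N$ and every $x_1,\dots,x_N\in\ell_\infty$,
\begin{equation*}
 \int\Bigl\|\sum \varphi_n(t) x_n\Bigr\|_\infty dm(t) \le KC \int\Bigl\|\sum g_n x_n\Bigr\|_\infty d\P.
\end{equation*}
This is precisely condition (i) of Proposition \ref{p2} in the case $p=1$, with constant $KC$ in place of $C$.

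Next, I invoke the equivalence (i) $\Leftrightarrow$ (ii) of Proposition \ref{p2} (with $p=1$). It produces a regular operator $u\colon L_1(\P)\to L_1(m)$ with $\|u\|_{\mathrm{reg}}\le KC$ such that $u(g_n)=\varphi_n$ for all $n$. Finally, I appeal to the remark stated just before Proposition \ref{p2}: by the projective tensor product description of $L_1$-spaces, the regular norm coincides with the ordinary operator norm when $p=1$, that is, $\|u\|_{\mathrm{reg}}=\|u\|$. Hence $\|u\|\le KC$, and Definition \ref{dom} is satisfied with the constant $\tau_0 C$ where $\tau_0=K$.

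There is no genuine obstacle: the statement is essentially a synthesis of Lemma \ref{lem1} (the substantive input, which already packages Talagrand's majorizing measure theorem) with the soft duality argument behind Proposition \ref{p2}. The only point worth flagging is the identification $\|u\|_{\mathrm{reg}}=\|u\|$ for $L_1$-valued operators, which is what allows one to conclude that the operator produced by the Hahn--Banach argument in Proposition \ref{p2} is bounded on all of $L_1(\P)$ with the expected norm, and not merely regular.
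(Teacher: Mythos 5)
Your proof is correct and follows exactly the route the paper intends: the paper states Theorem \ref{p8} as the immediate consequence of ``[c]ombining Proposition \ref{p2} with Talagrand's result,'' which is precisely your chain Lemma \ref{lem1} $\Rightarrow$ (i) of Proposition \ref{p2} with $p=1$ $\Rightarrow$ regular operator $u$ with $u(g_n)=\varphi_n$, together with the observation $\|u\|_{\mathrm{reg}}=\|u\|$ for $p=1$. Nothing is missing, and $\tau_0=K$ (the constant from Talagrand's lemma) is the right identification.
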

\begin{rem} It would be interesting to find the best constant $\tau_0$.
We suspect there may be an explicit formula for the kernel of the operator
  $u$ by which a subGaussian system (say assumed satisfying \eqref{sub0} with $\sigma=1$)
  is dominated by $(g_n)$. 
  \end{rem}
  \begin{rem}[Comparing $L_{\psi_2}$ and $L_p$]\label{lamp} Let $f$ be a random variable on $(T,m)$.
  Recall that $f\in L_{\psi_2}$ (i.e. $\|f\|_{\psi_2}<\infty$)  
  iff $f\in \cap_{p<\infty}L_p$  and $\sup_{p<\infty} p^{-1/2}\|f\|_p<\infty$.
  Moreover, $f\mapsto \sup_{2\le p<\infty} p^{-1/2}\|f\|_p$ is a norm equivalent
  to the norm \eqref{orl} on $L_{\psi_2}$. These elementary and well known facts are proved using Stirling's formula and the Taylor expansion of
  the exponential function.
   \\
  Therefore, a system $(\varphi_n)$ is subGaussian (i.e. satisfies \eqref{1})
  iff there is a constant $L$ such that for any $y\in \ell_2$
  and any $2\le p<\infty$
  \begin{equation}\label{111}\|\sum y_n \varphi_n\|_p \le L p^{1/2} (\sum |y_n|^2)^{1/2}.\end{equation}
  Moreover the smallest $C$ in \eqref{1}
  and the smallest $L$ in \eqref{111}
  are equivalent quantities, up to numerical factors. \\
  In particular, a sequence of characters on a compact Abelian group
  is subGaussian iff it is a $\Lambda(p)$-set (in Rudin's sense \cite{Ru})
 for all $ 2< p<\infty$  with $\Lambda(p)$-constant   $O(p^{1/2} )$.
 See Bourgain's survey for more information on $\Lambda(p)$-sets.
  \end{rem}

   We will prove a 
 more ``abstract" form of Theorem \ref{t1}.
 Note that when dealing with
 a tensor $T\in L_1(m_1)\otimes L_1(m_2)$ 
 say $T=\sum x_j\otimes y_j$
 then the projective and injective tensor product
 norm denoted respectively by $\|\cdot\|_\wedge$ and $\|\cdot\|_\vee$
 are very simply explicitly described by
 $$\|T\|_\wedge=\int |\sum x_j (t_1) y_j (t_2)|dm_1(t_1)dm_2(t_2)$$
 $$\|T\|_\vee=\sup \{|\sum \langle x_j ,\psi_1\rangle   \langle y_j ,\psi_2\rangle |\mid \|\psi_1\|_\infty \le 1,
 \|\psi_2\|_\infty\}.$$

 \begin{thm}\label{nt1} Let $(T_1,m_1),(T_2,m_2)$
 be two probability spaces.
 Let $(g_n)$ be an i.i.d. sequence of complex Gaussian random
 variables as above.
  For any $0<\d<1$ there is 
  $  w(\d)>0$ for which the following property holds.
  Let $(\varphi^1_n)$ and  $(\varphi^2_n)$ ($1\le n\le N$) be functions
  respectively in $L_1(m_1)$ and $ L_1(m_2)$. 
  Assume that $(\varphi^1_n)$ and  $(\varphi^2_n)$  are both
  $1$-dominated by $(g_n)$ (i.e. they  satisfy 
 \eqref{1nb} or equivalently \eqref{g2} 
  with   $p=C=1$).
  Then there is a decomposition in $L_1(m_1)\otimes L_1(m_2)$
  of the form
    \begin{equation}\label{n0}\sum\nolimits_1^N \varphi^1_n \otimes \varphi^2_n=t+r\end{equation}
 satisfying
  \begin{equation}\label{n1}\|t\|_{\wedge}\le w(\d)\end{equation}
      \begin{equation}\label{n2}\|r\|_{\vee}\le \d.\end{equation}
      Moreover the same result
 holds with  $(g^{\RR}_n)$ in place of $(g_n)$.
 \end{thm}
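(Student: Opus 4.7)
The plan is to decompose the tensor via the operator $T_\delta$ furnished by Lemma \ref{lem0}, exploiting the fact that $T_\delta$ actually comes from an $L_1(\mathbb{P}\otimes\mathbb{P})$-kernel. First I would fix the two domination operators $u_i:L_1(\mathbb{P}) \to L_1(m_i)$, with $\|u_i\|\le 1$ and $u_i(g_n)=\varphi_n^i$, and let $P_1$ denote the orthogonal projection onto $\mathrm{span}(g_1,\dots,g_N)$ in $L_2(\mathbb{P})$. From the explicit Mehler-type construction in the proof of Lemma \ref{lem0} I would read off that $T_\delta$ admits a kernel $k_\delta \in L_1(\mathbb{P}\otimes\mathbb{P})$ with $\|k_\delta\|_1 \le w_0(\delta)$. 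In the real case this is essentially immediate: $T(\delta)$ and $P_0$ are positive with $T(\delta)1 = P_0 1 = 1$, so their positive kernels each have $L_1(\mathbb{P}\otimes\mathbb{P})$-norm equal to $1$, whence the kernel of $T_\delta = \delta^{-1}(T(\delta) - P_0)$ has $L_1$-norm at most $2/\delta$. In the complex case $T_\delta = WS_\delta$, and this bound survives because $W = \int \bar z V_z\,dz$ is an average of measure-preserving rotations.

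Next I would set $t := (u_1 \otimes u_2)(k_\delta)$ inside $L_1(m_1 \otimes m_2)$, using the canonical isometric identification $L_1(m_1)\otimes L_1(m_2) \simeq L_1(m_1 \otimes m_2)$ for the projective tensor norm; since $\|u_1\|\|u_2\|\le 1$, this gives $\|t\|_\wedge \le \|k_\delta\|_1 \le w_0(\delta)$ at once, settling \eqref{n1}. Setting $r := \sum_n \varphi_n^1\otimes\varphi_n^2 - t$, I would verify \eqref{n2} at the level of bilinear forms on $L_\infty(m_1)\times L_\infty(m_2)$. For $\|\psi_i\|_\infty\le 1$, let $a=u_1^*\psi_1$, $b=u_2^*\psi_2$, both in the unit ball of $L_\infty(\mathbb{P})$. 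A direct kernel computation shows that the pairing of $t$ with $\psi_1\otimes\psi_2$ equals $\langle a, T_\delta b\rangle_{L_2(\mathbb{P})}$, whereas the pairing of $\sum_n\varphi_n^1\otimes\varphi_n^2$ equals $\sum_n \hat a(n)\hat b(n) = \langle a, P_1 b\rangle_{L_2(\mathbb{P})}$. Since $T_\delta g_n = g_n$ forces $T_\delta P_1 = P_1$, the difference equals $\langle a, T_\delta(I-P_1)b\rangle$, which is bounded by $\delta\|a\|_2\|b\|_2 \le \delta$ via Cauchy--Schwarz and the third clause of Lemma \ref{lem0}. Hence $\|r\|_\vee \le \delta$.

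The main delicate point will be justifying the $L_1$-kernel representation of $T_\delta$: a generic bounded operator on $L_1(\mathbb{P})$ need not arise from an integral kernel, so the specific Mehler-based construction in Lemma \ref{lem0} is essential here. A minor secondary point is the harmless reduction to $\Omega=\mathbb{R}^N$ (or $\mathbb{R}^{2N}$) equipped with Gaussian measure, which can be arranged by composing with the conditional expectation onto $\sigma(g_1,\dots,g_N)$ without affecting any of the constants.
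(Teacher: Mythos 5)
Your proof is correct and follows essentially the same path as the paper's; in fact Remark \ref{melo} in the paper phrases the argument at the operator level in a way that makes the identity with your approach explicit, with your $t=(u_1\otimes u_2)(k_\delta)$ corresponding to $u_1T_\delta J u_2^*$ and your $r$ corresponding to $-u_1J_1T_\delta(I-P_1)J_2u_2^*$. The only cosmetic difference is that the paper first performs a finite-rank reduction so that $t$ and $r$ lie in the algebraic tensor product $L_1(m_1)\otimes L_1(m_2)$, whereas your $t$ naturally lives in the completion $L_1(m_1\times m_2)$, which is immaterial for the intended applications.
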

 \begin{cor}\label{cor1} In the situation of the theorem, for any
 matrix $a\in M_N$ with $\|a\|_{M_N}\le 1$, there is a decomposition
 $$\sum\nolimits_{1\le i,j\le N} a_{ij} \varphi^1_i \otimes \varphi^2_j=t+r$$
 such that \eqref{n1} and \eqref{n2} hold.
  \end{cor}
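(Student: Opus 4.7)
The plan is to reduce Corollary \ref{cor1} to Theorem \ref{nt1} by absorbing the contraction $a$ into one of the two sequences. Setting $\tilde\varphi^2_i=\sum_{j=1}^N a_{ij}\varphi^2_j$, the double sum rearranges as
$$\sum_{1\le i,j\le N}a_{ij}\,\varphi^1_i\otimes\varphi^2_j=\sum_{i=1}^N \varphi^1_i\otimes \tilde\varphi^2_i,$$
so the whole task reduces to checking that the new sequence $(\tilde\varphi^2_i)$ is still $1$-dominated by $(g_n)$. Once that is in hand, a direct application of Theorem \ref{nt1} to the pair $(\varphi^1_i),(\tilde\varphi^2_i)$ yields a decomposition of the desired form with the same bounds \eqref{n1} and \eqref{n2}.

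To verify the domination, I would observe that the inequality \eqref{c2} is precisely condition (i)' of Proposition \ref{p2} at $p=1$ with constant $C=1$, applied to the source sequence $(g_i)$ and the target sequence $\bigl(\sum_j a_{ij} g_j\bigr)$, both regarded as elements of $L_1(\P)$. Since on $L_1$-spaces regularity coincides with ordinary boundedness, part (ii) of the proposition then furnishes an operator $T_a:L_1(\P)\to L_1(\P)$ with $\|T_a\|\le 1$ and $T_a(g_i)=\sum_j a_{ij} g_j$ for every $i$. By hypothesis, there is also a dominating operator $u_2:L_1(\P)\to L_1(m_2)$ with $\|u_2\|\le 1$ and $u_2(g_n)=\varphi^2_n$. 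The composition $u_2\circ T_a:L_1(\P)\to L_1(m_2)$ is then a contraction sending $g_i$ to $\tilde\varphi^2_i$, which is exactly the $1$-domination of $(\tilde\varphi^2_i)$ by $(g_n)$.

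The only point that requires genuine attention is this intermediate step: one must recognize that \eqref{c2}, combined with Proposition \ref{p2}, upgrades a pointwise $M_N$-action on the Gaussians into a genuine bounded linear operator on $L_1(\P)$ which can then be composed with pre-existing dominating operators. Everything else is formal: the rearrangement of the sum in step~1 and the invocation of Theorem \ref{nt1} in step~3. No new analytic ingredient beyond \eqref{c2} (which is itself an extreme-point argument from unitary invariance of the complex Gaussian distribution) is needed, and the same argument works verbatim with the real Gaussians $(g^\R_n)$ in place of $(g_n)$, since \eqref{c2} and Proposition \ref{p2} apply to both.
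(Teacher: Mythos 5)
Your proposal is correct and matches the paper's argument exactly: the paper's one-line proof ("by \eqref{c2} we can replace $(\varphi^2_i)$ by the rotated sequence $(\sum_j a_{ij}\varphi^2_j)$, which still satisfies \eqref{1nb}") is precisely your three steps of rearranging the sum, invoking \eqref{c2} via Proposition~\ref{p2} to build the contraction $T_a$ on $L_1(\P)$, and composing $u_2\circ T_a$. You have simply made explicit the domination argument that the paper leaves implicit.
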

  \begin{proof} We simply observe that by \eqref{c2}  we can replace
  $(\varphi^2_i)_{1\le i\le N}$ by the ``rotated" sequence
  $ (\sum\nolimits_j  a_{ij} \varphi^2_j)_{1\le i\le N}$, which still satisfies \eqref{1nb}.
    \end{proof}
    The assumption that $(\varphi_n)$ is orthonormal in Theorem \ref{t1}
will now  be weakened.
It suffices to assume given a system $(\psi_n)$ in $L_\infty$  that  is biorthogonal
to $(\varphi_n)$, in the sense
that $\int \varphi_n \psi_k dm=\d_{nk}$.
    The advantage of this formulation   is that it includes the Gaussian case,
 e.g. with $\varphi_n=g^{\RR}_n \|g^{\RR}_n\|_1^{-1}$ and $\psi_n={\rm sign}(g^{\RR}_n)$
 (or the complex analogue).
     \begin{cor}\label{cbl} In the situation of   Theorem \ref{nt1},
   let  $(\psi^1_n),(\psi^2_n)$  be systems biorthogonal respectively
to $(\varphi^1_n),(\varphi^2_n)$   and uniformly bounded 
respectively by $C'_1,C'_2$,
then  
\begin{equation}\label{51}
  \sum |a_n |\leq\alpha \sup_{(t_1,t_2)\in T\times T}|\sum  a_n \psi^1_n (t_1 )\psi^2_n (t_2 )|,
 \end{equation} 
 where   $\alpha$ is a constant depending only on $C'_1$ and $C'_2$.\\
 In particular, for any  
 uniformly bounded orthonormal system $(\varphi_n)$,
 subGaussian implies $\otimes^2$-Sidon.
   \end{cor}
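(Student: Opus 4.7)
The plan is to deduce \eqref{51} from Corollary \ref{cor1} by a duality argument built on the biorthogonality relations $\int \varphi^i_n \psi^i_k dm_i = \delta_{nk}$. Fix a finitely supported sequence $(a_n)$ and set
$$F(t_1,t_2) = \sum_n a_n \psi^1_n(t_1)\psi^2_n(t_2).$$
Choose unimodular scalars $b_n$ so that $a_n b_n = |a_n|$. Applying biorthogonality in each variable,
$$\sum_n |a_n| = \int_{T_1\times T_2} F(t_1,t_2) \Bigl(\sum_n b_n \varphi^1_n(t_1)\varphi^2_n(t_2)\Bigr) dm_1(t_1)\,dm_2(t_2).$$
Now apply Corollary \ref{cor1} with the diagonal matrix $a = \mathrm{diag}(b_n)$ (of operator norm $1$) and a parameter $0<\delta<1$ to be chosen: this produces a decomposition
$$\sum_n b_n\, \varphi^1_n\otimes \varphi^2_n = t + r \qquad \text{with }\ \|t\|_\wedge \le w(\delta),\ \|r\|_\vee \le \delta.$$

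For the $t$-piece I would invoke the isometric identification $L_1(m_1)\hat\otimes L_1(m_2) = L_1(m_1\times m_2)$, so that $\|t\|_\wedge = \|t\|_{L_1(m_1\times m_2)}$ and
$$\Bigl|\int F\, t\, dm_1 dm_2\Bigr| \le \|F\|_\infty \|t\|_\wedge \le w(\delta)\|F\|_\infty.$$
For the $r$-piece the key point is not to try to estimate $\|F\|_\vee$ (which could be as bad as $\sum |a_n|$) but instead to pair $r$ against the explicit tensor expansion of $F$ one summand at a time:
$$\Bigl|\int F\, r\, dm_1 dm_2\Bigr| \le \sum_n |a_n|\, \bigl|\langle r,\psi^1_n\otimes \psi^2_n\rangle\bigr| \le C'_1 C'_2\, \delta \sum_n |a_n|,$$
since each $\|\psi^i_n\|_\infty \le C'_i$. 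Combining the two estimates gives
$$\sum_n |a_n| \le w(\delta)\|F\|_\infty + C'_1 C'_2\, \delta \sum_n |a_n|.$$
Choosing $\delta = 1/(2 C'_1 C'_2)$ absorbs the last term and yields \eqref{51} with $\alpha = 2\, w\bigl(1/(2 C'_1 C'_2)\bigr)$.

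The ``In particular'' assertion follows by taking $\varphi^1_n=\varphi^2_n=\varphi_n$ and $\psi^i_n=\overline{\varphi_n}$, so that biorthogonality reduces to orthonormality and $\|\psi^i_n\|_\infty \le C'$. By Theorem \ref{p8}, a $C$-subGaussian orthonormal system is $\tau_0 C$-dominated by $(g_n)$; a harmless rescaling (dividing each $\varphi_n$ by $\tau_0 C$, accompanied by multiplying each $\psi^i_n$ by $\tau_0 C$) puts us in the $1$-dominated hypothesis of Theorem \ref{nt1} at the cost of modifying $\alpha$ by the factor $(\tau_0 C)^2$. Finally, since $|\sum \bar a_n\, \overline{\varphi_n(t_1)}\,\overline{\varphi_n(t_2)}| = |\sum a_n \varphi_n(t_1)\varphi_n(t_2)|$, \eqref{51} gives precisely the $\otimes^2$-Sidon estimate \eqref{2}. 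The only delicate point is the bookkeeping between the two tensor norms $\|\cdot\|_\wedge$ and $\|\cdot\|_\vee$ and the observation that, for the injective-norm term, one must exploit the concrete tensor expansion of $F$ rather than its full injective norm; once that is done, the absorption argument is routine.
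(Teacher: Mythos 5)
Your proof is correct and follows essentially the same route as the paper's: choose unimodular scalars to linearize $\sum|a_n|$, apply Corollary \ref{cor1} (with the diagonal unitary matrix) to decompose $\sum b_n\varphi^1_n\otimes\varphi^2_n = t+r$, pair $t$ against $\|F\|_\infty$ via the projective norm and pair $r$ term-by-term against the $\psi^1_n\otimes\psi^2_n$ via the injective norm, then absorb. The only cosmetic difference is that you make explicit the diagonal-matrix invocation of Corollary \ref{cor1} and the complex-conjugation step in the ``in particular'' clause, both of which the paper leaves implicit.
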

  
      \begin{proof}
      Let $\vp_n\in \T$ be such that $|a_n|=\vp_n a_n$.
      By Corollary \ref{cor1} we have a decomposition
      $\sum \vp_n  \varphi^1_n \otimes \varphi^2_n=t+r$.
      Let $f(t_1 ,t_2)= \sum a_n \psi^1_n (t_1 )\psi^2_n (t_2 )$.
      We have
      $$\langle t+r,f\rangle=\int (\sum  \vp_n \varphi^1_n \otimes \varphi^2_n) (f)=\sum \vp_n a_n=
      \sum |a_n| .$$
      Therefore
      $$\sum |a_n| \le |\langle t,f\rangle| +  |\langle r,f\rangle|
      \le w(\d) \|f\|_\infty + \sum |a_n| |\langle r,\psi^1_n\otimes \psi^2_n\rangle|
      \le  w(\d) \|f\|_\infty +\d C_1'C_2'  \sum |a_n| .$$
Choosing $\d$ such that $\d C_1'C_2' =1/2$
we obtain the conclusion with $\alpha= 2 w(\d) $.\\
For the last part of Corollary \ref{cbl} recall that by Theorem \ref{p8} subGaussian
implies domination by $(g_n)$. Thus the last assertion
is obtained by taking (up to a renormalization, as in Remark \ref{70})
$\varphi^1_n=\varphi^2_n =\varphi_n$
and $\psi^1_n=\psi^2_n=\ovl{\varphi_n}$.
       \end{proof}
         \begin{rem} Actually the preceding proof requires
         only that $(\psi^1_n \otimes\psi^2_n)$
         be biorthogonal to $(\varphi^1_n \otimes \varphi^2_n)$.
         For instance, it suffices to have 
         $(\psi^1_n)$ biorthogonal to $(\varphi^1_n)$
         and to have $\int  \varphi^2_n\psi^2_n=1$ for all $n$.            \end{rem}
        \begin{rem} It is worthwhile to observe that
        Theorem \ref{nt1} actually reduces to the case
        when $\varphi^1_n=\varphi^2_n=g_n$. Indeed,
        once  we have obtained a decomposition
       $ \sum\nolimits_1^N g_n \otimes g_n=t+r$,
      we simply let
     $$ \sum\nolimits_1^N \varphi^1_n \otimes \varphi^2_n=(u_1\otimes u_2)(t)+(u_1\otimes u_2)(r).$$ 
        However, it turns out that the proof below
        is essentially the same in the  case $\varphi^1_n=\varphi^2_n=g_n$
        as in the general case, so we proceed without using the present observation.
        \end{rem}
   \begin{proof}[Proof of Theorem \ref{nt1}]
   By   \eqref{c1}  it suffices to prove this
   when the $(g_n)$'s are real valued.
   This is arguably irrelevant but it simplifies
   notation, allowing us to avoid complex conjugation.  
  Let
 $u_j:\ L_1(\P) \to L_1(m_j)$ with $\|u_j\|\le 1$
 such that $u_j(g^{\RR}_n)=\varphi^j_n$ for $1\le n\le N$.
 By classical results on $L_1$-spaces,  for any $\vp>0$ there is a finite rank projection $Q_j$ on $L_1(m_j)$ with $\|Q_j\|<1+\vp$ that is the identity on {\rm span}$[\varphi^j_1,\cdots,\varphi^j_N]$.
 Thus replacing $u_j$ by $Q_ju_j$ we may clearly assume
 that $u_j$ has finite rank. Thus each $u_j$
 can be identified with
 an element $\Phi^j\in L_\infty(\P)\otimes L_1(m_j)$.
 
 It is easy and well known  that, since $u_j$ has finite rank
  $$ \|\Phi^j\|_{L_\infty(\P;L_1(m_j) )}= \|u_j\|.$$
Indeed, if  $k$ is the rank of $u_j$, 
this is clear when $\Phi^j=\sum\nolimits_1^k x_q \otimes y_q$ 
with   $x_q$,$y_q$ measurable with respect to a finite
$\sigma$-subalgebra.
The general case can then be checked by a simple approximation argument by step functions.

 A fortiori $\Phi^j\in L_2(\P)\otimes L_1(m_j)$.  
 
 The fact that $\Phi^j$  represents $u_j$ is expressed
  for all $\psi^j\in  L_\infty(m_j)$
  by \begin{equation}\label{e8}u^*_j(\psi^j)= \int \Phi^j \psi^j dm_j. \end{equation}
   Let  
       \begin{equation}\label{e3}
       T=\int  \Phi^1(\omega)\otimes \Phi^2(\omega)
      d\P(\omega).\end{equation}
      Note that
    \begin{equation}\label{e5}
    \| T\|_{\wedge}=
  \| \int  \Phi^1(\omega)\otimes \Phi^2(\omega) d\P(\omega)\|_{\wedge} \le \|    \Phi^1\|_{L_\infty(\P; L_1(m_1))}  \|    \Phi^2\|_{L_\infty(\P; L_1(m_2))}
    \le \|u_1\| \|u_2\|.  \end{equation}
  We claim that
    \begin{equation}\label{e5bis}
    \| T\|_{\vee}\le \|u_1:\ L_2\to L_1\| \|u_2 :\ L_2\to L_1\|
       \le \|u_1\| \|u_2\|.  \end{equation}
       Indeed, 
       by \eqref{e8} we have
          \begin{equation}\label{f5bis}\langle T, \psi^1\otimes \psi^2\rangle=\int 
       u^*_1(\psi^1)(\omega) u^*_2(\psi^2)(\omega)  d\P(\omega)\end{equation}
       and hence
       $$|\langle T, \psi^1\otimes \psi^2\rangle| \le
       \|u^*_1(\psi^1)\|_2 \|u^*_2(\psi^2)\|_2 
        \le  \|u^*_1:\ L_\infty\to L_2\| 
        \|\psi^1\|_{L_\infty(m_1)} \|u^*_2:\ L_\infty\to L_2\|        \|\psi^2\|_{L_\infty(m_2)},
       $$
       from which \eqref{e5bis} follows.
 
       We
 will now use
 the orthogonal projection $P_1:\ L_2(\P)\to {\rm span}[g^{\RR}_n]$
 from Lemma \ref{lem0}.
 
     Let $S=\sum\nolimits_1^N \varphi^1_n \otimes \varphi^2_n$.
          We first claim that we have a decomposition (recall \eqref{e3})
$$S=   T+R$$
 such that  
 \begin{equation}\label{e5t} \| R\|_\vee \le \|u_1(I-P_1):\ L_2\to L_1\|\|u_2\|.\end{equation}
Note that since $S=(u_1\otimes u_2) (\sum g^{\RR}_n \otimes g^{\RR}_n)$ and $\sum g^{\RR}_n \otimes g^{\RR}_n$ represents $P_1$ we have
$$\langle S, \psi^1\otimes \psi^2\rangle=\int 
      \langle g^{\RR}_n, u^*_1(\psi^1)  \rangle \langle g^{\RR}_n, u^*_2(\psi^2)  \rangle d\P =  \int  P_1u^*_1(\psi^1) u^*_2(\psi^2)
      d\P,$$
      and hence by \eqref{f5bis}
      $$S= T+R$$
      where
      $$\langle R, \psi^1\otimes \psi^2\rangle=
    -   \int(I-  P_1)u^*_1(\psi^1) u^*_2(\psi^2).
    $$
Applying \eqref{e5bis} to $R$ we find 
\eqref{e5t} which proves our claim.
 
We will now use the ``interpolation property"
from Lemma \ref{lem0}.

Fix $0<\d<1$.
We will now replace $u_1$ by $u_1T_\d$.
Note that we still have  $u_1T_\d(g^{\RR}_n)=u_1 (g^{\RR}_n)=\varphi^1_n$
but in addition we now have
$$\|u_1T_\d\| \|u_2\|\le w_0(\d) \|u_1\|\|u_2\|\text{  and  } \|u_1T_\d(I-P_1):\ L_2\to L_1\| \le \d \|u_1\|.$$
Therefore, the preceding decomposition
becomes $S=t+r$
with $\|t\|_{\wedge} \le w_0(\d) \|u_1\|\|u_2\|\le w_0(\d)$
and $\|r\|_{\vee} \le \d\|u_1\|\|u_2\|\le w_0(\d) $.
The minor correction to pass from the real case to the complex one
leads to doubled constants $ w_0(\d), \d$.
   \end{proof}
    \begin{rem}\label{melo}
 Let $J:\ L_\infty(\P)\to L_1(\P)$, $J_2:\ L_\infty(\P)\to L_2(\P)$
 and   $J_1:\ L_2(\P)\to L_1(\P)$ be the natural inclusions,
 so that $J=J_1J_2$.
 A more abstract way to run the previous proof
 is to observe that the tensor $S$ corresponds to the operator
 $u_1J_1 P_1 J_2 u_2^*:\ L_\infty(m_2)\to L_1(m_1)$, which can be decomposed
 as $$u_1J_1 P_1 J_2 u_2^*=u_1J_1 T_\d J_2 u_2^* - u_1J_1T_\d (1-P_1)J_2u_2^*= u_1T_\d J  u_2^*- u_1J_1T_\d (1-P_1)J_2u_2^*.$$
 This is the operator version of the decomposition
 $S=t+r$.
 Then,  $\|t\|_\wedge$ is equal to the integral norm of $u_1T_\d J  u_2^*$
 which is clearly (since $J$ appears inside) $\le 
  \|u_1 T_\d\| \|u_2^*\|\le w_0(\d)
  \|u_1\|  \|u_2\|$
  and $\|r\|_\vee $ is 
$  \le 
\|u_1J_1T_\d (1-P_1)\| \|J_2u_2^*\|\le \d \|u_1J_1\| \|J_2u_2^*\|\le \d
\|u_1\|  \|u_2\|$.

   \end{rem}
 \begin{rem}\label{mel}
 It is known 
 that the best estimate for $w_0(\d)$ with properties
 \eqref{e7} is $w_0(\d)=O(\log(1/\d))$.
This follows easily   from 
a result proved already in the Sidon set context by J.F. M\'ela, namely
 Lemma 3 in  \cite{Me}.  
The latter says that for any $0<\d<1$
 there is a measure $\sigma$ on $[0,1]$
such that $\int s d\sigma(s) =1$,
  $|\int s^n d\sigma(s)|\le \d  $ for all odd $n$
  and $|\sigma|([0,1])\le C |\log\d|$ with $C$ independent of $\d$. 
 Now let  $T(s)$ be the operator defined in \eqref{ou}.
We have    $T(s)=\sum\n_{n\ge 0} s^n P_n$
  where the $P_n$'s are the orthogonal projections onto
  the span  (or the ``chaos") of Hermite polynomials of degree $n$.  Consider then
  $$T_\d=\int ( T(s)-T(-s))/2 \ d\sigma(s)=P_1+\sum\n_{n \text{ odd}>1} P_n \int s^n d\sigma(s).$$
  It is easy to check \eqref{e7} with $w_0(\d)\le C |\log\d|$.
  Of course this implies
 the same growth for $w(\d)$.
 
 An alternate proof can be given using complex interpolation by the same idea
 as in  \cite[p. 11]{Piexp}.     \end{rem}
   
    \begin{rem}[$\otimes^k$ implies $\otimes^{k+1}$]\label{mel2} Let $(\psi^1_n)$ be a Sidon sequence.
    Then for any sequence $(\psi^2_n)$ such that
    $\delta=\inf\|\psi^2_n\|_1>0$, the sequence
    $(\psi^1_n\otimes \psi^2_n)$ is Sidon. Indeed, for any fixed $s$
    we have
      $\sum |a_n \psi^2_n(s)| \le \alpha \|\sum a_n  \psi^1_n \psi^2_n(s)  \|_\infty$,
     which, after integration over $s$, implies 
      $\delta\sum |a_n | \le \alpha \|\sum a_n \psi^1_n\otimes \psi^2_n   \|_\infty$.
      In particular,  for a uniformly bounded othonormal system,
      $\otimes^k$-Sidon implies $\otimes^{k+1}$-Sidon.
       \end{rem}
       
       \begin{rem}[On homogeneity]\label{70}
 Assume that 
 $(\varphi_n)$ is biorthogonal to 
 $(\psi_n)$ and $\|\psi_n\|_{\infty  }\le C'$. Let $c>0$.
 If $(\varphi_n)$ is $c$-dominated by $(g_n)$,
 then $(c^{-1}\varphi_n)$ is $1$-dominated by $(g_n)$,
  is biorthogonal to  $(c\psi_n)$, and the latter
  has norm $\le cC'$ in ${L_\infty}$. Thus Corollary \ref{cbl}
  applies in this case too.
\end{rem}

\begin{cor}\label{69} 
Let $\{\psi_n\mid 1\le n\le N\} \subset L_\infty(m)$
and $\{\varphi_n\mid 1\le n\le N\} \subset L_1(m)$.
Let $a_{ij}=\langle \varphi_i, \psi_j\rangle$.
Assume that $a=[a_{ij}]$ is invertible and   $\|a^{-1}\|_{M_N}\le c$.
Then if $(\varphi_n)$ is $1$-dominated by $(g_n)$,
and if   
$ \sup\n_n\| \psi_n\|_\infty\le C'$
there is a number $\alpha=\alpha(c,C')$
(depending only on $c$ and $C'$)  
such that
$(\psi_n)$ is $\otimes^2$-Sidon with 
constant $\alpha$.  
\end{cor}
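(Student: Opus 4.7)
The plan is to reduce Corollary \ref{69} to Corollary \ref{cbl} by constructing, from $(\varphi_n)$, a system biorthogonal to $(\psi_n)$ that is still dominated by $(g_n)$. The invertibility of $a$ is tailor-made for this: setting
$$\tilde\varphi_i = \sum\nolimits_k (a^{-1})_{ik}\varphi_k,$$
one checks directly that $\langle \tilde\varphi_i,\psi_j\rangle=\sum_k(a^{-1})_{ik}a_{kj}=\delta_{ij}$, so $(\tilde\varphi_n)$ is a biorthogonal system to $(\psi_n)$ inside $L_1(m)$.

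The first real step will be to show that $(\tilde\varphi_n)$ is $c$-dominated by $(g_n)$. I would use the Banach-space version (i)' of Proposition \ref{p2}, applied with $B=\ell_\infty$ (or any $B$). For $x_1,\dots,x_N\in B$, rearranging gives
$$\sum\nolimits_n \tilde\varphi_n x_n=\sum\nolimits_k \varphi_k y_k\quad\text{with}\quad y_k=\sum\nolimits_n (a^{-1})_{nk}x_n.$$
Since $(\varphi_n)$ is $1$-dominated, the $L_1(m;B)$-norm of the left side is bounded by $\int\|\sum_k g_k y_k\|_B\,d\P$. But $\sum_k g_k y_k=\sum_n x_n\tilde g_n$ with $\tilde g_n=\sum_k(a^{-1})_{nk}g_k$, and since the matrix $a^{-1}/c$ has norm $\le 1$, inequality \eqref{c2} yields $\E\|\sum_n x_n\tilde g_n\|_B\le c\,\E\|\sum_n x_n g_n\|_B$. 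Combining these chains and invoking (i)'$\Leftrightarrow$(ii) of Proposition \ref{p2} produces an operator $\tilde u:L_1(\P)\to L_1(m)$ of norm $\le c$ with $\tilde u(g_n)=\tilde\varphi_n$, i.e.\ $(\tilde\varphi_n)$ is $c$-dominated by $(g_n)$.

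The second step is to invoke Corollary \ref{cbl} in the symmetric situation $\varphi^1_n=\varphi^2_n=\tilde\varphi_n$, $\psi^1_n=\psi^2_n=\psi_n$. Strictly speaking Corollary \ref{cbl} is stated for $1$-dominated sequences, but Remark \ref{70} handles the scaling: replace $(\tilde\varphi_n)$ by $(c^{-1}\tilde\varphi_n)$, which is $1$-dominated and biorthogonal to $(c\psi_n)$, a system uniformly bounded by $cC'$. Corollary \ref{cbl} then gives
$$\sum|a_n|\le \alpha(cC',cC')\sup_{t_1,t_2}\bigl|\sum a_n (c\psi_n)(t_1)(c\psi_n)(t_2)\bigr|,$$
so $(\psi_n)$ is $\otimes^2$-Sidon with constant $c^2\alpha(cC',cC')$, which depends only on $c$ and $C'$.

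There is no serious obstacle here; the only thing to get right is the bookkeeping in step one, ensuring that the matrix multiplication turning $(\tilde\varphi_n)$ back into $(\varphi_n)$ is mirrored on the Gaussian side by a matrix multiplication that \eqref{c2} can absorb. Once the transposition is written out correctly, \eqref{c2} does exactly the job, and the rest is a direct appeal to Corollary \ref{cbl} combined with the homogeneity observation of Remark \ref{70}.
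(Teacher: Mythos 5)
Your proposal is correct and follows essentially the same route as the paper: both define $\tilde\varphi_i=\sum_k(a^{-1})_{ik}\varphi_k$, verify biorthogonality, observe that this rotated system is $c$-dominated by $(g_n)$ (you via Proposition \ref{p2}(i)' and \eqref{c2}, the paper via the rotational invariance of Gaussian measure, which is the same mechanism), and then reduce to Corollary \ref{cbl} through the rescaling of Remark \ref{70}. The only difference is that you spell out the domination step in more detail and carry the constant $c^2\alpha(cC',cC')$ explicitly, which is a harmless elaboration of the paper's terser argument.
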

\begin{proof}
There is a system $\{\varphi'_n\mid 1\le n\le N\} \subset L_1(m)$ that is
biorthogonal to $(\psi_n)$ and $c$-dominated by $(g_n)$.
 Indeed, setting $b=a^{-1}$, and
 $\varphi'_i=\sum_k b_{ik} \varphi_k$ we have
 $\langle \varphi'_i, \psi_j\rangle =\sum_k b_{ik} a_{kj}= \d_{ij}$.
 Clearly, $(\varphi'_n)$
 is $1$-dominated by $(\sum_k b_{nk} g_k)$,
 and by the rotational invariance of Gaussian measure,
 the latter is $c$-dominated by $(g_n)$. Therefore
 the present statement follows from
 Corollary \ref{cbl} (and Remark \ref{70}).
\end{proof}
 
 \begin{rem}[On almost biorthogonal systems]\label{R71}
  In the situation of the preceding Corollary,
  let $\theta=\|a-I\|$. If $\theta<1$ then
  $\|b\|\le (1-\theta)^{-1}$.
\end{rem}

For convenience, we record here the following elementary fact.

\begin{lem}\label{lr} Let $\{g_n\mid 1\le n\le N\}\subset L_1(\P)$ and
$\{\varphi_n\mid 1\le n\le N\}\subset L_\infty(m)$.
Assume there is $T:\ L_1(\P)\to L_1(m)^{**}$ of norm $1$
such that  \begin{equation}\label{65}
\forall n,k\le N\quad \langle T(g_n), \varphi_k\rangle=\delta_{nk}.\end{equation}
Then for any $\vp>0$ there is $T^\vp:\ L_1(\P)\to L_1(m)$
with norm $\le 1+\vp$ satisfying \eqref{65}. 
\end{lem}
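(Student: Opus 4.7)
The plan is to apply the principle of local reflexivity of Lindenstrauss--Rosenthal to the finite-dimensional subspaces $E:=T(\operatorname{span}\{g_n\})\subset L_1(m)^{**}$ and $F:=\operatorname{span}\{\varphi_k\}\subset L_\infty(m)=L_1(m)^*$, and then to promote the resulting finite-rank $L_1(m)$-valued operator on $\operatorname{span}\{g_n\}$ to a globally defined operator on $L_1(\P)$ via an ultrapower argument.

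For the first step, local reflexivity furnishes, for any $\vp'>0$, a linear injection $u:E\to L_1(m)$ with $\|u\|\le 1+\vp'$ and $\langle u(e),\varphi\rangle=\langle e,\varphi\rangle$ for all $e\in E$, $\varphi\in F$. Setting $h_n:=u(T(g_n))\in L_1(m)$ immediately gives $\int h_n\varphi_k\,dm=\delta_{nk}$, and the finite-rank operator $S_0:\operatorname{span}\{g_n\}\to L_1(m)$ defined by $S_0(g_n)=h_n$ satisfies $\|S_0\|\le\|u\|\cdot\|T|_{\operatorname{span}\{g_n\}}\|\le 1+\vp'$.

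For the second step, one uses that $L_1(m)^{**}$ embeds isometrically into an ultrapower $(L_1(m))_{\cl U}$ (itself isometric to some $L_1$-space, by Heinrich's theorem). Composing $T$ with this embedding yields an operator $\tilde T:L_1(\P)\to(L_1(m))_{\cl U}$ of norm $\le 1$, which is represented by a family $(T_\alpha)$ of operators $T_\alpha:L_1(\P)\to L_1(m)$ with $\sup_\alpha\|T_\alpha\|$ arbitrarily close to $1$; along $\cl U$ the biorthogonality $\int T_\alpha(g_n)\varphi_k\,dm=\delta_{nk}$ is satisfied to arbitrarily small error $\eta$. A finite-rank perturbation of a well-chosen $T_\alpha$, of norm $O(\eta)$ (obtained by applying the open mapping theorem to the surjective constraint map $L(L_1(\P),L_1(m))\to M_N$, $S\mapsto[\langle S(g_n),\varphi_k\rangle]$, which has a bounded right-inverse), then produces exact biorthogonality. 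Choosing $\vp'$ and $\eta$ sufficiently small yields $T^\vp:L_1(\P)\to L_1(m)$ with $\|T^\vp\|\le 1+\vp$ satisfying \eqref{65}.

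The main obstacle is the second step: because $L_1(m)$ is not an injective Banach space, the finite-rank operator $S_0$ defined on $\operatorname{span}\{g_n\}$ cannot simply be extended by a Hahn--Banach argument while keeping the target in $L_1(m)$. The ultrapower construction---or equivalently, an iterated local reflexivity combined with a net-limit argument---is the essential technical device, providing a simultaneous approximation of the bidual-valued $T$ by $L_1(m)$-valued operators of comparable norm on finite sets of test vectors. The surjectivity of the finite-dimensional biorthogonality constraint, combined with the open mapping theorem, then allows one to pass from approximate to exact biorthogonality at a negligible cost in operator norm.
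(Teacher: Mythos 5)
The first step of your argument (local reflexivity applied to $E=T(\operatorname{span}\{g_n\})$ and $F=\operatorname{span}\{\varphi_k\}$, yielding a finite-rank $S_0$ on $\operatorname{span}\{g_n\}$) is fine, and you correctly identify the real obstacle: $L_1(m)$ is not injective, so $S_0$ cannot be extended to $L_1(\P)$ by Hahn--Banach. But the ultrapower device you propose to get around this has a genuine gap. The claim that $\tilde T:\ L_1(\P)\to (L_1(m))_{\cl U}$ ``is represented by a family $(T_\alpha)$ of operators $T_\alpha:\ L_1(\P)\to L_1(m)$ with $\sup_\alpha\|T_\alpha\|$ arbitrarily close to $1$'' is not justified, and is false for general operators into ultrapowers: one may choose, for each $f$, a bounded representative $(y_\alpha(f))_\alpha$ of $\tilde T(f)$, but there is no way in general to make $f\mapsto y_\alpha(f)$ linear with a uniform norm bound. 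The trouble already appears when the domain is $\ell_1$ with its unit vector basis $(e_n)$ and $\cl U$ a countably incomplete ultrafilter: one can get $\|y_\alpha(e_n)\|\le (1+\vp)\|\tilde T\|$ eventually along $\cl U$ for each fixed $n$, but not uniformly in $n$, which is exactly what a bound on $\|T_\alpha\|$ would require. So the lifting you need is not automatic and would itself require the $\cl L_1$-structure of $L_1(\P)$ to be invoked carefully; as written the step does not go through.

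The paper's proof is arranged so that this extension problem never arises. Since $L_1(m)^{**}$ is an abstract $L_1$-space, hence a $\cl L_1$-space in the sense of Lindenstrauss--Rosenthal, one first finds a \emph{finite rank} operator $S:\ L_1(m)^{**}\to L_1(m)^{**}$ with $\|S\|<1+\vp/4$ that restricts to the identity on $E=\operatorname{span}\{T(g_n)\}$. The composition $ST$ is then a globally defined operator on $L_1(\P)$ whose range lies in the finite-dimensional $F=S(L_1(m)^{**})\supset E$. Applying local reflexivity to $F$ (with the $\varphi_k$ as test functionals) gives $J:\ F\to L_1(m)$ with $\|J\|<1+\vp$ and $\langle J(e),\varphi_k\rangle=\langle e,\varphi_k\rangle$ for $e\in E$, and then $T^\vp=JST$ is a bona fide operator $L_1(\P)\to L_1(m)$ of the right norm satisfying \eqref{65}. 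The idea you are missing is to compose with a finite-rank operator on the \emph{bidual} before invoking local reflexivity, so that local reflexivity outputs a map on a finite-dimensional space that has already ``absorbed'' all of $L_1(\P)$ via $ST$; by contrast, applying local reflexivity first, as you do, produces a finite-rank $S_0$ on a subspace that one then struggles to extend. A symmetric fix of your proof would use the $\cl L_1$-structure of the \emph{domain} $L_1(\P)$ to find a finite-rank $Q$ on $L_1(\P)$ of norm $<1+\vp$ fixing $\operatorname{span}\{g_n\}$, and then apply local reflexivity to the finite-dimensional range of $TQ$; this avoids ultrapowers entirely.
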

\begin{proof} Fix $0<\vp<1$.
Let $E={\rm span}\{T(g_n)\mid 1\le n\le N\}\subset L_1(m)^{**}$.
The space $L_1(m)^{**}$ is an abstract $L_1$-space
and more generally a $\cl L_1$-space in the sense of \cite{LiRo}.
In particular, there is a finite rank operator 
$S:\  L_1(m)^{**} \to L_1(m)^{**}$ with norm $< 1+\vp/4$
that is the identity on  $E$. Let $F$ be the range of $S$.
Note $E\subset F$.
By the local reflexivity principle, the inclusion $F\subset L_1(m)^{**}$
is the weak* limit of a net $J_i:F\to L_1(m)$
with $\|J_i\|<1+\vp$. By a simple perturbation argument
(see \cite{LiRo} for details) we may adjust $J_i$
so that $\langle J_i(e) ,\varphi_k\rangle= \langle e ,\varphi_k\rangle$
for any $e\in F$ (and hence for any $e\in E$). Then $T^\vp=J_iST:\ L_1(\P)\to L_1(m)$
satisfies \eqref{65} and $\|T^\vp\|\le 1+\vp $ .
\end{proof}


   \section{Randomly Sidon   systems}\label{srs}
   
   In this section, we denote simply by $(g_n)$ the sequence
   denoted previously by $(g^\C_n)$.
   In connection with Rider's paper \cite{Ri},
 let us  say that
   a sequence $(\psi_n)$
   is randomly Sidon if there is a constant
   $C$ such that for any finite scalar sequence $(a_n)$ we have
   $$\sum |a_n|\le C \E\|\sum g_n a_n \psi_n\|_\infty.$$
   Clearly, Sidon implies randomly Sidon.\\
   Assuming $(\psi_n)$ bounded in $L_\infty$, it is easy to see by a truncation
   argument (as in \cite{Pi} or in Lemma \ref{68} below)
   that this is equivalent  to the same property
   with a Bernoulli sequence $(\vp_n)$
   (i.e. independent uniformly distributed choices of signs) in place of $(g_n)$.
   The latter case was considered by Rider \cite{Ri}
   when the $\varphi_n$'s are
     distinct characters $(\gamma_n)$    on a compact Abelian group
     and
    he proved  that 
   randomly Sidon sets of characters are Sidon. 
      
    Assume $(\psi_n)$ bounded in $L_\infty$.
   Bourgain and Lewko \cite{BoLe} observed using Slepian's lemma
   (see Remark \ref{rbath0})
     that, for any fixed $k$,
     if the $k$-fold tensor product $(\psi_n\otimes\cdots\otimes\psi_n)$
     is randomly Sidon, then $(\psi_n)$ is randomly Sidon. Thus
      every uniformly bounded $\otimes^k$-Sidon sequence
   is randomly Sidon. In particular,
   they proved that every uniformly bounded 
  orthonormal system  satisfying \eqref{1} is 
randomly Sidon.
   
   In the remarks that follow we try to clarify the relationship
   between this notion and the notion
   of sequence dominated by $(g_n)$ introduced in Definition
   \ref{dom}.

   \begin{pro}\label{rs1}      Consider  a sequence 
 $(\varphi_n)_{1\le n\le N}$ in $L_1(m)$,
The following properties are equivalent.
\begin{itemize}
\item[(i)] For any   $(f_1,\cdots,f_N)$ in  $L_\infty(m)$ we have
$\sum |\langle f_n, \varphi_n\rangle |\le \E \|\sum g_n f_n\|_\infty.$
\item[(i)'] For any   $(f_1,\cdots,f_N)$ in  $L_\infty(m)$ we have
$|\sum \langle f_n, \varphi_n\rangle |\le \E \|\sum g_n f_n\|_\infty.$
\item[(ii)] There is an operator $u:\ L_1\to L_1$ with $\|u\|\le 1$ such that
$u(g_n) =\varphi_n$.
\end{itemize}
\end{pro}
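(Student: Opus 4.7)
The plan is to establish (i)$\Leftrightarrow$(i)'$\Leftrightarrow$(ii), with Hahn-Banach in the space $L_1(\P;L_\infty(m))$ and the descent technology of Lemma \ref{lr} as the main tools.

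The equivalence (i)$\Leftrightarrow$(i)' is routine: (i)$\Rightarrow$(i)' is the triangle inequality, and for the converse I would choose unimodular scalars $\varepsilon_n$ with $\varepsilon_n\langle f_n,\varphi_n\rangle=|\langle f_n,\varphi_n\rangle|$, apply (i)' to $(\varepsilon_n f_n)$, and invoke the rotational invariance of each complex $g_n$ (plus independence) to see that $(\varepsilon_n g_n)$ has the same joint distribution as $(g_n)$, so the $\varepsilon_n$'s disappear from the right-hand side. For (ii)$\Rightarrow$(i)', I would use the natural identification $L_1(\P;L_\infty(m))^*=\mathcal{L}(L_1(\P),L_\infty(m)^*)$ and note that $\mathcal{L}(L_1(\P),L_1(m))$ embeds there via $L_1(m)\subset L_1(m)^{**}=L_\infty(m)^*$. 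Any $u$ with $\|u\|\le 1$ and $u(g_n)=\varphi_n$ then pairs with $\sum_n g_n\otimes f_n$ to give $\sum_n\langle\varphi_n,f_n\rangle$, bounded by $\|u\|\cdot\|\sum g_n\otimes f_n\|_{L_1(\P;L_\infty(m))}=\|u\|\cdot\E\|\sum g_n f_n\|_\infty$.

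The heart of the argument is (i)'$\Rightarrow$(ii), by Hahn-Banach. I would define $L(\sum g_n\otimes f_n):=\sum\langle f_n,\varphi_n\rangle$ on the span of $\{g_n\otimes f:n\le N,\,f\in L_\infty(m)\}$ inside $L_1(\P;L_\infty(m))$. Both well-definedness and the norm bound $\|L\|\le 1$ follow directly from (i)', once one identifies $\|\sum g_n\otimes f_n\|_{L_1(\P;L_\infty(m))}=\E\|\sum g_n f_n\|_\infty$. Hahn-Banach extends $L$ to $\tilde L$ of norm $\le 1$ on all of $L_1(\P;L_\infty(m))$; under the duality above this is an operator $T:L_1(\P)\to L_1(m)^{**}$ with $\|T\|\le 1$, and testing $\tilde L(g_n\otimes f)=\langle T(g_n),f\rangle$ against every $f\in L_\infty(m)$ forces $T(g_n)=\varphi_n$ as elements of $L_1(m)^{**}$ (and these values already lie in $L_1(m)$).

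The main obstacle is that $T$ a priori lands in $L_1(m)^{**}$ rather than $L_1(m)$. This is precisely the situation handled by Lemma \ref{lr}: via the local reflexivity principle together with the $\mathcal{L}_1$-space structure of $L_1(m)^{**}$, one obtains for each $\varepsilon>0$ an operator $u^\varepsilon:L_1(\P)\to L_1(m)$ with $u^\varepsilon(g_n)=\varphi_n$ and $\|u^\varepsilon\|\le 1+\varepsilon$. A weak-$*$ compactness argument in $\mathcal{L}(L_1(\P),L_1(m)^{**})$ then yields $u$ with $\|u\|\le 1$ as required.
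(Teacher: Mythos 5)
Your Hahn--Banach step and the equivalences (i)$\Leftrightarrow$(i)' and (ii)$\Rightarrow$(i)' are correct and essentially match the paper. The gap is in the descent from $L_1(m)^{**}$ to $L_1(m)$.

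Two problems. First, Lemma \ref{lr} is not ``precisely the situation'' here: its hypotheses place $\{\varphi_n\}$ in $L_\infty(m)$ and its conclusion is only the biorthogonality $\langle T^\varepsilon(g_n),\varphi_k\rangle=\delta_{nk}$, not $T^\varepsilon(g_n)=\varphi_n$; in the present proposition $\varphi_n\in L_1(m)$ and you need the operator to reproduce $\varphi_n$ exactly. One could try to adapt the local reflexivity argument, but one should be honest that it is a different statement. Second, and more seriously, the closing weak-$*$ compactness step is circular. A bounded net $u^\varepsilon\in \mathcal{L}(L_1(\P),L_1(m))$ has a weak-$*$ convergent subnet only after you embed into the dual space $\mathcal{L}(L_1(\P),L_1(m)^{**})\cong\bigl(L_1(\P)\hat\otimes L_\infty(m)\bigr)^*$, and the limit $u$ lands in $L_1(m)^{**}$ with no reason to take values in $L_1(m)$. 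So after all the effort you are exactly back where you started: an operator into the bidual with norm $\le 1$. The combination ``$(1+\varepsilon)$-operators into $L_1(m)$ followed by weak-$*$ limit'' does not produce a contraction into $L_1(m)$.

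The clean fix, which is what the paper does, bypasses Lemma \ref{lr} entirely: there is a \emph{norm-one projection} $P:L_1(m)^{**}\to L_1(m)$, coming from the fact that $L_1(m)^{**}$ is an abstract $L_1$-space in which $L_1(m)$ is a band (the Yosida--Hewitt/Hahn decomposition of a finitely additive measure into its countably additive and purely finitely additive parts). Since each $\varphi_n$ already lies in $L_1(m)$, you have $P(\varphi_n)=\varphi_n$, so $Pu:L_1(\P)\to L_1(m)$ satisfies $\|Pu\|\le 1$ and $Pu(g_n)=\varphi_n$, giving (ii) exactly with constant $1$. This is both shorter and avoids the $\varepsilon$-loss that your route cannot eliminate.
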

 \begin{proof}  The equivalence $(i)\Leftrightarrow$ (i)' is obvious
 because for any $z_n\in \T$
 the right hand side is unchanged
 when we replace $(f_n)$ by $(z_nf_n)$.\\
 Assume (i). Consider the linear form
 $$ \sum g_n f_n \mapsto \sum \langle f_n, \varphi_n\rangle $$
 and extend it by Hahn-Banach to $\xi\in L^1(\P; L_\infty(m))^*$ of norm $\le 1$
  such that $\xi( g_n \otimes f_n)= \langle f_n, \varphi_n\rangle$.
   This linear form
 $\xi\in L^1(\P; L_\infty(m))^*$
 defines an operator $u:\ L^1(\P)\to L_1(m)^{**}$ with norm
  $\le 1$ such that $\langle u(g_n ),f_n\rangle=\xi( g_n \otimes f_n)= \langle f_n, \varphi_n\rangle$.
  Therefore $u(g_n )=\varphi_n$.
   Composing $u$ with the norm $1$ projection
   from $L_1(m)^{**}$ to  $L_1(m)$ (associated to the Hahn
   decomposition) we obtain (ii).\\
  Conversely, if (ii) holds
  we have
  $$\int\|\sum u(g_n)(t) \otimes f_n\|_\infty dm(t)
  \le\E \|\sum g_n f_n\|_\infty$$
  and since
  $$|\int \sum u(g_n)(t)  f_n(t)  dm(t)|\le \int {ess}\sup_s|\sum u(g_n)(t)  f_n(s)|  dm(t)$$
   and $ u(g_n) =\varphi_n$  we obtain (ii) $\Rightarrow$ (i)'.
\end{proof}

When the functions $(\varphi_n)_{1\le n\le N}$ are 
of the form  $\varphi_n= \ovl{\gamma_n}/C$, where $(\gamma_n)$ are distinct characters on a compact Abelian group,  and $C$ a fixed constant,  (i) implies\begin{itemize}
\item[(iii)] For any $a_n\in \CC$
$$|\sum a_n|\le C\E\| \sum a_n g_n\gamma_n\|_\infty.$$
\end{itemize}
By the ``sign invariance" (complex sense) of $(g_n)$ the latter is equivalent to
\begin{itemize}
\item[(iii)']  $\sum |a_n|\le  C\E\| \sum a_n g_n\gamma_n\|_\infty.$
\end{itemize}
Moreover,   a simple averaging 
   shows that
  (iii)    is equivalent to
  \begin{itemize}
\item[(iv)] For any $f_n\in L_\infty$ 
$$\sum |\hat {f}_n (\gamma_n)|\le  C\E\| \sum  g_n f_n \|_\infty.$$
\end{itemize}
Indeed, this follows from
\begin{equation}\label{rs33}\E\| \sum  g_n f_n \|_\infty\ge 
\sup_s\E\sup_t|\sum g_n \gamma_n(s) f_n(t-s)|
\ge \E\|\sum  g_n \gamma_n\ast f_n \|_\infty= 
\E\|\sum  g_n\hat {f}_n (\gamma_n) \gamma_n\|_\infty
.\end{equation}
Thus we conclude:
\begin{rem}
The set $(\gamma_n)$ is   randomly Sidon with constant $C$ iff
$( {\gamma_n})$ (or equivalently $(\ovl{\gamma_n})$) is
$C$-dominated by $(g_n)$.
\end{rem}
We now turn to
the analogous questions for more general function systems.
 \begin{pro}\label{rs2} Consider a finite sequence
$(\psi_n)_{1\le n\le N}$ in $L_\infty(m)$.
The following properties are equivalent.
\begin{itemize}
\item[(i)] For any  complex $N\times N$-matrix $[a_{nk}]$ we have
\begin{equation}\label{rs3}|\sum\nolimits_1^N a_{nn}|\le \E\|\sum\nolimits_n g_n (\sum\nolimits_k a_{nk} \psi_k)\|_\infty\end{equation}
\item[(ii)] For any $\vp>0$, there is an operator $u:\ L_1(\P)\to L_1(m)$ with $\|u\|\le 1+\vp$ such that
$(u(g_n)) $  is biorthogonal to $(\psi_n) $.
\end{itemize}
 \end{pro}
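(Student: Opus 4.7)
My plan is to mirror the proof of Proposition \ref{rs1}, replacing the role of the single functions $\varphi_n$ there by the linear combinations $f_n=\sum\nolimits_k a_{nk}\psi_k$ arising from a matrix $[a_{nk}]$.

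For (ii) $\Rightarrow$ (i), I use biorthogonality to rewrite
\[\sum\nolimits_n a_{nn}=\sum\nolimits_{n,k} a_{nk}\langle u(g_n),\psi_k\rangle=\sum\nolimits_n\langle u(g_n),f_n\rangle,\]
and then invoke the fact that any bounded $u:L_1(\P)\to L_1(m)$ is automatically regular with $\|u\|_{reg}=\|u\|$ (the $L_1$-case recalled before Proposition \ref{p2}). Applying the regular extension to the $T$-parametrised family $\{\sum_n g_n f_n(s)\}_{s\in T}\subset L_1(\P)$ yields
\[\Bigl|\sum\nolimits_n\langle u(g_n),f_n\rangle\Bigr|\le \int\sup_s\Bigl|\sum\nolimits_n u(g_n)(t)f_n(s)\Bigr|\,dm(t)\le \|u\|\,\E\Bigl\|\sum\nolimits_n g_n f_n\Bigr\|_\infty,\]
and letting $\vp\to 0$ delivers (i).

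For (i) $\Rightarrow$ (ii), I first observe that (i) forces $(\psi_k)_{1\le k\le N}$ to be linearly independent: otherwise a nontrivial relation $\sum c_k\psi_k=0$ would let me produce a matrix $[a_{nk}]$ with $\sum a_{nn}=1$ but $f_n=0$ for every $n$. Then I define a linear functional $\ell$ on the finite-dimensional subspace $E={\rm span}\{g_n\otimes\psi_k\mid 1\le n,k\le N\}\subset L_1(\P;L_\infty(m))$ by $\ell(g_n\otimes\psi_k)=\delta_{nk}$; hypothesis (i) is exactly the statement $\|\ell\|_{E^*}\le 1$. Hahn--Banach extends $\ell$ to a norm-one functional $\tilde\ell$ on $L_1(\P;L_\infty(m))$, which in turn is identified via the pairing $\tilde\ell(g\otimes\phi)=\langle T(g),\phi\rangle$ with an operator $T:L_1(\P)\to L_1(m)^{**}$ of norm $\le 1$ satisfying $\langle T(g_n),\psi_k\rangle=\delta_{nk}$. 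Lemma \ref{lr} then furnishes, for each $\vp>0$, the desired $T^\vp:L_1(\P)\to L_1(m)$ of norm $\le 1+\vp$ still biorthogonal to $(\psi_k)$, proving (ii).

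The chief subtlety is that Hahn--Banach naturally produces a functional on $L_1(\P;L_\infty(m))$ whose dual avatar is an operator into the bidual $L_1(m)^{**}$ rather than $L_1(m)$ itself, because $L_\infty(m)^*=L_1(m)^{**}$; this is precisely what the preceding Lemma \ref{lr} (local reflexivity plus a small perturbation) was introduced to overcome, at the expense of the arbitrarily small $\vp$. With that tool in hand the argument becomes a direct analogue of the proof of Proposition \ref{rs1}.
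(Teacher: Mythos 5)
Your proof is correct. The direction (ii) $\Rightarrow$ (i) is the same as the paper's (which itself just reprises the argument for (ii) $\Rightarrow$ (i)$'$ in Proposition \ref{rs1}), including the observation that $|\sum_n\langle u(g_n),f_n\rangle|\le\|u\|_{reg}\,\E\|\sum_n g_nf_n\|_\infty$ with $\|u\|_{reg}=\|u\|$ in the $L_1$ case, followed by $\vp\to 0$. For (i) $\Rightarrow$ (ii) you and the paper both start by applying Hahn--Banach to the functional $\sum_n g_nf_n\mapsto\sum_n a_{nn}$, but the lifting step is handled differently. The paper keeps the target small: it extends the functional only over $L_1(\P;E)$ with $E=\mathrm{span}[\psi_n]$, so the resulting operator maps into $E^*$; since $E$ is finite dimensional and weak$^*$-closed, $E^*$ is identified with the quotient $L_1(m)/N$ ($N$ the preannihilator, of finite codimension), and the operator is lifted directly using the $(1+\vp)$-lifting property of $L_1$, after first restricting to a $(1+\vp)$-complemented finite-dimensional $\ell_1^m$-like subspace of $L_1(\P)$ containing the $g_n$. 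You instead extend the functional over all of $L_1(\P;L_\infty(m))$, land in $L_\infty(m)^*=L_1(m)^{**}$, and invoke the already-recorded Lemma \ref{lr} (local reflexivity plus the $\mathcal L_1$-structure) to descend to $L_1(m)$ at the cost of $(1+\vp)$. This is a genuine alternative: the paper's route is slightly more elementary (a concrete finite-codimensional quotient, no local reflexivity), whereas yours is more systematic and makes visible why Lemma \ref{lr} was stated in advance --- indeed the paper itself reuses Lemma \ref{lr} in this exact way in the proof of Proposition \ref{P}. You also correctly flag that (i) forces linear independence of $(\psi_k)$, which is needed for the functional $\ell$ on $\mathrm{span}\{g_n\otimes\psi_k\}$ to be well defined.
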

 \begin{proof} 
 Let $E={\rm span}[\psi_n]$. (i) $\Rightarrow$ (ii) is proved using Hahn-Banach
 as for  Proposition  \ref{rs1}, but a priori this leads to 
 an operator $u:\ L_1\to {E}^{*}$ with $\|u\|\le 1 $ 
 and $(u(g_n)) $   biorthogonal to $(\psi_n) $. But
 since $E$ (being finite dimensional) is weak*-closed, we may identify
 ${E}^{*}$ to   $L_1/N$ where $N$ is the preannihilator of $E$.
 Applying the lifting property of $L_1$-spaces, we obtain (ii).
 More precisely, for any $\vp>0$ there is a subspace
 $G\subset L_1(\P)$ containing $\{g_n\}$ that is $(1+\vp)$-isomorphic
 to a finite dimensional $\ell_1$-space and $(1+\vp)$-complemented
 in $L_1(\P)$. Then it suffices to lift $u_{|G}$ and that is immediate.
 The proof  of (ii) $\Rightarrow$ (i) is similar to
 the one for (ii) $\Rightarrow$ (i)' in  Proposition  \ref{rs1}.
 \end{proof}
 
   \begin{rem}\label{Rrs4} By Theorem \ref{nt1}, if  a sequence $(\psi_n)$ assumed bounded in $L_\infty$ satisfies \eqref{rs3} for all $N$
   then $(\psi_n\otimes \psi_n)$ is   Sidon (in other words 
   $(\psi_n)$ is $\otimes^2$-Sidon). \end{rem}
   \begin{rem}\label{sav1}
   In the converse direction, let $(\varphi_n)$ and $(\psi_n)$
   be mutually biorthogonal sequences in $L_\infty$.
   Assume $\|\varphi_n\|_\infty\le 1$ for all $n$.
  We claim that if $(\psi_n)$ is randomly Sidon
   with constant $\alpha$, then
   $(\psi_n\otimes \psi_n)$ satisfies \eqref{rs3} 
     with the same constant $\alpha$.
   Indeed, we have 
   $$  \E\|\sum\nolimits_n g_n (\sum\nolimits_k a_{nk} \psi_k\otimes \psi_k ) \|_\infty
   \ge \sup_s \E\|\sum\nolimits_n g_n (\sum\nolimits_k a_{nk} \psi_k(s)\otimes \psi_k(\cdot) ) \|_\infty $$
   and since $(g_n \varphi_n(s))$ is   1-dominated by $(g_n)$
   $$\ge \sup_s \E\|\sum\nolimits_n g_n \varphi_n(s)  (\sum\nolimits_k a_{nk}\psi_k(s)\otimes \psi_k(\cdot) ) \|_\infty .
   $$
   Therefore, integrating in $s$ and using Jensen again
   we obtain
   $$  \E\|\sum\nolimits_n g_n (\sum\nolimits_k a_{nk}\psi_k\otimes \psi_k ) \|_\infty
   \ge \E\|\sum\nolimits_n  a_{nn} g_n    \psi_n(\cdot)  \|_\infty 
   \ge \alpha^{-1}\sum |a_{nn}|.$$
   This proves our claim.
   \end{rem}
    \begin{rem}[randomly $\otimes^k$-Sidon implies randomly Sidon]\label{rbath0} As observed by Bourgain and Lewko
\cite{BoLe}, a well known variant (due to Sudakov) of Slepian's comparison Lemma
shows that if a  uniformly bounded system is
randomly $\otimes^k$-Sidon for some   $k\ge 2$
then it is already randomly  Sidon.  
This can be seen by an idea
due to Simone Chevet \cite{Che}:  
let $K_1,\cdots,K_k$ be compact sets,   
 $x^1_j\in C(K_1),\cdots, x^k_j\in C(K_k)$ finitely supported families, then,  
 $$
\E\|\sum g_j x^1_j \otimes\cdots\otimes x^k_j\|_{C(K_1\times\cdots\times K_k)} \le \sqrt k
\sum_{m=1}^k (\sup\n_j \prod_{q\not=m} \|x^q_j\|)\ \E\|\sum g_j x^m_j\| .$$
  Chevet's idea can be applied in a  somewhat more general context (see \cite{Che}), it gives a similar   bound
 for
$\E\|\sum\n_{i(1),\cdots,i(k)} g_{i(1),\cdots,i(k)} x^1_{i(1)} \otimes \cdots\otimes x^k_{i(k)}\|_{C(K_1\times\cdots\times K_k)},$
with $(g_{i(1),\cdots,i(k)})$ i.i.d. real valued Gaussian.\\
In any case, if we apply this to $x^1_j =\cdots= x^k_j=\psi_k$,
$B_1 =\cdots= B_k=L_\infty$ (recall $L_\infty$ is isometric to $C(K)$ for some $K$),
we find that, for $(\psi_j)$ uniformly bounded in $L_\infty$, randomly $\otimes^k$-Sidon implies randomly Sidon.
\end{rem}

 \begin{thm} \label{cbl2} 
 Let $(\psi_n)$ be bounded in $L_\infty(T,m)$,
 and such that there is another system
 $(\varphi_n)$   bounded in $L_\infty(T,m)$ such that
 $\int \psi_n \ovl{\varphi_k} dm=\d_{n,k}$.
 Then $(\psi_n)$  is randomly Sidon
 iff it is $\otimes^4$-Sidon and this holds iff
 it is $\otimes^k$-Sidon for some (or all) $k\ge 4$.
 In particular, this is valid when $(\psi_n)$
 is a uniformly bounded orthonormal system.
    \end{thm}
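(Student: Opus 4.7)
The argument closes a cycle
\[\text{randomly Sidon}\ \Longrightarrow\ \otimes^4\text{-Sidon}\ \Longrightarrow\ \otimes^k\text{-Sidon for some (or all)}\ k\ge 4\ \Longrightarrow\ \text{randomly Sidon},\]
of which the second and third implications come almost for free from the preceding material. The biorthogonality relation $\int\psi_n\overline{\varphi_n}\,dm=1$ together with $\sup_n\|\varphi_n\|_\infty<\infty$ forces $\inf_n\|\psi_n\|_1>0$, so Remark~\ref{mel2} (iteratively tensoring with $(\psi_n)$) propagates $\otimes^4$-Sidon to $\otimes^k$-Sidon for every $k\ge 4$. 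Sidon trivially implies randomly Sidon (apply the pointwise Sidon inequality to $(g_n(\omega)a_n)$ and integrate, using $\E|g_1|>0$), so $\otimes^k$-Sidon in particular yields randomly $\otimes^k$-Sidon, and Remark~\ref{rbath0} (the Chevet/Sudakov--Slepian comparison) then converts the latter back to randomly Sidon for a uniformly bounded system. For a uniformly bounded orthonormal $(\psi_n)$ the choice $\varphi_n=\psi_n$ furnishes a bounded biorthogonal system, so the final ``in particular'' assertion will follow from the general equivalence.

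The substantive content is the remaining implication, randomly Sidon $\Rightarrow\otimes^4$-Sidon. The plan is to use the randomly Sidon hypothesis to manufacture a sequence dominated by $(g_n)$ in the sense of Definition~\ref{dom} that is biorthogonal to the tensor square $(\psi_n\otimes\psi_n)$, and then invoke Corollary~\ref{cbl} on the product probability space $(T\times T,m\otimes m)$. By Remark~\ref{sav1} (whose hypothesis is met thanks to the standing assumption on $(\varphi_n)$), the randomly Sidon property for $(\psi_n)$ implies that $(\psi_n\otimes\psi_n)\subset L_\infty(m\otimes m)$ satisfies condition~(i) of Proposition~\ref{rs2} with a constant $C$ depending only on the randomly Sidon constant of $(\psi_n)$ and on $\sup_n\|\varphi_n\|_\infty$. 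Proposition~\ref{rs2}(ii) then produces, for any $\vp>0$, an operator $u\colon L_1(\P)\to L_1(m\otimes m)$ of norm at most $C(1+\vp)$ such that $(u(g_n))$ is biorthogonal to $(\psi_n\otimes\psi_n)$.

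Finally I apply Corollary~\ref{cbl} on $(T\times T,m\otimes m)$ with $\varphi^1_n=\varphi^2_n=u(g_n)$ and $\psi^1_n=\psi^2_n=\psi_n\otimes\psi_n$, the rescaling required to normalize the domination constant to $1$ being absorbed via Remark~\ref{70}; this yields $\otimes^2$-Sidon for $(\psi_n\otimes\psi_n)$ on $T\times T$. Rewriting the two $(T\times T)$-arguments as four $T$-arguments converts the resulting estimate directly into the $\otimes^4$-Sidon inequality for $(\psi_n)$ with a constant depending only on the given data. The only place where the uniformly bounded biorthogonal system $(\varphi_n)$ is used in an essential way is Remark~\ref{sav1}; after that, the rest is a transparent chaining of Proposition~\ref{rs2}, Remark~\ref{70}, and Corollary~\ref{cbl}. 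The chief obstacle is purely bookkeeping: tracking the successive rescalings so that the domination constant, the $L_\infty$-bound, and the final $\otimes^4$-Sidon constant remain finite and depend only on the randomly Sidon constant of $(\psi_n)$ and on $\sup_n\|\varphi_n\|_\infty$.
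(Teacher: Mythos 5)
Your proof is correct and follows essentially the same route as the paper's: Remark~\ref{sav1} turns randomly Sidon for $(\psi_n)$ into condition \eqref{rs3} for $(\psi_n\otimes\psi_n)$, and then (via Proposition~\ref{rs2} and Corollary~\ref{cbl}) one deduces that $(\psi_n\otimes\psi_n)$ is $\otimes^2$-Sidon, i.e. $(\psi_n)$ is $\otimes^4$-Sidon — this chain is exactly what the paper compresses into the single citation of Remark~\ref{Rrs4}; and the converse closes via Remark~\ref{rbath0}. The only cosmetic difference is that you unpack Remark~\ref{Rrs4} explicitly and state the $\otimes^4\Rightarrow\otimes^k$ step (Remark~\ref{mel2}) that the paper leaves tacit.
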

     \begin{proof}
     If $(\psi_n)$  is randomly Sidon, by Remarks \ref{sav1} and \ref{Rrs4}
     $(\psi_n\otimes \psi_n)$ is $\otimes^2$-Sidon, which means
     $(\psi_n)$  is $\otimes^4$-Sidon.
       Conversely, if $(\psi_n)$ is $\otimes^k$-Sidon for some   $k\ge 4$,
     then it is randomly $\otimes^k$-Sidon, and by Remark \ref{rbath0}
     it is randomly  Sidon.
\end{proof}
\begin{rem} We do not know whether, in the situation of Theorem \ref{cbl2}, 
 $\otimes^4$-Sidon already implies $\otimes^2$-Sidon.
 By \cite{BoLe}, it does not imply Sidon. 
  \end{rem}

 The next statement aims to   clarify the connection between
 the subGaussian property of a system and its domination by a Gaussian sequence.
 
We will need the following\\
{\bf Notation.} Let $(T;m)$ be a probability space.
Let  $Z $ be a scalar valued random variable on $(T;m)$.
 we denote by
   $(Z^{[k]})$   an i.i.d. sequence of copies of $Z$ on $(T;m)^{\N}$
   so that
   $$\forall t\in  T ^{\N}\quad Z^{[k]} (t)= Z(t_k).$$

We will use the following well known elementary fact.
   \\  There is an absolute constant $\theta>0$ such that
   for any $Z$ with  $\E Z=0$
 \begin{equation}\label{sub2}
 \theta^{-1}\|Z\|_{\psi_2  }
 \le \E\sup |Z^{[k]}|(\log k)^{-1/2}\le  \theta \|Z\|_{\psi_2  }. 
 \end{equation} 
 This is easily proved by relating the growth of the function
 $t\mapsto \P\{ |Z| >t\} $ to the infinite product  appearing in 
 $  \P\{ \sup |Z^{[k])}|(\log k)^{-1/2} >2\theta\}= 1 - \prod_k (1- \P\{|Z|> 2\theta (\log k)^{-1/2}\}$.
  
   \begin{pro} Let $(\varphi_n)$ ($1\le n\le N$) be a system in $L_1(T;m)$.
   Consider the following assertions, where $C$ and $C'$ are positive constants.
   \begin{itemize}
   \item[{\rm (i)}]
   The system $(\varphi _n)$ ($1\le n\le N $)
   satisfies \eqref{1} (i.e. it is $C$-subGaussian) and is such that $\E\varphi _n=0$
   for all $n$.
      \item[{\rm (ii)}] The system $(\varphi^{[k])}_n)$ ($1\le n\le N, k\in \N$)
      is $C''$-dominated by the (Gaussian i.i.d.) sequence $(g^{[k])}_n)$ ($1\le n\le N, k\in \N$).
      \end{itemize}
 Then we have $\rm (i)\Rightarrow\rm  (ii)$ (resp. $\rm (ii)\Rightarrow \rm  (i)$)
 for some constant $C''$ (resp. $C$) depending only on 
  $C$ (resp. $C''$).
 \end{pro}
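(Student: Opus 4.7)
The plan is to combine, in both directions, Proposition~\ref{p2}'s reformulation of ``$(\varphi_n^{[k]})$ is $C''$-dominated by $(g_n^{[k]})$'' as a Gaussian comparison inequality for Banach-valued coefficients, with the two-sided estimate \eqref{sub2} bridging the $\psi_2$-norm of a centered variable and the expected supremum of its i.i.d.\ copies weighted by $(\log k)^{-1/2}$. One direction then reduces to Theorem~\ref{p8}; the other splits off a mean-zero step before inverting \eqref{sub2}.

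For (i)$\Rightarrow$(ii) I would first verify that $(\varphi_n^{[k]})_{n,k}$ is $C'$-subGaussian on $(T,m)^{\N}$ for some $C'=C'(C)$. Given coefficients $(y_{n,k})$, the variables $Y_k=\sum\nolimits_n y_{n,k}\varphi_n^{[k]}$ are mean zero (since $\E\varphi_n=0$), independent across $k$, and satisfy $\|Y_k\|_{\psi_2}\le C(\sum\nolimits_n|y_{n,k}|^2)^{1/2}$. Passing to the equivalent exponential form \eqref{sub0} (valid in the centered case) and multiplying over $k$ by independence gives the corresponding estimate for $\sum\nolimits_k Y_k$ with $\sum\nolimits_{n,k}|y_{n,k}|^2$, which reconverts to subGaussianity of the product system. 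Theorem~\ref{p8} applied to this system then produces an operator $u:L_1\to L_1(m^{\N})$ of norm $\le\tau_0 C'$ with $u(\tilde g_{n,k})=\varphi_n^{[k]}$ for a complex i.i.d.\ Gaussian family $(\tilde g_{n,k})$; identifying the latter with $(g_n^{[k]})$ via their common joint distribution gives (ii).

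For (ii)$\Rightarrow$(i) there are two steps. To deduce $\E\varphi_n=0$, I would test the scalar ($B=\C$) version of Proposition~\ref{p2} against $a_{n',k}=\delta_{n,n'}K^{-1}\mathbf{1}_{k\le K}$, which yields
\begin{equation*}
\int\Bigl|K^{-1}\sum\nolimits_{k\le K}\varphi_n^{[k]}\Bigr|\,dm^{\N}
\;\le\; C''\,\E\Bigl|K^{-1}\sum\nolimits_{k\le K}g_n^{[k]}\Bigr|
\;=\;O(K^{-1/2}).
\end{equation*}
The left-hand side tends to $|\E\varphi_n|$ as $K\to\infty$ by the $L_1$ law of large numbers (valid since $\varphi_n\in L_1$), forcing $\E\varphi_n=0$. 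For the subGaussian bound, I would next apply Proposition~\ref{p2}(i)' with $B=\ell_\infty$ and coefficients $x_{n,k}=y_n(\log k)^{-1/2}e_k$ for $k\ge 2$ (and $x_{n,1}=0$); after truncating and passing to the limit by monotone convergence this gives
\begin{equation*}
\int\sup\nolimits_{k\ge 2}(\log k)^{-1/2}\Bigl|\sum\nolimits_n y_n\varphi_n^{[k]}\Bigr|\,dm^{\N}
\;\le\; C''\,\E\sup\nolimits_{k\ge 2}(\log k)^{-1/2}\Bigl|\sum\nolimits_n y_n g_n^{[k]}\Bigr|.
\end{equation*}
The right-hand side is $O((\sum|y_n|^2)^{1/2})$ by \eqref{sub2} applied to the Gaussian $G=\sum\nolimits_n y_n g_n$, while the left-hand side dominates $\theta^{-1}\|\sum\nolimits_n y_n\varphi_n\|_{\psi_2}$ by \eqref{sub2} applied to the now-centered $Z=\sum\nolimits_n y_n\varphi_n$. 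This is exactly \eqref{1}.

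The only delicate point is really Step~1 of the (ii)$\Rightarrow$(i) direction—squeezing the mean-zero conclusion out of the LLN on one side and the $K^{-1/2}$ decay on the other; after that, both halves are mechanical consequences of Theorem~\ref{p8}, Proposition~\ref{p2} and \eqref{sub2}, and I do not anticipate a serious obstacle.
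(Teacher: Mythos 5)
Your proposal follows the paper's proof essentially step for step: (i)$\Rightarrow$(ii) by passing through \eqref{sub0} to show the i.i.d.\ copies system is subGaussian and then invoking Theorem~\ref{p8}; (ii)$\Rightarrow$(i) by feeding a suitable $\ell_\infty$-valued choice of coefficients into Proposition~\ref{p2} and closing with \eqref{sub2}. The one place you go beyond the paper is worthwhile: the paper invokes \eqref{sub2} to get from $\E\sup_k|Z^{[k]}|(\log k)^{-1/2}\le C''\E\sup_k|S^{[k]}|(\log k)^{-1/2}$ to \eqref{1} without explicitly checking the hypothesis $\E Z=0$ needed for the lower bound in \eqref{sub2} (nor deriving the mean-zero part of (i)), whereas you first squeeze $\E\varphi_n=0$ out of the $L_1$ law of large numbers against the $O(K^{-1/2})$ Gaussian average coming from Proposition~\ref{p2}. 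That step is correct and fills in a point the paper leaves tacit; the rest is the same argument.
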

    \begin{proof} Using the equivalence with \eqref{sub0},  one checks easily that (i)   is essentially equivalent to:
      \begin{itemize}
   \item[{(i)'}] The system $(\varphi^{[k]}_n)$ ($1\le n\le N, k\in \N$)
   satisfies \eqref{1} for some possibly different constant $C'$ depending only on $C$.    \end{itemize}
   Then $\rm (i)'\Rightarrow \rm (ii)$ by Talagrand's \eqref{3}
   and Proposition \ref{p2}.
   The converse  $\rm (ii)\Rightarrow \rm  (i)$ 
   follows from \eqref{sub2} and  
   Proposition \ref{p2}.
   Indeed, \eqref{g2} with $p=1$ applied to $(\varphi^{[k]}_n)$
   (with a suitable choice of $x_n \in \ell_\infty$)
   yields for $Z=\sum a_n \varphi_n$ and $S=\sum a_n g_n$
   $$\E \sup\nolimits_k   |Z^{[k]}|(\log k)^{-1/2}     \le C \E  \sup\nolimits_k   |S^{[k]}|(\log k)^{-1/2} .$$
   By \eqref{sub2} this implies (i).
  \end{proof}

\section{Systems of random matrices}\label{s3}

Assume given a sequence of finite dimensions ${d_n}$. 

From now on $g_n$ will be an independent sequence
of random $d_n\times d_n$-matrices, such that $\{ { {d_n}}^{1/2} g_n(i,j) \mid 1\le i,j\le {d_n}\}$ are i.i.d. normalized $\CC$-valued Gaussian random  variables. Note $\|g_n(i,j)\|_2={d}_n^{-1/2}$.

For each $n$
let $(\varphi _n)$ be a  random matrix of size ${d_n}\times {d_n}$
on $(T,m)$. We call this a ``matricial system".
We will compare $(\varphi _n)$  with the sequence
$(u_n)$  that is an independent sequence
where each $u_n$ is uniformly distributed over the unitary group
$U({d_n})$.
The subGaussian condition  becomes: for any $N$ and
  $y_n\in M_{d_n}$ ($n\le N$) we have

 \begin{equation}\label{11}\|\sum {d_n} \tr (y_n \varphi _n)\|_{\psi_2}\le C (\sum   {d_n}\tr |y_n|^2)^{1/2} =\|\sum {d_n} \tr (y_n g_n)\|_{2}. \end{equation}
 In other words, $\{d_n^{1/2}\varphi_n(i,j)\mid n\ge 1, 1\le i,j\le d_n\}$
 is a $C$-subGaussian system of functions.
The uniform boundedness assumption becomes
\begin{equation}\label{31}
 \exists C' \ \forall n\quad \|\varphi_n\|_{L_\infty(M_{d_n})} \le C'.
\end{equation}
As for the orthonormality condition
it becomes 
\begin{equation}\label{32}\int \varphi_n(i,j) \ovl{\varphi_{n'}(k,\ell)}=d_n^{-1} \delta_{n,n'}\delta_{i,k}\delta_{j,\ell}.\end{equation}
In other words, $\{d_n^{1/2}\varphi_n(i,j)\mid n\ge 1, 1\le i,j\le d_n\}$
 is an orthonormal system.

This is modeled on the case when $(\varphi_n)$ is a sequence of
distinct irreducible representations on a compact group.

Actually, we will  consider a slightly more general situation.
We assume that there are complex-valued  $\{\psi_n(i,j)\mid n\ge 1, 1\le i,j\le d_n\}$
in $L_\infty(m)$ such that
\begin{equation}\label{33}\sup\nolimits_n\| \psi_n\|_{L_\infty(m;M_{d_n} )}\le C'\end{equation} and 
\begin{equation}\label{37}\int \varphi_n(i,j)\ovl{ \psi_{n'}(k,\ell)}\ dm=d_n^{-1} \delta_{n,n'}\delta_{i,k}\delta_{j,\ell}.\end{equation}
Equivalently  
\begin{equation}\label{37b}
\int \varphi_n \otimes \ovl{\psi_{n'}}\ dm =0 \text{ if } n\not=n' \text{ and }
\int \varphi_n \otimes \ovl{\psi_{n} }\ dm =  d_n^{-1}\sum\n_{i,j\le d_n} e_{ij} \otimes e_{ij}.\end{equation}
Applying transposition on the second factor this is also equivalent to
\begin{equation}\label{37c}
\int \varphi_n \otimes {\psi^*_{n'}} \ dm=0 \text{ if } n\not=n' \text{ and }
\int \varphi_n \otimes  {\psi^*_{n} }\ dm=  d_n^{-1}\sum\n_{i,j\le d_n} e_{ij} \otimes e_{ji}.\end{equation}
Note that
\begin{equation}\label{37d}
\int \varphi_n \otimes  {\psi^*_{n} }\ dm=  d_n^{-1}\sum\n_{i,j\le d_n} e_{ij} \otimes e_{ji}\Leftrightarrow \forall 
a
\in M_{d_n}\quad 
\int \varphi_n a  {\psi^*_{n} }\ dm =  d_n^{-1}\tr(a) I ,\end{equation}
and    also
\begin{equation}\label{37e}
 \Leftrightarrow\forall 
a
\in M_{d_n}\quad 
\int \psi_n a  { \varphi^*_{n} }\ dm =  d_n^{-1}\tr(a) I .\end{equation}   
Thus, if both \eqref{31} and  the orthonormality  \eqref{32} 
 hold, then \eqref{33} and \eqref{37}  hold
for the choice $\psi_n=   {\varphi_n}$.
In any case, we will conclude from this
(see Corollary \ref{69'}) that   $\exists\alpha$ such that
for any   $(a_n)$ with $a_n\in M_{d_n}$ 
\begin{equation}\label{12}\sum {d_n} \tr |a_n|  \le \alpha \sup_{(t_1,t_2)\in T\times T}
|\sum {d_n} \tr (a_n \psi _n(t_1)\psi _n(t_2) )|
   .\end{equation}

  Let $U(d)$ denote the (compact) group of unitary $d\times d$ matrices.\\
  In the next Lemma,  we
   give   a   simple argument from \cite{MaPi} showing that 
   the family $\{u_k(i,j)\} $ ($k\ge 1$, $1\le i,j\le d_k$) is dominated by $\{g_k(i,j)\}$ ($k\ge 1$, $1\le i,j\le d_k$),  using an explicit positive operator $T$, bounded
   on $L_p$ for all $1\le p\le\infty$. By work due to
   Fig\`a-Talamanca and   Rider,
     this family has long been known to be subGaussian,   see \cite[\S 36, p. 390]{HR}. The idea in Lemma \ref{R72} was used in    \cite{MaPi} to give  a simpler proof of the latter fact.
   
  \begin{lem}\label{R72}  Let $(d_k)_{k\in I}$ be an arbitrary collection of integers.
   Let $G=\prod_{k\in I} U(d_k)$. Let $u\mapsto u_k$
   denote the coordinates on $G$, and $u_k(i,j)$  ($1\le i,j\le d_k$)
   the entries of $u_k$.
   Let $\{g_k(i,j)\}$ ($1\le i,j\le d_k$) be a collection of
   independent complex valued  Gaussian  random variables
  such that $\E(g_k(i,j))=0$ and $\E|g_k(i,j)|^2=1/d_k$, on a probability space
  $(\Omega,\P)$. For some $C_0>0$ there is  an operator
  $T: \ L_1(\Omega,\P) \to L_1(G,m_G)$ with
  $ \|T: \ L_p(\Omega,\P) \to L_p(G,m_G)\|\le C_0 $ for all $1\le p\le\infty$
  such that
  $$\forall k \forall  i,j\le d_k\quad T(g_k(i,j))= u_k(i,j).$$
\end{lem}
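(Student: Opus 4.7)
The plan is to build $T$ in two stages: first an explicit positive polar-decomposition operator $T_0$ with $T_0(g_k(i,j))=c_{d_k}u_k(i,j)$ for a scalar $c_d\in(0,1]$, then a correction via the duality in Proposition~\ref{p2} together with the Gaussian contraction principle that strips the factors $c_{d_k}$.

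For each $d$, use the polar decomposition $g=u(g)|g|$ of a $d\times d$ Gaussian matrix with law $\gamma_d$: $u(g)$ is Haar on $U(d)$ and independent of $|g|=(g^*g)^{1/2}$. Letting $\mu_d$ be the law of $|g|$, define
$$\tilde E_d(f)(u):=\int f(up)\,d\mu_d(p).$$
Since $(u,p)\mapsto up$ transports $m_{U(d)}\otimes\mu_d$ onto $\gamma_d$, both marginals of the associated kernel equal $1$, so $\tilde E_d:L_p(\gamma_d)\to L_p(m_{U(d)})$ is a positive $L_p$-contraction for every $p$. Rotational invariance of $\gamma_d$ forces $\E|g|=c_dI$ with $c_d:=d^{-1}\E\tr|g|$, whence $\tilde E_d(g(i,j))=c_du(i,j)$. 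The tensor product $T_0:=\bigotimes_k\tilde E_{d_k}$ is then a positive doubly stochastic $L_p$-contraction $L_p(\P)\to L_p(m_G)$ satisfying $T_0(g_k(i,j))=c_{d_k}u_k(i,j)$. A uniform lower bound $c_d\ge c_*>0$ comes from $\tr|g|\ge\|g\|_{\mathrm{HS}}^2/\|g\|_{\mathrm{op}}$ (since $\sigma_i\ge\sigma_i^2/\|g\|_{\mathrm{op}}$) and Cauchy--Schwarz, giving $c_d\ge(\E\|g\|_{\mathrm{HS}})^2/(d\,\E\|g\|_{\mathrm{op}})$, combined with $\E\|g\|_{\mathrm{HS}}\gtrsim\sqrt d$ (from $\E\|g\|_{\mathrm{HS}}^2=d$, $\mathrm{Var}\,\|g\|_{\mathrm{HS}}^2=1$, and Chebyshev) together with $\sup_d\E\|g\|_{\mathrm{op}}<\infty$ (classical Ginibre estimate).

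Now Proposition~\ref{p2}, direction (ii)$\Rightarrow$(i), applied to $T_0$ yields, for every $x_{k,i,j}\in\ell_\infty$ and every $p$,
$$\int\bigl\|\textstyle\sum c_{d_k}u_k(i,j)x_{k,i,j}\bigr\|_\infty^p\,dm_G\le\int\bigl\|\textstyle\sum g_k(i,j)x_{k,i,j}\bigr\|_\infty^p\,d\P.$$
Substituting $x_{k,i,j}\mapsto x_{k,i,j}/c_{d_k}$ clears the left-hand weights and introduces weights $1/c_{d_k}\le1/c_*$ on the right, which I would absorb by the Gaussian contraction principle applied to the independent Gaussians $(g_k(i,j))$ with Banach-valued coefficients (valid in every $L_p$). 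Feeding the resulting comparison
$$\int\|\textstyle\sum u_k(i,j)x_{k,i,j}\|_\infty^p\,dm_G\le c_*^{-p}\int\|\textstyle\sum g_k(i,j)x_{k,i,j}\|_\infty^p\,d\P$$
back into Proposition~\ref{p2} (i)$\Rightarrow$(ii) at every $p$ produces regular operators $L_p(\P)\to L_p(m_G)$ of norm $\le c_*^{-1}$ sending $g_k(i,j)$ to $u_k(i,j)$; since these agree on the dense subspace $\mathrm{span}\{g_k(i,j)\}$ and the $L_p$-bounds are uniform in $p$, they assemble into a single $T:L_1(\P)\to L_1(m_G)$ with $\|T:L_p\to L_p\|\le C_0:=c_*^{-1}$ for all $1\le p\le\infty$. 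The hardest part will be the uniform lower bound $c_d\ge c_*$: without it the constant $C_0$ would blow up with $\sup_k d_k$, so a careful Marchenko--Pastur/concentration argument (plus a direct check at small $d$) is required.
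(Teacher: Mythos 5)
Your first stage — polar decomposition $g=u(g)|g|$, conditional expectation onto the unitary factor, rotational invariance giving $\E|g|=c_dI$, and the uniform lower bound $c_d\ge c_*>0$ from $\sup_d\E\|g\|_{\mathrm{op}}<\infty$ — is essentially the paper's argument (the paper bounds $\d_k$ via $d_k^{1/2}c_1\le\E(\tr|g_k|^2)^{1/2}\le(\E\tr|g_k|)^{1/2}(\E\|g_k\|)^{1/2}$, but this is the same ingredient in a slightly different order). So up to $T_0$ with $T_0(g_k(i,j))=c_{d_k}u_k(i,j)$ you are in agreement with the source.

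The second stage does not work. You want to upgrade $T_0$ to $T$ by passing to norm inequalities (Proposition~\ref{p2}, (ii)$\Rightarrow$(i)), rescaling, using the contraction principle, and then invoking (i)$\Rightarrow$(ii) at each $p$ to produce a new operator. Two things break. First, $\mathrm{span}\{g_k(i,j)\}$ is emphatically \emph{not} dense in $L_p(\Omega,\P)$: it is a fixed Gaussian Hilbert space sitting inside $L_p$, with infinite-codimensional closure. So the claim that the operators obtained at different $p$ ``agree on a dense subspace'' and hence assemble into one $T$ is unfounded. Second, the direction (i)$\Rightarrow$(ii) of Proposition~\ref{p2} is a Hahn--Banach/duality construction; it produces \emph{some} regular operator at each fixed $p$, with no canonical choice and no reason for compatibility across different values of $p$. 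You would end up with a family $(T_p)_{1\le p\le\infty}$ of unrelated extensions, none of which is a priori simultaneously bounded on all $L_p$, which is precisely what the lemma demands (and what is needed downstream, e.g. for the truncation argument of Lemma~\ref{68}).

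The paper resolves this by an explicit, $p$-uniform correction, which is the idea you are missing. After $V_1$ (your $T_0$), set $\d=\inf_k\d_k>0$, let $\cl E_k$ be the conditional expectation on $L_1(G,m_G)$ given the coordinates $\{u_j\colon j\neq k\}$ (so $\cl E_k u_k=0$, $\cl E_k u_j=u_j$ for $j\neq k$), and put $W=\prod_k\bigl((1-\d/\d_k)\cl E_k+(\d/\d_k)\,\mathrm{Id}\bigr)$. Each factor is a convex combination of two positive $L_p$-contractions (for every $p$), hence an $L_p$-contraction, hence so is the infinite product (by a limiting argument); and $W(u_k(i,j))=(\d/\d_k)u_k(i,j)$. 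Then $T=\d^{-1}WV_1$ is a single positive operator, bounded on all $L_p$ by $\d^{-1}$, with $T(g_k(i,j))=u_k(i,j)$. (An equivalent fix on the domain side would be to precompose $T_0$ with a block Mehler operator $R(\theta)$, $\theta_k=c_*/c_{d_k}$, contractive on every $L_p$ and sending $g_k(i,j)\mapsto\theta_k g_k(i,j)$; either way one must construct an explicit multiplier bounded on all $L_p$, not appeal to a duality argument $p$ by $p$.)
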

  \begin{proof} 
  Let $g_k=v_k |g_k|$ be the polar decomposition of $g_k$.
  The key observation
is that $(v_k)$ and $(|g_k|)$ are independent random variables,
and that $(v_k)$ and $(u_k)$ have the same distribution.
Also for any fixed $v\in U(d_k)$, $ g_k$ has the same distribution as
  $(v g_kv ^{-1})$, and hence $( |g_k|)$ has the same distribution as
  $(v |g_k|v^{-1})$.
   Let $V$ denote the conditional expectation with respect to $(v_k)$ on $(\Omega, \P)$.
  Then $V( g_k)= v_k \E|g_k|$. Since $\E|g_k| $ commutes
  with any $v\in U(d_k)$, we have $\E|g_k| =\d_k I$
  for some $\d_k>0$. We claim that $\d=\inf\n_k \d_k>0$.
  Indeed, let  $c_1=d_k^{1/2}\E|g_k(i,j)|$. Note that $c_1$
  is independent of $(k,i,j)$.
  We have $(\sum\n_{i,j}(\E|g_k(i,j)|)^2)^{1/2} \le \E (\sum\n_{i,j}|g_k(i,j)|^2)^{1/2}=\E(\tr|g_k|^2)^{1/2}$ and hence
  $$ d_k^{1/2} c_1  \le \E(\tr|g_k|^2)^{1/2} \le \E (\tr |g_k|  \|g_k\|)^{1/2}
  \le  ( \E \tr |g_k| )^{1/2}(\E   \|g_k\|)^{1/2} = (\d_k d_k)^{1/2} (\E   \|g_k\|)^{1/2} $$
  and since, as is well known $\sup\n_k \E   \|g_k\|<\infty$,
  the claim $\d >0$ follows.\\
  Since $(v_k)$ and $(u_k)$ have the same distribution
  we can identify  $V$ to an operator $V_1:\  L_1(\Omega,\P)
 \to L_1(G,m_G)$ such that  $V_1( g_k(i,j))=   \d_k u_k(i,j)$.
  We will now modify $V_1$ to replace $\d_k$ by $\d$. Let 
  ${\cl E}_k:\ L_1(G,m_G)\to L_1(G,m_G)$ denote the conditional expectation with respect to the $\sigma$-algebra generated by the   coordinates
  $\{u_j\mid j\not= k\}$
   on $G$, so that ${\cl E}_k(u_j)=u_j$ if $ j\not= k$, and ${\cl E}_k(u_k)=0$.
    Let $Id$ denote the identity on $L_1(G,m_G)$.
  Recall $0<\d/\d_k\le 1$.  Then let $$W=\prod ( (1-\d/\d_k){\cl E}_k+ (\d/\d_k) Id ).$$ By a simple limiting argument, this infinite product makes sense and defines
an operator $W:\ L_p(G,m_G)\to L_p(G,m_G)$ with $\|W\|\le 1$
for any $1\le p\le\infty$,
such that $W(u_k(i,j))=(\d/\d_k) u_k(i,j)$.
Thus, setting $T= \d^{-1} WV_1$ we have
$$T(g_k(i,j))=\d^{-1} W(V_1(g_k(i,j)) ) =  u_k(i,j),$$
and $\|T:\ L_p(G,m_G)\to L_p(G,m_G)\|\le C_0=\d^{-1}$.
In addition, note that $T$ is actually positive.
  \end{proof}
  The following basic fact  compares the notions
 of randomly Sidon for $(g_k)$ and $(u_k)$. It is proved by the same truncation trick
 that was used in \cite{Pi}. See \cite[Chap.V and VI]{MaPi} for further details and more general facts.   
 \def\z{u}
 \begin{lem}\label{68} Let $\psi_n\in L_\infty(m; M_{d_n})$ ($n\ge 1$)
be an arbitrary matricial system satisfying \eqref{33}.
The following are equivalent:
\begin{itemize}
\item[(i)] There is a constant $\alpha_1$ such that 
for any $n$ and any $x_k\in M_{d_k}$ 
$$\sum d_k \tr|x_k| \le \alpha_1 \E\| \sum d_k \tr(x_k g_k  \psi_k)\|_\infty.$$
\item[(ii)] There is a constant $\alpha_2$ such that 
for any $n$ and any $x_k\in M_{d_k}$ 
$$\sum d_k \tr|x_k| \le \alpha_2 \int \| \sum d_k \tr( x_k \z_k  \psi_k)\|_\infty m_G(d\z).$$
where $\z=(\z_n)$ denotes (as before) a random sequence of
unitaries uniformly distributed  in $  G= \prod\n_{n\ge 1} U(d_n)$.
\end{itemize}
  \end{lem}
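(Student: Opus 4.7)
I split the equivalence into the two implications.

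For the easy direction (ii) $\Rightarrow$ (i): apply Lemma \ref{R72}. It produces a positive operator $T\colon L_1(\Omega,\P)\to L_1(G,m_G)$ bounded on every $L_p$ ($1\le p\le\infty$) by a constant $C_0$, with $T(g_k(i,j))=u_k(i,j)$. Positivity gives $\|T\|_{\mathrm{reg}}=\|T\|\le C_0$, so Proposition \ref{p2} (with $p=1$ and target Banach space $B=L_\infty(T,m)$, applied to the $B$-valued coefficients $(d_k\psi_k(j,i))_{k,i,j}$) yields the comparison
$$\E_u\bigl\|\sum\nolimits_k d_k\tr(x_ku_k\psi_k)\bigr\|_\infty\le C_0\,\E_g\bigl\|\sum\nolimits_k d_k\tr(x_kg_k\psi_k)\bigr\|_\infty.$$
Chaining with the inequality asserted by (ii) gives (i) with $\alpha_1=C_0\alpha_2$.

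For the harder direction (i) $\Rightarrow$ (ii): I use the truncation trick of \cite{Pi}. Write $g_k=|g_k^*|v_k$ for the polar decomposition, with $v_k$ Haar on $U(d_k)$ independent of $|g_k^*|$ (so $v_k\stackrel{d}{=}u_k$), and recall from the proof of Lemma \ref{R72} that $\E|g_k^*|=\delta_kI$ with $\delta:=\inf_k\delta_k>0$ (and clearly $\delta_k$ is bounded above by a universal constant). Splitting $|g_k^*|=\delta_kI+(|g_k^*|-\delta_kI)$, the triangle inequality in $L_\infty(T,m)$ yields
$$\E_g\bigl\|\sum\nolimits_k d_k\tr(x_kg_k\psi_k)\bigr\|_\infty\le\E_u\bigl\|\sum\nolimits_k d_k\delta_k\tr(x_ku_k\psi_k)\bigr\|_\infty+R,$$
where $R=\E\bigl\|\sum_k d_k\tr(x_k(|g_k^*|-\delta_kI)v_k\psi_k)\bigr\|_\infty$. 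The central technical step is the remainder estimate
$$R\le\eta\sum\nolimits_k d_k\tr|x_k|,\qquad \text{for some } \eta<\alpha_1^{-1},$$
obtained by a further spectral truncation of $|g_k^*|$ at a suitable level, using the uniform bound \eqref{33} on $\psi_k$ together with the concentration of $|g_k^*|$ around $\delta_kI$. Combined with the hypothesis (i), the estimate on $R$ is absorbed to the left side, and after the substitution $y_k=\delta_k x_k$ (reversible since $\delta\le\delta_k$ is bounded away from $0$ and from $\infty$) one obtains (ii) with $\alpha_2$ bounded in terms of $\alpha_1$, $\delta$ and $C'$.

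The main obstacle is the quantitative remainder estimate on $R$; this is the matrix analogue of the truncation-and-absorption scheme carried out in the Abelian case in \cite[Chap.~V and VI]{MaPi}, where the role played there by the $L_\infty$-bound on characters is played here by the uniform bound \eqref{33} on $\psi_k$.
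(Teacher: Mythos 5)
Your (ii) $\Rightarrow$ (i) direction is correct and matches the paper: Lemma \ref{R72} gives a bounded (indeed positive) operator $T$ sending $g_k(i,j)$ to $u_k(i,j)$, and Proposition \ref{p2} converts this into the comparison $\int\|\sum d_k\tr(x_k u_k\psi_k)\|_\infty\,dm_G\le C_0\,\E\|\sum d_k\tr(x_k g_k\psi_k)\|_\infty$, which chains with (ii) to give (i). (Incidentally, positivity is not needed here; for $p=1$ one always has $\|T\|_{\mathrm{reg}}=\|T\|$.)

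The direction (i) $\Rightarrow$ (ii), however, has a genuine gap. You decompose $g_k=|g_k^*|v_k$, write $|g_k^*|=\delta_k I+(|g_k^*|-\delta_k I)$, and claim the remainder
\begin{equation*}
R=\E\Bigl\|\sum\nolimits_k d_k\tr\bigl(x_k(|g_k^*|-\delta_k I)v_k\psi_k\bigr)\Bigr\|_\infty
\end{equation*}
satisfies $R\le\eta\sum d_k\tr|x_k|$ with $\eta<\alpha_1^{-1}$, invoking ``the concentration of $|g_k^*|$ around $\delta_k I$.'' But there is no such concentration in operator norm: even for a single $k$, the random variable $\||g_k^*|-\delta_k I\|$ is of order $1$ with high probability. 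For large $d_k$ the empirical spectral distribution of $|g_k^*|$ approaches the quarter-circle law on $[0,2]$ while $\delta_k\to 8/(3\pi)\approx 0.85$, so $\||g_k^*|-\delta_k I\|\to\max(2-\delta_k,\delta_k)>1$; for $d_k=1$ the fluctuation of $|g_k|$ around its mean is likewise of order $1$. Consequently no ``spectral truncation'' can make the term $(|g_k^*|-\delta_k I)v_k$ uniformly small, and the absorption step breaks down.

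The paper's proof of (i) $\Rightarrow$ (ii) does \emph{not} attempt to make a remainder small compared to $\sum d_k\tr|x_k|$ alone. Instead it introduces an \emph{independent} Haar unitary $u_k$ so that $g_k\stackrel{d}{=}u_k g_k$, truncates $g_k$ at a \emph{large} level $c_5$, and splits
\begin{equation*}
\E\Bigl\|\sum d_k\tr(x_k u_k g_k\psi_k)\Bigr\|_\infty
\le\E\Bigl\|\sum d_k\tr\bigl(x_k u_k g_k 1_{\{\|g_k\|\le c_5\}}\psi_k\bigr)\Bigr\|_\infty
+\E\Bigl\|\sum d_k\tr\bigl(x_k u_k g_k 1_{\{\|g_k\|> c_5\}}\psi_k\bigr)\Bigr\|_\infty .
\end{equation*}
The first term is controlled by the contraction principle (Remark \ref{90}) to give $c_5\E_u\|\sum d_k\tr(x_k u_k\psi_k)\|_\infty$ — this is the \emph{main} term, not a negligible remainder; the second term is bounded by $\sum d_k\tr|x_k|\cdot C'\cdot\sup_k\E(\|g_k\|1_{\{\|g_k\|>c_5\}})$, which is made $\le(2\alpha_1)^{-1}\sum d_k\tr|x_k|$ by choosing $c_5$ large, using the uniform second-moment bound $\sup_k\E\|g_k\|^2<\infty$ and Chebyshev. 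Only this second piece gets absorbed. Your decomposition around $\delta_k I$ cannot be repaired into this scheme because it produces a part that is neither bounded nor of small expectation.
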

  \begin{proof}[Sketch]  
  From Lemma \ref{R72} it is easy to deduce that
  $$  \int \| \sum d_k \tr( x_k  \z_k \psi_k)\|_\infty m_G(d\z)
  \le C_0\E\| \sum d_k \tr(x_k g_k  \psi_k)\|_\infty,$$
  and hence (ii) $ \Rightarrow$ (i).
 To check the converse, recall the well known fact that 
 $c_4=\sup \E \|g_n\|^2 <\infty,$
 from which it is easy to deduce by Chebyshev's inequality that there
 exists   $c_5>0$ such that
 $$\sup \E(\|g_n\| 1_{\{\|g_n\|> c_5\}}  \le (2\alpha_1C')^{-1}.$$
  We may assume that the sequences $(\z_n)$
  and $(g_n)$ are mutually independent.
  Then the sequences $(g_n)$ and $(\z_n g_n)$
   have the same distribution. 
Then by the triangle inequality  and by Remark \ref{90}
  $$\E\| \sum d_k \tr(x_k g_k  \psi_k)\|_\infty
  =\E\| \sum d_k \tr(x_k u_k g_k  \psi_k)\|_\infty$$
  $$
  \le \E\| \sum d_k \tr(x_k u_k g_k 1_{\{\|g_k\|\le c_5\}} \psi_k)\|_\infty
  +\E\| \sum d_k \tr(x_k u_k g_k 1_{\{\|g_k\|> c_5\}} \psi_k)\|_\infty$$
  $$ \le c_5 \E\| \sum d_k \tr(x_k \z_k   \psi_k)\|_\infty
  + (2\alpha_1C')^{-1}  \sum d_k \tr|x_k| \| \psi_k  \|_\infty \qquad\qquad\qquad\ \ $$
  $$ \le c_5 \E\| \sum d_k \tr(x_k \z_k   \psi_k)\|_\infty
  + (2\alpha_1 )^{-1}  \sum d_k \tr|x_k|  . \qquad\qquad\qquad\qquad\qquad\ 
  $$
  Using this we see that 
  (i)  implies
   $$\sum d_k \tr|x_k| \le \alpha_1c_5 \E\| \sum d_k \tr(x_k \z_k   \psi_k)\|_\infty
  +(1/2)  \sum d_k \tr|x_k|  , $$
  and hence (i) $ \Rightarrow$ (ii) with $\alpha_2\le 2\alpha_1c_5$.
    \end{proof}
\begin{dfn}\label{d3'} Let $(\varphi_n)$ be a sequence  with $\varphi_n\in L_\infty(T,m; M_{d_n})$ for all $n$
and let $C>0$.
\begin{itemize}
\item[(i)]  We say that    $(\varphi_n)$
  is   Sidon  with constant $C$
if for any $n$ and any   sequence $(x_k)$ with $x_k\in M_{d_k}$ we have
$$\sum\n_1^n d_k \tr |x_k|\le C \|\sum\n_1^n d_k \tr (x_k \varphi_k)\|_\infty.$$

\item[(ii)]   We say that $(\varphi_n)$ is randomly Sidon   with constant $C$
if for any $n$ and any $x_k\in M_{d_k}$ we have
$$\sum\n_1^n d_k \tr |x_k|\le C \E\|\sum\n_1^n d_k \tr ( x_k g_k \varphi_k)\|_\infty.$$
By Lemma \ref{68} this is equivalent to the previous definition
with random unitaries  $(u_k)$ in place of $(g_k)$.\\
If this holds only for scalar matrices (i.e. for $x_k\in \CC I_{d_k}$)
we say that $(\varphi_n)$ is randomly central  Sidon   with constant $C$.
\item[(iii)]   Let $k\ge 1$.
We say that $(\varphi_n)$ is ${\dot\otimes}^k$-Sidon   with constant $C$
if the system $\{\varphi_n(t_1)\cdots \varphi_n(t_k)\}$ 
  is Sidon 
with constant $C$.\\
We say that $(\varphi_n)$ is randomly ${\dot\otimes}^k$-Sidon   with constant $C$
if $\{\varphi_n(t_1)\cdots \varphi_n(t_k)\}$ 
  is randomly Sidon 
with constant $C$.
\end{itemize}
Now assume merely that $\{\varphi_n\}\subset L_2(T,m)$.
\begin{itemize}
\item[(iv)] 
We say that $(\varphi_n)$ is subGaussian with constant $C$
(or $C$-subGaussian) if   
for any $n$ and any complex sequence $(x_k)$ 
we have
$$\| \sum\n_1^n  d_k\tr( x_k \varphi_k)\|_{{\psi_2}(m)} \le C(\sum\n_1^n d_k\tr|x_k|^2)^{1/2}.$$

\end{itemize}
\end{dfn}
  \begin{rem}\label{bat2} Using $(u_k)$ for the randomization it is clear
  that Sidon implies randomly Sidon (with at most the same constant).
  A fortiori, $\dot\otimes^k$-Sidon implies randomly $\dot\otimes^k$-Sidon.
  \end{rem}
  \begin{rem} We should emphasize that central  Sidon
  does not imply randomly central  Sidon, in contrast with the preceding remark.
  \end{rem}
As earlier, we will consider
the following more general form
of the assumption \eqref{11}:
 \begin{equation}\label{11nb}\text{There are } C>0 
 \text{ and   }  u:\ L_1(\P)\to L_1(m)    
  \text{ such  that } \|u\|\le C \text{ and }
 \forall n,i,j \ \ u(g_n(i,j) )=\varphi_n(i,j) . \end{equation}
 In other words, the $\varphi_n$'s are  entrywise
 $C$-dominated by the $g_n$'s.
 
 Here again, Talagrand's inequality \eqref{3} is crucial. 
 Restated 
in the present context:
 \begin{thm}\label{87} For any matricial system
 $(\varphi_n)$, 
 \eqref{11} $\Rightarrow$  \eqref{11nb} (possibly with a different   $C$).
 \end{thm}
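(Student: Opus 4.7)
The plan is to deduce Theorem \ref{87} directly from its scalar counterpart Theorem \ref{p8} by a simple re-indexing/normalization argument: the matricial subGaussian condition \eqref{11} is a scalar subGaussian condition in disguise, applied to the family of normalized entries $\{d_n^{1/2}\varphi_n(i,j)\}$.

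Concretely, I would first unpack \eqref{11}. Expanding the trace,
\[
\sum_n d_n \tr(y_n\varphi_n) = \sum_{n,i,j} d_n (y_n)_{ij}\varphi_n(j,i),
\]
and setting $\tilde\varphi_{n,i,j} = d_n^{1/2}\varphi_n(i,j)$ and $\tilde z_{n,i,j} = d_n^{1/2}(y_n)_{j,i}$, one checks
\[
\sum_n d_n \tr(y_n\varphi_n) = \sum_{n,i,j}\tilde z_{n,i,j}\tilde\varphi_{n,i,j}, \qquad \sum_n d_n \tr|y_n|^2 = \sum_{n,i,j}|\tilde z_{n,i,j}|^2.
\]
Thus \eqref{11} says exactly that the re-indexed scalar system $\{\tilde\varphi_{n,i,j}\}$ satisfies the scalar subGaussian estimate \eqref{1} with the same constant $C$. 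By the normalization $\|d_n^{1/2} g_n(i,j)\|_2 = 1$ fixed at the start of \S\ref{s3}, the rescaled Gaussian family $\{\tilde g_{n,i,j}\} := \{d_n^{1/2}g_n(i,j)\}$ is an i.i.d.\ sequence of standard complex Gaussians, hence plays precisely the role of $(g_n)$ in Theorem \ref{p8}.

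Applying Theorem \ref{p8} to $\{\tilde\varphi_{n,i,j}\}$ yields a bounded operator $u: L_1(\P)\to L_1(m)$ with $\|u\| \le \tau_0 C$ such that
\[
u(\tilde g_{n,i,j}) = \tilde\varphi_{n,i,j}, \quad \text{i.e.}\quad u(d_n^{1/2} g_n(i,j)) = d_n^{1/2}\varphi_n(i,j) \ \ \text{for all } n,i,j.
\]
Dividing by $d_n^{1/2}$ and using linearity gives $u(g_n(i,j)) = \varphi_n(i,j)$, which is exactly the entrywise domination \eqref{11nb}. There is no real obstacle: once one notices that the factors $d_n$ in the definition of matricial subGaussianity are chosen precisely to make the entries $\{d_n^{1/2}\varphi_n(i,j)\}$ into a scalar subGaussian system against a standardized Gaussian system $\{d_n^{1/2}g_n(i,j)\}$, the matricial statement is a literal rebranding of the scalar one, with the only care required being bookkeeping of the $d_n^{1/2}$ factors. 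Equivalently, one could bypass Theorem \ref{p8} and invoke Talagrand's Lemma \ref{lem1} together with Proposition \ref{p2} directly on the re-indexed scalar system; the argument is identical.
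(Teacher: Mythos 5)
Your proof is correct and is exactly the argument the paper has in mind: the sentence immediately preceding Theorem \ref{87} (``Here again, Talagrand's inequality \eqref{3} is crucial. Restated in the present context:'') signals that Theorem \ref{87} is nothing but Theorem \ref{p8} reread for the scalar system $\{d_n^{1/2}\varphi_n(i,j)\}$ against the standard Gaussians $\{d_n^{1/2}g_n(i,j)\}$, which is precisely your re-indexing.
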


 \noindent{\bf Notation}. Let $(T_1,m_1)$
 and $(T_2,m_2)$ be probability spaces.
 Let $\psi^1 \in L_\infty(T_1,m_1; M_d)$,
 $\psi^2 \in L_\infty(T_2,m_2; M_d)$.
 We denote by
 $$\psi^1 \dot\otimes \psi^2\in L_\infty(T_1\times T_2,m_1 \times m_2; M_d)$$
 the function defined on $T_1\times T_2$ by
 $$\psi^1 \dot\otimes \psi^2(t_1,t_2)= \psi^1 (t_1) \psi^2 (t_2).$$
 We now state the matricial generalization of Corollary \ref{cbl}.
     
\begin{thm}\label{t2} Assuming \eqref{33} and \eqref{37},
 we have $\eqref{11} \Rightarrow \eqref{12} .$\\
More generally,  given two systems $(\varphi^1_n),(\varphi^2_n)$
 satisfying \eqref{11nb} with respective constants $C_1,C_2$, and two systems $(\psi^1_n),(\psi^2_n)$
 satisfying \eqref{33}  with respective constants $C'_1,C'_2$ and  such that the pairs
 $(\varphi^1_n),(\psi^1_n)$ and  $(\varphi^2_n),(\psi^2_n)$
 satisfy 
 \eqref{37}, 
 the system $( \psi^1 _n \dot\otimes\psi^2 _n  )$ is Sidon with
 a constant depending only on 
  $C_1,C_2,C'_1,C'_2 $.  
  \end{thm}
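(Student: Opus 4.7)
The plan is to follow the scheme of the scalar Corollary \ref{cbl}, re-running the proof of Theorem \ref{nt1} in the operator form of Remark \ref{melo} so that, in place of the scalar estimate $\|r\|_{\vee}\le\delta$, one extracts a matrix-valued bound that can absorb the $d_n$-factor in $\sum d_n\tr|x_n|$. After rescaling, assume $C_1=C_2=1$; the domination \eqref{11nb} gives $u_s\colon L_1(\P)\to L_1(m_s)$ of norm $\le 1$ with $u_s(g_n(i,j))=\varphi^s_n(i,j)$. For each $x_n\in M_{d_n}$ pick the partial isometry $y_n$ from the polar decomposition $x_n=y_n|x_n|$, so $\|y_n\|\le 1$ and $\tr(x_n y_n^*)=\tr|x_n|$. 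Let $f(t_1,t_2)=\sum_n d_n\tr(x_n\psi^1_n(t_1)\psi^2_n(t_2))$ and form the dual witness $G=\sum_n d_n\tr(y_n\varphi^1_n\varphi^2_n)\in L_1(m_1)\otimes L_1(m_2)$. In the orthonormal scaling $\tilde\varphi^s_n=d_n^{1/2}\varphi^s_n$, the coefficient matrix of $G$ is block-diagonal in $n$ with each block being right multiplication by $y_n$ on $M_{d_n}$, hence a contraction on $\ell_2$, and the proof of Theorem \ref{nt1} produces a decomposition $G=t+r$ with $\|t\|_{\wedge}\le w_0(\delta)$; the remainder $r$ is the kernel of $M=-u_1 J_1 T_\delta(1-P_1)J_2 u_2^*\colon L_\infty(m_2)\to L_1(m_1)$, which factors through $L_2(\P)$ as $M=A\circ B$ with $A=u_1 J_1$ and $B=T_\delta(1-P_1)J_2 u_2^*$.

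Biorthogonality \eqref{37} (one factor $d_n^{-1}$ for each of the pairs $(\psi^1,\varphi^1)$ and $(\psi^2,\varphi^2)$, matched against the $d_n^2$ coming from $f\cdot G$) gives $\int f\overline{G}\,dm_1\,dm_2=\sum_n d_n\tr(x_n y_n^*)=\sum_n d_n\tr|x_n|$. Splitting $\overline{G}=\overline{t}+\overline{r}$, the $t$-part is controlled by $|\langle f,\overline{t}\rangle|\le \|t\|_{\wedge}\|f\|_\infty\le w_0(\delta)\|f\|_\infty$, so the task reduces to proving $|\langle f,\overline{r}\rangle|\le \delta C'_1 C'_2\sum_n d_n\tr|x_n|$.

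Writing $\langle f,\overline{r}\rangle=\sum_n d_n\tr(x_n R_n)$ with $R_n=\int\psi^1_n(t_1)\psi^2_n(t_2)\overline{r}\,dm_1\,dm_2\in M_{d_n}$, I claim $\|R_n\|_{M_{d_n}}\le\delta C'_1 C'_2$. Given unit vectors $u,v\in\CC^{d_n}$, set $a_k(t_1)=(v^*\psi^1_n(t_1))_k$ and $b_k(t_2)=(\psi^2_n(t_2)u)_k$, so that $v^* R_n u=\sum_{k=1}^{d_n}\langle B b_k,\, A^* a_k\rangle_{L_2(\P)}$ (modulo conjugations that do not affect norms). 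Cauchy--Schwarz in $L_2(\P;\ell_2^{d_n})$ yields $|v^* R_n u|\le \|B\otimes I_{\ell_2}\|\cdot\|A^*\otimes I_{\ell_2}\|\cdot \sup_{t_1}\|v^*\psi^1_n(t_1)\|_{\ell_2}\cdot\sup_{t_2}\|\psi^2_n(t_2)u\|_{\ell_2}$. The Hermite chaos decomposition $T_\delta(1-P_1)=\sum_{k\ge 2}c_k P_k$ with $|c_k|\le \delta$ applied coordinate-wise gives $\|T_\delta(1-P_1)\otimes I_{\ell_2}\colon L_2(\P;\ell_2)\to L_2(\P;\ell_2)\|\le\delta$; combined with regularity of $u_s$ on $L_1$/$L_\infty$ and $\|J_s\|\le 1$, this yields $\|B\otimes I_{\ell_2}\|\le\delta$ and $\|A^*\otimes I_{\ell_2}\|\le 1$, while $\|v^*\psi^1_n(t_1)\|_{\ell_2}\le\|\psi^1_n(t_1)\|_{M_{d_n}}\le C'_1$ (and analogously for $b_k$). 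Hence $\|R_n\|\le\delta C'_1 C'_2$, and choosing $\delta C'_1 C'_2=1/2$ delivers \eqref{12} with $\alpha=2w_0(\delta)$.

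The main obstacle is precisely this matrix-valued estimate on $R_n$: a naive expansion of $\overline{r}$ against the $\sim d_n^3$ scalar rank-one terms $\psi^1_n(j,k)\otimes\psi^2_n(k,i)$ using only $\|r\|_{\vee}\le\delta$ and $|\psi^s_n(i,j)|\le C'_s$ produces a useless factor of order $d_n^3\delta C'_1 C'_2$, so the Hilbert-space factorization $M=AB$ together with the $\ell_2$-valued boundedness of the Mehler factor $T_\delta(1-P_1)$ is essential to recover the right matricial scaling.
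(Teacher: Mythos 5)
Your proposal is correct and follows essentially the same route as the paper. The matricial bound $\|R_n\|\le\delta C_1'C_2'$ that you derive by Cauchy--Schwarz in $L_2(\mathbb{P};\ell_2^{d_n})$ using the factorization $M=AB$ through $L_2(\mathbb{P})$ is precisely the content of Lemma~\ref{nl1} combined with the $\gamma_2^*$-estimate of Remark~\ref{nr1} (namely $\gamma_2^*(r)\le\|T_\delta(1-P_1):L_2\to L_2\|\,\|u_1\|\,\|u_2\|$), and your choice of the polar isometry $y_n$ at the outset is the same as the paper's device of testing against arbitrary contractions $V_n$ and taking the supremum at the end.
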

 Let $v\in L_1\otimes L_1$.
We denote 
$$\gamma_2^*(v)=\sup\{ |\sum\nolimits_1^N \langle v, x_j\otimes y_j \rangle|
\mid \sum\nolimits_1^N x_j \otimes y_j\in L_\infty\otimes L_\infty,
\| (\sum\nolimits_1^N |x_j|^2)^{1/2}  \|_\infty \|  (\sum\nolimits_1^N |y_j|^2)^{1/2} \|_\infty \le 1 \ \}.$$
Theorem \ref{t2} will be deduced rather easily from Theorem \ref{nt1}
using the following simple fact.

\begin{lem}\label{nl1}
Let $v\in L_1(m_1)\otimes L_1(m_2)$ be a tensor
such that $\gamma_2^*(v)\le 1$. Let
$x\in M_d$ and let $\psi^j\in L_\infty(m_j; M_d)$.
Let $f(t_1,t_2)=\tr(a\psi^1\psi^2)\in L_\infty(m_1\times m_2)$.
Then
$$|\langle v, f\rangle|\le  \tr|a| \|  \psi^1  \|_{L_\infty(m_1; M_d)}
\|  \psi^2  \|_{L_\infty(m_2; M_d)}.$$
\end{lem}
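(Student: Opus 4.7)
The plan is to exhibit an explicit representation of $f$ as an element of $L_\infty(m_1)\otimes L_\infty(m_2)$ built from only $d^2$ elementary tensors, and then bound the row/column quantity appearing in the definition of $\gamma_2^*(v)$ directly.

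First I would use the polar decomposition $a=w|a|$ in $M_d$, split $|a|=|a|^{1/2}|a|^{1/2}$, and invoke cyclicity of the trace to write
\begin{equation*}
f(t_1,t_2)=\tr\bigl(a\,\psi^1(t_1)\psi^2(t_2)\bigr)=\tr\bigl(\beta(t_2)\alpha(t_1)\bigr)=\sum_{i,j=1}^{d}\alpha_{ji}(t_1)\,\beta_{ij}(t_2),
\end{equation*}
where $\alpha(t_1):=|a|^{1/2}\psi^1(t_1)$ and $\beta(t_2):=\psi^2(t_2)\,w\,|a|^{1/2}$. The scalar functions $\alpha_{ji}\in L_\infty(m_1)$ and $\beta_{ij}\in L_\infty(m_2)$ produce a legitimate element of $L_\infty\otimes L_\infty$ to feed into the definition of $\gamma_2^*$.

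Next I would bound the pointwise $\ell_2$-column/row sums of these entries using the elementary trace inequality $\tr(XY)\le\|X\|_{M_d}\tr(Y)$ for $X,Y\ge 0$. By cyclicity,
\begin{equation*}
\sum_{i,j}|\alpha_{ji}(t_1)|^2=\tr\bigl(\alpha(t_1)^*\alpha(t_1)\bigr)=\tr\bigl(|a|\,\psi^1(t_1)\psi^1(t_1)^*\bigr)\le \|\psi^1(t_1)\|^2\,\tr|a|,
\end{equation*}
and symmetrically $\sum_{i,j}|\beta_{ij}(t_2)|^2\le\|\psi^2(t_2)\|^2\,\tr|a|$; note that $w^*w$ being a contraction is used for the $\beta$-estimate. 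Passing to essential suprema yields
\begin{equation*}
\bigl\|(\textstyle\sum_{i,j}|\alpha_{ji}|^2)^{1/2}\bigr\|_\infty\bigl\|(\textstyle\sum_{i,j}|\beta_{ij}|^2)^{1/2}\bigr\|_\infty\le \tr|a|\cdot\|\psi^1\|_{L_\infty(m_1;M_d)}\|\psi^2\|_{L_\infty(m_2;M_d)}.
\end{equation*}

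Finally, applying the definition of $\gamma_2^*$ to the normalized representation $f=\sum_{i,j}\alpha_{ji}\otimes\beta_{ij}$ gives at once
\begin{equation*}
|\langle v,f\rangle|\le \gamma_2^*(v)\cdot\tr|a|\cdot\|\psi^1\|_\infty\|\psi^2\|_\infty\le \tr|a|\cdot\|\psi^1\|_\infty\|\psi^2\|_\infty.
\end{equation*}
I do not expect any substantial difficulty: the content is simply that the bilinear form $(\psi^1,\psi^2)\mapsto\tr(a\psi^1\psi^2)$ factors through the Hilbert space $M_d$ via polar decomposition, with the two factors enjoying Hilbert--Schmidt-type bounds of order $(\tr|a|)^{1/2}\|\psi^j\|_\infty$. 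The only place requiring any care is keeping the polar factor $w$ on the correct side when estimating $\beta$, which is handled by the cyclicity step above.
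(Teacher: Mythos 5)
Your proof is correct and takes essentially the same route as the paper: both split $a$ via polar decomposition into two Hilbert--Schmidt factors of norm $(\tr|a|)^{1/2}$, rewrite $f$ as a sum of $d^2$ elementary tensors by expanding a trace, and feed that representation into the definition of $\gamma_2^*$ with the estimate $(\tr|X\psi|^2)^{1/2}\le(\tr|X|^2)^{1/2}\|\psi\|_{M_d}$. The only difference is notational: the paper writes $a=a_2a_1$ with $\tr|a_1|^2=\tr|a_2|^2=\tr|a|$ in the abstract, whereas you instantiate $a_1=|a|^{1/2}$, $a_2=w|a|^{1/2}$ explicitly.
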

\begin{proof} 
We may assume (by polar decomposition)  $a=a_2a_1$ with 
$\tr|a_1|^2=\tr|a_2|^2= \tr|a|$.
  Then 
$f={\tr([a_1 \psi^1] [\psi^2 a_2])}= \sum\nolimits_{k,\ell} (a_1\psi^1)(\ell,k) \otimes (\psi^2 a_2)(k,\ell)$.
Then
$$\langle v, f\rangle  =
\sum\nolimits_{k,\ell}      \langle v, (a_1\psi^1)(\ell,k) \otimes       (\psi^2 a_2)(k,\ell)   \rangle $$
and hence since $\gamma_2^*(v)\le 1$
$$|\langle v, f\rangle |\le
\|(\sum\nolimits_{k,\ell }|    (a_1\psi^1)(\ell,k)  |^2 )^{1/2}\|_\infty
\|(\sum\nolimits_{k,\ell }|  (\psi^2 a_2)(k,\ell)   |^2 )^{1/2} \|_\infty
$$
 but we have $$(\sum\nolimits_{k,\ell }|    (a_1\psi^1)(\ell,k)  |^2 )^{1/2}=(\tr|a_1\psi^1|^2)^{1/2} \le (\tr|a_1|^2)^{1/2}\|\psi^1\|_{M_d} $$
 and similarly for $\psi^2 a_2$. Thus we obtain
 $$|\langle v, f\rangle |\le (\tr|a_1|^2)^{1/2}(\tr|a_2|^2)^{1/2} \|\psi^1\|_{L_\infty(M_d)}  \|\psi^2\|_{L_\infty(M_d)} =(\tr|a|) \|\psi^1\|_{L_\infty(M_d)}  \|\psi^2\|_{L_\infty(M_d)}$$ proving the Lemma.
\end{proof}
\begin{rem}\label{nr1} By Grothendieck's
well known inequality (see e.g. \cite[Th. 2.1]{Pig})
we have $\gamma_2^*(v) \le K_G \|v\|_\vee$ for any $v\in L_1(m_1)\otimes L_1(m_2)$. Thus in Theorem \ref{nt1} we have
$\gamma_2^*(r) \le K_G \|r\|_\vee\le  K_G \d$.
But actually a close examination (see Remark \ref{melo}) shows
that we directly obtain a bound for $\gamma_2^*(r)$
without recourse to Grothendieck's theorem. Indeed, with the notation of the proof of Theorem \ref{nt1}, one has 
$\gamma_2^*(R)  \le  \|T_\d(I-P_1):\ L_2\to L_2\|\|u_1\|\|u_2\|.$
\end{rem}
\begin{rem} In case the reader is wondering about that, the general definition
of the $\gamma_2^*$-norm for an element $r $ in  the algebraic tensor  product $X\otimes Y$ of two Banach spaces
is $$\gamma_2^*=\inf\{ \|a\|_{M_n}(\sum\nolimits_1^n \|x_j\|^2)^{1/2} (\sum\nolimits_1^N \|y_j\|^2)^{1/2} \},$$
where the infimum runs over all possible ways to write $r$
as $r=\sum\nolimits_{1\le i,j\le n} a_{ij} x_i\otimes y_j $ 
($n\ge 1$, $x_i\in X$, $y_j\in Y$, $a_{ij}\in \CC$). When $X=Y=L_1$
this is identical to the preceding definition (see \cite{Pig}).
\end{rem}
\begin{proof}[Proof of   Theorem \ref{t2}] 
We will apply
Theorem \ref{nt1}.\\
By homogeneity, we may assume that $(\varphi^1_n)$
and $(\varphi^2_n)$ satisfy \eqref{11nb} with $C_1=C_2=1$
(then $C'_j$ is replaced by $C_jC'_j$ and $\psi^j_n$ by $C_j\psi^j_n$).
Let $V_n$ be arbitrary in the unit ball of $M_{d_n}$.
Consider the tensor
$$S=\sum\nolimits_n d_n \sum\nolimits_{i,k,\ell}   V_n(i,k) \varphi^1_n(k,\ell) \otimes \varphi^2_n(\ell,i),$$
which roughly could be written as $\sum\nolimits_n d_n \tr(V_n\varphi^1_n\varphi^2_n)$ using tensor product to form the products of 
matrix coefficients in $\varphi^1_n$ and $ \varphi^2_n$.
Let $\varphi'^1_n=V_n \varphi^1_n$ (this denotes product of the scalar matrix
$V_n$ by the $L_1$-valued matrix $\varphi^1_n$).
Note that,
by \eqref{c2} 
(and Proposition \ref{p2}) applied to the standard normal family $\{d_n^{1/2}\varphi^1_n(i,j)\mid 1\le n\le N, i,j\le d_n\}$,
 if we replace $(\varphi^1_n)$ by $(\varphi'^1_n)$, then 
\eqref{11nb}    
still holds.
 This gives us
$$S=\sum\nolimits_n  \sum\nolimits_{i, \ell}   [d^{1/2}_n \varphi'^1_n(i,\ell) ]\otimes [d^{1/2}_n\varphi^2_n(\ell,i)].$$
By   
Theorem \ref{nt1} (actually we could invoke Corollary \ref{cor1}), and
using   Remark \ref{nr1},
this shows that we have a decomposition
$S=t+r$ with $\|t\|_\wedge\le w(\d)$ and
$\gamma_2^*(r) \le \d$.
Now
let $f=\sum d_n \tr(a_n \psi^1_n  \psi^2_n)\in L_\infty(m_1)\otimes L_\infty(m_2)$, or more explicitly
$$f=\sum\nolimits_n d_n \sum\nolimits_{i,k,\ell}  a_n(i,k)  \psi^1_n(k,\ell)  \otimes \psi^2_n(\ell,i).$$
Recalling \eqref{37} (denoting simply $\|f\|_\infty=
\|f\|_{L_\infty(m_1 \times m_2)}$) we find 
$\langle S, f\rangle =\sum d_n   \tr( {}^t V_n a_n)$.
Then $$ |\langle t, f\rangle|\le \|t\|_\wedge \|f\|_\infty \le w(\d) \|f\|_\infty$$
and by Lemma \ref{nl1} and \eqref{33}
$$ |\langle r, f\rangle|\le \d \sum d_n \tr|a_n| \|\psi^1_n\|_{L_\infty(m_1; M_{d_n})} \|\psi^2_n\|_{L_\infty(m_2; M_{d_n})}
\le \d \sum d_n \tr|a_n| C'_1C'_2.$$
Therefore
$$|\sum d_n   \tr( {}^t V_n a_n)|=|\langle S, f\rangle| \le w(\d) \|f\|_\infty+ \d \sum d_n \tr|a_n| C'_1C'_2,$$
and taking the sup over all $V_n$'s we find
$\sum d_n \tr|a_n| \le w(\d) \|f\|_\infty+ \d \sum d_n \tr|a_n| C'_1C'_2,$
and we conclude choosing $\d$ small enough so that $ \d  C'_1C'_2<1$
that we have
$$\sum d_n \tr|a_n| \le w(\d) (1-   \d  C'_1C'_2)^{-1} \|f\|_\infty.$$
Taking $\d  C'_1C'_2=1/2$, this completes the proof with 
$\alpha=2w( (2C'_1C'_2)^{-1})$, and using Remark \ref{mel}
we obtain the announced bound on $\alpha$.
\end{proof}

In particular, we have
 \begin{cor}\label{69'} 
Let $\{\psi_n\mid 1\le n\le N\} \subset L_\infty(m;M_{d_n})$
satisfying \eqref{33}.
Assume that the system
$\{d_n^{1/2} \psi_n(i,j)\mid 1\le n\le N, 1\le i,j\le d_n\}$
admits a biorthogonal system that is $1$-dominated by
$\{d_n^{1/2} g_n(i,j)\mid 1\le n\le N, 1\le i,j\le d_n\}$.
Then 
there is a number $\alpha=\alpha(C')$
(depending only on  $C'$)  
such that
$(\psi_n)$ is ${\dot\otimes}^2$-Sidon with 
constant $\alpha$.  
\end{cor}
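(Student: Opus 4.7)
The plan is to deduce this as a direct consequence of Theorem \ref{t2} by taking the two systems $(\varphi^1_n), (\varphi^2_n)$ and the two systems $(\psi^1_n), (\psi^2_n)$ in that theorem to coincide pairwise with the data supplied by the hypothesis.

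First I would unpack the hypothesis. By assumption there exists a family $\{d_n^{1/2} \varphi_n(i,j) : 1 \le n \le N,\ 1\le i,j\le d_n\}$ which is biorthogonal to $\{d_n^{1/2} \psi_n(i,j)\}$ in $L_1(m)$--$L_\infty(m)$ duality and which is $1$-dominated by the Gaussian family $\{d_n^{1/2} g_n(i,j)\}$. The biorthogonality relation, once the factors of $d_n^{1/2}$ are absorbed, reads exactly $\int \varphi_n(i,j)\overline{\psi_{n'}(k,\ell)}\,dm = d_n^{-1}\delta_{n,n'}\delta_{i,k}\delta_{j,\ell}$, which is \eqref{37}. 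The domination assumption translates (via Proposition \ref{p2}, or simply by the definition of $1$-dominated in Definition \ref{dom}) into the entrywise statement \eqref{11nb} for $(\varphi_n)$ with constant $C=1$, matching the hypothesis of Theorem \ref{t2}.

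Next I would set $\varphi^1_n = \varphi^2_n = \varphi_n$ and $\psi^1_n = \psi^2_n = \psi_n$, so that $C_1 = C_2 = 1$ and $C'_1 = C'_2 = C'$ by \eqref{33}. Then Theorem \ref{t2} yields that the matricial system $(\psi_n \dot\otimes \psi_n)$ on $(T\times T, m\times m)$ is Sidon with constant depending only on $C'$. By Definition \ref{d3'}(iii) this is exactly the statement that $(\psi_n)$ is $\dot\otimes^2$-Sidon with a constant $\alpha = \alpha(C')$, as required.

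There is no real obstacle beyond careful translation of the hypotheses into the format used in Theorem \ref{t2}; the only minor point to check is that the biorthogonality relation furnished by the abstract hypothesis (at the scalar level of normalized entries) is precisely the matricial relation \eqref{37} (equivalently \eqref{37b}--\eqref{37e}), so that Theorem \ref{t2} applies verbatim. The quantitative dependence $\alpha = \alpha(C')$ comes out of the bound $\alpha = 2w((2C'^2)^{-1})$ produced in the proof of Theorem \ref{t2}, together with the logarithmic estimate on $w(\delta)$ recorded in Remark \ref{mel}.
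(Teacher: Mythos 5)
Your proof is correct and follows the paper's intended route exactly: Corollary \ref{69'} is the immediate specialization of Theorem \ref{t2} to the diagonal case $\varphi^1_n=\varphi^2_n=\varphi_n$ (the matricial packaging of the hypothesized biorthogonal system) and $\psi^1_n=\psi^2_n=\psi_n$, with $C_1=C_2=1$ and $C'_1=C'_2=C'$. Your unpacking of the scalar-level biorthogonality into \eqref{37} and of $1$-domination into \eqref{11nb} with $C=1$ is precisely the translation the paper has in mind when it introduces the corollary with ``In particular.''
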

\begin{rem}[Returning to group representations]\label{86}
 Let $G$ be a compact group.
  Let $\Lambda=\{\pi_n\}\subset \hat G$ be a sequence of distinct unitary representations
  on  $G$. Let $d_n=\dim(\pi_n)$. Then (Peter-Weyl) $\{d_n^{1/2}\pi_n(i,j)\}$
  is an orthonormal system in $L_2(G)$.
  Thus we may apply Corollary \ref{69'}  with
  $\psi_n=\varphi_n=\pi_n$ on $(G,m_G)$. Recalling Theorem \ref{87},
  we find that if $\Lambda=\{\pi_n\}$ satisfies \eqref{11},
  then $\Lambda$ is a Sidon set.
  Indeed, for  representations, ${\dot\otimes}^2$-Sidon (or ${\dot\otimes}^k$-Sidon)  obviously implies Sidon. This was first proved in \cite{Pi,Pi2}.
\end{rem}
   \begin{rem}[On almost biorthogonal systems]\label{71'}
  In the situation of the preceding Corollary, just like in Remark
  \ref{R71} it suffices
  to have a system almost biorthogonal to 
  $\{d_n^{1/2} \psi_n(i,j)\mid 1\le n\le N, 1\le i,j\le d_n\}$.
  More precisely, let $I=\{(n, i,j) \mid   1\le n\le N, 1\le i,j\le d_n\}$.
  Let $p=(n, i,j)\in I$ and $p'=(n', i',j')\in I$.
  Let $a=[a(p,p')]$ be the matrix defined by
  $a(p,p') =\langle  d_n^{1/2} \varphi_n(i,j) , d_{n'}^{1/2} \psi_{n'}(i',j') \rangle$.
  Assume $a$ invertible with inverse $b$ such that $\|b\|\le c$.
  If $\{d_n^{1/2} \varphi_n(i,j)\mid 1\le n\le N, 1\le i,j\le d_n\}$
  is $1$-dominated by
$\{d_n^{1/2} g_n(i,j)\mid 1\le n\le N, 1\le i,j\le d_n\}$,
 then 
there is a number $\alpha=\alpha(c,C')$
(depending only on  $c,C'$)  
such that
$(\psi_n)$ is ${\dot\otimes}^2$-Sidon with 
constant $\alpha$.  
\end{rem}
\begin{rem}[On almost biorthogonal single systems]\label{71''}
The preceding Remark is already significant for a single random
$d\times d$-matrix $\psi\in L_\infty(m;M_d)$ with
$\|\psi\|_{L_\infty(m;M_d)}\le C'$. Indeed, let $\varphi\in L_1(m;M_d)$.
Let $\{d^{1/2}   g_{ij}\mid 1\le i,j \le d\}$ be a family 
of Gaussian complex variables with   $\E(g_{ij})=0$ and  
$\E|g_{ij}|^2=1$.
We again replace \eqref{37} by  almost orthogonality.
Let $a$ be the $d^2\times d^2$-matrix defined by
$a(i,j ; i',j')= d \int \psi^*_{ij}  \varphi_{i'j'} \ dm$.
If $a$ is invertible and $\|a^{-1}\|_{M_{d^2} }\le c$,
and if $(d^{1/2}   \varphi_{ij})$ is $1$-dominated by 
$(d^{1/2}   g_{ij})$, there is $\alpha(c,C')$ \emph{(independent of $d$)}
such that
$$\forall x\in M_d\quad \tr |x|\le \alpha \|tr (x(\psi \dot\otimes \psi))\|_\infty.$$
In other words, the singleton $\{\psi\}$ is ${\dot\otimes}^2$-Sidon
with constant $\alpha$.
\end{rem}

\section{An example}\label{exam}

The following example  provides us with an illustration
of the possible use of Corollary \ref{69'} and Remark \ref{71''}.
Although there may well be an   alternate argument, we do not see
a direct proof of the phenomenon appearing in Corollary \ref{exa}.

Let $\chi\ge 1$ be a constant (to be specified later).
Let $T_n$ be the set of $n \times n$-matrices $a=[a_{ij}]$
with $a_{ij}=\pm 1/\sqrt n$.
Let
 $$A^\chi_n=\{a\in T_n \mid \|a\| \le \chi\}.$$
This set includes the famous Hadamard matrices.
We have then
\begin{cor}\label{exa} There is a numerical $\chi\ge 1$  such that 
for some $C$ we have
$$\forall n\ge 1\  \forall x\in M_n\quad \tr|x| \le C \sup_{a',a''\in A^\chi_n}  |\tr(x a' a'')|.$$
Equivalently, denoting
the set  $\{a' a''\mid a',a''\in A^\chi_n\}$ by $A^\chi_nA^\chi_n$, its absolutely convex hull satisfies$$  (\chi)^2 \text{absconv}[A^\chi_nA^\chi_n]  \subset B_{M_n} \subset  C \text{absconv}[A^\chi_nA^\chi_n]$$
\end{cor}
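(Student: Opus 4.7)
The plan is to apply Remark~\ref{71''} to a single random $n\times n$-matrix $\psi$ obtained by conditioning a uniform $\pm1/\sqrt n$-matrix to lie in $A^\chi_n$. Let $m$ denote the uniform probability on $T_n$, and fix a numerical $\chi$ (say $\chi=3$) such that $p_n:=m(A^\chi_n)\geq p_0>0$ uniformly in $n$. For $n$ so small that $\sqrt n\le \chi$ this is automatic; for large $n$, it follows from $\E_m\|a\|\le 2+o(1)$ and Talagrand's concentration for $a\mapsto\|a\|$ (which is $O(1/\sqrt n)$-Lipschitz in Hamming distance), giving $m(\|a\|\geq\chi)\le e^{-cn}$. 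Let $\mu=m(\cdot\cap A^\chi_n)/p_n$ be the conditional probability and set $\psi(a)=\varphi(a)=a$ on $A^\chi_n$. Then $\|\psi\|_{L_\infty(\mu;M_n)}\le \chi$, so we have $C'=\chi$ in Remark~\ref{71''}.

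\textbf{SubGaussian/domination.} Under $m$, the entries $\sqrt n\,\psi_{ij}$ are i.i.d.\ $\pm1$ Rademacher, hence subGaussian with a universal constant. By the $L_p$-characterization of $L_{\psi_2}$ (Remark~\ref{lamp}) and $\|f\|_{p,\mu}\le p_0^{-1/2}\|f\|_{p,m}$ for $p\ge 2$, the entries remain subGaussian under $\mu$ with a numerical constant. By Theorem~\ref{87}, $\{\sqrt n\,\psi_{ij}\}$ is entrywise $C''$-dominated by $\{\sqrt n\, g_{ij}\}$; after rescaling we may take domination constant $1$, as required by Remark~\ref{71''}.

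\textbf{Approximate biorthogonality (main obstacle).} The $n^2\times n^2$ matrix $a$ of Remark~\ref{71''} has entries $a(i,j;i',j')=n\,\E_\mu[\overline{\psi_{ij}}\psi_{i'j'}]$. The diagonal equals $1$. For $(i,j)\ne(i',j')$, use the sign-flip involution $\sigma$ on $T_n$ that flips the $(i,j)$-entry: $\sigma$ preserves $m$, reverses the sign of $\overline{a_{ij}}a_{i'j'}$, and since $\|a-\sigma(a)\|_{M_n}\le 2/\sqrt n$, the symmetric difference $A^\chi_n\triangle\sigma(A^\chi_n)$ is contained in the boundary layer $\{|\|a\|-\chi|\le 2/\sqrt n\}$. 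Concentration of the spectral norm bounds the $m$-measure of this layer by $Ce^{-cn}$ (for $\chi$ chosen comfortably above $\E_m\|a\|$), yielding
\[
|a(i,j;i',j')|\le \frac{n}{p_n}\cdot\frac{1}{n}\cdot Ce^{-cn}=O(e^{-cn}).
\]
Hence $\|a-I\|_{M_{n^2}}\le \|a-I\|_{HS}=O(n^2 e^{-cn})\to 0$, so for $n\ge n_0$, $a$ is invertible with $\|a^{-1}\|\le 2$. For the finitely many $n\le n_0$ we may (by enlarging $\chi$ if necessary) arrange $A^\chi_n=T_n$, in which case $a=I$ exactly. This is the step I expect to require the most care, since it is the only place where one uses nontrivial probabilistic information about the spectrum of Rademacher matrices.

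\textbf{Conclusion.} With $c=2$ and $C'=\chi$, Remark~\ref{71''} supplies a numerical $\alpha=\alpha(c,C')$ such that
\[
\tr|x|\le \alpha\,\|\tr(x\,\psi\dot\otimes\psi)\|_\infty=\alpha\sup_{a',a''\in A^\chi_n}|\tr(xa'a'')| \qquad (x\in M_n),
\]
which is the first assertion of the corollary. The equivalent inclusion statement is standard Banach-space duality: the trace-norm unit ball is the polar of $B_{M_n}$, so $B_{M_n}\subset C\,\mathrm{absconv}[A^\chi_n A^\chi_n]$ is exactly the dual reformulation, while the reverse containment with $\chi^{-2}$ is immediate from $\|a'a''\|_{M_n}\le\chi^2$.
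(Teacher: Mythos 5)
Your argument is correct, but it takes a genuinely different route from the paper. The paper constructs the two systems on the Gaussian probability space $(\Omega,\P)$: it takes $\psi_n$ to be the truncated sign matrix $(2^{1/2}/\gamma(1))\,\vp_n\,1_{\{\vp_n\in A^\chi_n\}}$ with $\vp_n(i,j)=n^{-1/2}\mathrm{sign}(g^{\R}_n(i,j))$, and takes $\varphi_n = 2^{-1/2}g^{\R}_n$ to be the Gaussian matrix itself. This makes the domination of $(\varphi_n(ij))$ by $(g^{\C}_n(ij))$ trivial (no appeal to Talagrand's theorem is needed at this point), and the almost-biorthogonality comes directly from the exact identity $\E(\mathrm{sign}(g)\,g)=\E|g|$ plus an exponentially small truncation error $\E(|g^{\R}_n(i'j')|\,1_{\{\vp_n\notin A^\chi_n\}})$. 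Your version instead works on the Rademacher cube conditioned to $A^\chi_n$, taking $\psi=\varphi$ to be the coordinate map. You must therefore (a) pay an extra invocation of Theorem~\ref{87} (i.e.\ Talagrand) after first verifying that the entries remain subGaussian under the conditional measure $\mu$, which requires the uniform lower bound $p_n\ge p_0>0$ and the $L_p$-comparison $\|f\|_{p,\mu}\le p_n^{-1/p}\|f\|_{p,m}$ from Remark~\ref{lamp}; and (b) replace the paper's truncation estimate by the sign-flip symmetry plus boundary-layer argument for the off-diagonal entries of the Gram matrix. Both routes work. The paper's is a bit shorter because the biorthogonal system is literally Gaussian; yours is perhaps more self-contained on the Rademacher side, and the sign-flip estimate $m(A^\chi_n\triangle\sigma_{ij}(A^\chi_n))\le 2\,m\{\,|\|a\|-\chi|\le 2/\sqrt n\,\}$ is a nice alternative to the truncation calculation. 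Two small points to tighten: the rescaling ``we may take domination constant $1$'' should be done by replacing $\varphi$ by $\varphi/C''$, which scales the Gram matrix $a$ by $1/C''$ and hence multiplies $\|a^{-1}\|$ by $C''$, so the final $\alpha$ is $\alpha(2C'',\chi)$, still numerical; and for the finitely many small $n$ you should not enlarge $\chi$ (it must be a fixed numerical constant), but rather observe, as the paper does, that the conclusion holds for each fixed $n$ with some constant, and absorb these finitely many constants into $C$.
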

\begin{proof} Let $(\Omega,\P)$ be a probability space.
Let $g^\R_n$ (resp. $g^\C_n$) be a random $n \times n$-matrix 
on $(\Omega,\P)$ with i.i.d. 
 real-valued (resp. complex-valued) normalized Gaussian entries of mean $0$ and $L_2$-norm $=(1/n)^{1/2}$
as usual.  Let $$\vp_n(i,j)(\omega)=n^{-1/2}{\rm sign}(g^\R_n(i,j)(\omega)).$$
Let  $\gamma(1)$
be the $L_1$-norm of a normal Gaussian variable 
(i.e. $\gamma(1)=(2\pi)^{-1/2}\int |x| \exp{-(x^2/2)} dx $).
Clearly, for any $i,j, i',j'$
$$\E(\vp_n(ij) g^\R_n(i'j')) =\d_{ii'} \d_{jj'} n^{-1/2}\E|g^\R_n(i'j')|=\d_{ii'} \d_{jj'} n^{-1} \gamma(1) .$$

We define $\psi_n\in L_\infty(\P; M_n)$ by
$$\psi_n = (2^{1/2}/ \gamma(1) ) \vp_n  1_{\vp_n\in A^\chi_n} .$$
Note $\|\psi_n\|_{M_n}\le 2^{1/2}\chi/\gamma(1)$. 
We will use Corollary \ref{69'}.
Clearly $ \E( \psi^*_n \otimes g_{n'})=0$ whenever $n\not=n'$. To handle the case
$n=n'$,
it is well known that there is  $c_0>0$ such that
$$\forall n\forall \chi \ge c_0\quad
\P(\vp_n\not\in  A^\chi_n)= \P\{\| \vp_n\|>\chi\} \le \exp - (c_0 n \chi^2).$$
Therefore, if $\chi \ge c_0$ for any $i,j, i',j'$  we have
$$|\E(\psi_n(ij) 2^{-1/2} g^\R_n(i'j'))-\d_{ii'} \d_{jj'} n^{-1}   | \le
\gamma(1)^{-1} \E(|g^\R_n(i'j')| 1_{\{\vp_n\not\in  A^\chi_n\}})\le \gamma(1)^{-1} n^{-1/2}  \exp - (c_0 n \chi^2/2).$$
Fix $\chi \ge c_0$.
This shows that the matrix
$a(i,j ; i',j')= n  \int \psi_n(ij) 2^{-1/2} g^\R_n(i'j') \ dm$ is a perturbation
of the identity when  $n$ is large enough
so that (say) when $n\ge n_0(\chi)$ it is invertible with inverse
of norm $\le 2$. Let $\varphi_n(ij)= ( 2^{-1/2} g^\R_n(ij))$.
 Note that $(\varphi_n(ij))$ is obviously $1$-dominated by
 $(   g^\C_n(ij))$.
 The conclusion follows from Remark \ref{71''} (applied here with $d=n$
 and $C'=2^{1/2}\chi/\gamma(1)$) for all $n\ge n_0(\chi)$. But
 the case $n<n_0(\chi)$ can be handled trivially
 by adjusting $\alpha$.
\end{proof}

\section{Randomly Sidon matricial systems}\label{rsms}

We first recall a useful basic fact (see \cite{MaPi} for variations on this theme).
  \begin{rem}[Contraction principle]\label{90} Let $(u_k)$ and $G$ be as in Lemma \ref{R72}.
  Let $\{x_k(i,j)\mid k\ge 1, 1\le i,j\le d_k\}$ be a finitely supported family 
  in an arbitrary Banach space $B$. For any matrix $a\in M_{d_k}$
  with complex entries, we denote by $ax$ and $xa$
  the matrix products (with entries in $B$).
  By convention, we write $\tr(u_k x_k)=\sum\n_{ij} u_k(i,j)x_k(j,i)$.
  With this notation, the following``contraction principle" holds
  $$\int \|\sum d_k \tr( a_k u_k b_k x_k)\| dm_G
 \le \sup\n_k\|a_k\|_{M_{d_k}} \sup\n_k \|b_k\|_{M_{d_k}} 
 \int \|\sum d_k \tr(   u_k    x_k)\| dm_G.
 $$
 Indeed, this is obvious by the translation invariance of $m_G$
 if $a_k,b_k$ are all unitary. Then the result follows by an extreme point argument, since the unit ball of ${M_{d_k}} $ is the closed convex hull of
 its unitary elements.\\
 The same inequality (same proof) holds with $(g_k)$ (as in Lemma \ref{68}) in place of $(u_k)$.
   \end{rem}
 
 \begin{pro}\label{46} Let $( \psi^1_n)$ be a randomly central Sidon system
 with constant $C_1$.
 Let $(\varphi^2_n,\psi^2_n)$ be a system satisfying \eqref{33} with constant
 $C'_2$
 and \eqref{37d}.
 Then the system  
 $ (\psi^1_n  \dot\otimes \varphi^2_n)$ is randomly Sidon with constant
 $C_1C'_2$.
 \end{pro}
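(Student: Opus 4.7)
The plan is to adapt the scalar argument of Remark \ref{sav1}. First I would reduce to the case $x_k\ge 0$ via the polar decomposition $x_k = v_k|x_k|$ (with $v_k$ unitary) and cyclicity of the trace: $\tr(x_k g_k \psi^1_k \varphi^2_k) = \tr(|x_k| g_k \psi^1_k (\varphi^2_k v_k))$. Replacing the pair $(\varphi^2_k,\psi^2_k)$ by $(\varphi^2_k v_k,\psi^2_k v_k)$ preserves both the uniform bound \eqref{33} (operator norm is right-unitarily invariant) and the biorthogonality \eqref{37d} (since $\tr(v_k^* a v_k) = \tr a$), so we may assume $x_k\ge 0$, whence $\tr(x_k) = \tr|x_k|$.

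Next, for each fixed $s\in T_2$ I would apply the matricial contraction principle (Remark \ref{90} with $g_k$ in place of $u_k$) with $\alpha_k = \psi^{2*}_k(s)$ and $\beta_k = I$:
\[
\E\sup_{t_1}\bigl|\sum d_k \tr\bigl(x_k \psi^{2*}_k(s) g_k \psi^1_k(t_1) \varphi^2_k(s)\bigr)\bigr|
\;\le\; C'_2\, \E\sup_{t_1}\bigl|\sum d_k \tr\bigl(x_k g_k \psi^1_k(t_1) \varphi^2_k(s)\bigr)\bigr|.
\]
Integrating both sides against $dm_2(s)$ and using $\int_s \E\sup_{t_1}|F(\cdot,s)|\,dm_2\le E$ (where $E := \E\sup_{t_1,t_2}|\sum d_k\tr(x_k g_k\psi^1_k\varphi^2_k)|$) on the right, together with the elementary bound $\int_s\sup_{t_1}|G|\,dm_2\ge \sup_{t_1}|\int_s G\,dm_2|$ on the left, yields
\[
\E\sup_{t_1}\bigl|\int_s \sum d_k \tr(x_k \psi^{2*}_k g_k \psi^1_k \varphi^2_k)\,dm_2(s)\bigr| \le C'_2 E.
\]

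The heart of the argument is the matricial biorthogonality identity
\[
\int \psi^{2*}_k(s)\,A\,\varphi^2_k(s)\,dm_2(s) = d_k^{-1}\tr(A)\,I
\]
for any $A\in M_{d_k}$ independent of $s$, a direct consequence of \eqref{37} (checked entrywise) and the non-commutative counterpart of the scalar identity $\int\overline{\psi^2_k(s)}\varphi^2_k(s)\,dm_2=1$ used in Remark \ref{sav1}. Applied with $A = g_k\psi^1_k(t_1)$, the inner integral collapses to $\sum_k \tr(x_k)\tr(g_k\psi^1_k(t_1)) = \sum_k\tr|x_k|\tr(g_k\psi^1_k(t_1))$. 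Then the randomly central Sidon hypothesis on $\psi^1$, applied with the scalar coefficients $\lambda_k = \tr|x_k|/d_k$, gives
\[
\sum d_k\tr|x_k| = \sum d_k^2|\lambda_k| \le C_1\,\E\sup_{t_1}\Bigl|\sum \tr|x_k|\,\tr(g_k\psi^1_k)\Bigr| \le C_1 C'_2\,E,
\]
which is the desired inequality.

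The main subtlety — and what makes this more than a routine transcription of Remark \ref{sav1} — is the placement of the inserted $\psi^{2*}_k$: Remark \ref{90} forces it to sit next to $g_k$ in the cyclic trace, so $\psi^{2*}_k$ and $\varphi^2_k$ end up on \emph{opposite} sides of the matrix $A = g_k\psi^1_k$ rather than cyclically adjacent. A priori this looks like the wrong configuration for biorthogonality to kick in, but precisely this ``sandwich'' $\psi^{2*}_k A \varphi^2_k$ integrates, thanks to \eqref{37}, to the scalar multiple $d_k^{-1}\tr(A)\,I$ of the identity — exactly the reduction to a scalar trace $\tr(g_k\psi^1_k)$ needed to invoke randomly central Sidon.
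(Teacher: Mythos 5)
Your argument is correct and follows essentially the same route as the paper's proof: the contraction principle of Remark \ref{90} to insert $\psi^{2*}_k(t_2)$ next to $g_k$, the biorthogonality \eqref{37} collapsing the resulting sandwich to $d_k^{-1}\tr(\cdot)I$ after integrating in $t_2$, and then the randomly central Sidon hypothesis applied to scalar coefficients. The only differences are cosmetic — you extract the unitary of the polar decomposition at the outset and form the sandwich as $\psi^{2*}_k(g_k\psi^1_k)\varphi^2_k$, whereas the paper keeps $x_k$ general and integrates $\varphi^2_k|x_k^*|\psi^{2*}_k$ via \eqref{37d} — and both yield the constant $C_1C'_2$.
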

 \begin{proof} Let $x_k\in M_{d_k}$.
 We have (for simplicity in the sequel we always abusively write $\sup$ for essential suprema)
 $$\E \sup\n_{t_1,t_2} |\sum d_k \tr ( x_k g_k   \psi^1_k (t_1) \varphi^2_k (t_2)|
 \ge \sup\n_{t_2} \E \sup\n_{t_1} |\sum d_k \tr (x_k g_k \psi^1_k (t_1)  \varphi^2_k (t_2)|.
$$ 
Assume  $(\psi^2_k)$ satisfies  \eqref{33}  with constant $C'_2$.
Then, by Remark \ref{90}, we have for a.a. fixed $t_2$
$$\E \sup\n_{t_1} |\sum d_k \tr (x_k g_k  \psi^1_k (t_1) \varphi^2_k (t_2)|
\ge (C'_2)^{-1}\E \sup\n_{t_1} |\sum d_k \tr (|x_k^*| \psi_k^2(t_2)^* g_k   
\psi^1_k (t_1) \varphi^2_k (t_2)|$$
and by the trace identity, this is
$$=(C'_2)^{-1}\E \sup\n_{t_1} |\sum d_k \tr (g_k   
\psi^1_k (t_1) \varphi^2_k (t_2) |x_k^*| \psi_k^2(t_2)^* |$$
 and hence
 $$\sup\n_{t_2} \E \sup\n_{t_1} |\sum d_k \tr (x_k g_k  
\psi^1_k (t_1) \varphi^2_k (t_2)|\ge
 (C'_2)^{-1} \E \sup\n_{t_1} |\sum d_k \tr ( g_k\psi^1_k (t_1)  [\int   
 \varphi^2_k   |x_k^*|
 {\psi_k^2}^* dm(t_2)]|
$$ 
and by \eqref{37d} and the randomly central Sidon assumption on $(\psi^1_k)$ the last term
is
$$=(C'_2)^{-1} \E \sup\n_{t_1} |\sum   \tr ( g_k  \psi^1_k (t_1)[ \tr |x_k^*|]
|.
\ge (C'_2)^{-1}C_1^{-1}   \sum d_k \tr | x_k^*|.
$$ 
Since $\tr | x_k^*|=\tr | x_k|$ this proves the announced result.
\end{proof}
  \begin{rem} For irreducible representations on a compact group
  Proposition \ref{46} shows that randomly central Sidon
  implies randomly   Sidon with identical constants.
 \end{rem}
\begin{pro}\label{P} Let $( \psi^2_n)$ be a randomly   Sidon system on $(T_2,m_2)$.
 Let $(\varphi^1_n,\psi^1_n)$ be a system satisfying   \eqref{37c}, or equivalently \eqref{37},
 on $(T_1,m_1)$. We also assume that $( \psi^1_n)$, $( \psi^2_n)$ and 
  $(\varphi^1_n)$ are all uniformly bounded, i.e. satisfy \eqref{33}. 
  Then the system  
 $ (\psi^1_n  \dot\otimes \psi^2_n)$ is ${\dot\otimes}^2$-Sidon.
 \end{pro}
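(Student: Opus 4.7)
The plan is to apply Corollary~\ref{69'} to the matricial system $\tilde\psi_n:=\psi^1_n\dot\otimes\psi^2_n$ on $T_1\times T_2$, which satisfies \eqref{33} with constant $C'_1C'_2$; what is needed is a biorthogonal $\tilde\varphi_n\in L_1(m_1\times m_2;M_{d_n})$ whose entries are $(g_n)$-dominated in the sense of \eqref{11nb}.

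First I establish a matricial analogue of Propositions~\ref{rs1}--\ref{rs2}: the randomly Sidon hypothesis on $(\psi^2_n)$, combined with Hahn--Banach and the $\cl L_1$-lifting argument of Lemma~\ref{lr}, produces a matricial system $(\varphi^2_n)\subset L_1(m_2;M_{d_n})$ biorthogonal to $(\psi^2_n)$ in the sense of \eqref{37c} and an operator $u_2:L_1(\P)\to L_1(m_2)$ of norm at most $C_2(1+\vp)$ with $u_2(g_n(i,j))=\varphi^2_n(i,j)$. I then take the candidate $\tilde\varphi_n(s,t):=\varphi^1_n(s)\varphi^2_n(t)$ (matrix product). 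Two successive applications of \eqref{37d} give
\[
\int\varphi^1_n(s)\varphi^2_n(t)\,a\,\psi^{2*}_n(t)\psi^{1*}_n(s)\,dm_1\,dm_2
=\int\varphi^1_n(s)\bigl[d_n^{-1}\tr(a)I\bigr]\psi^{1*}_n(s)\,dm_1
=d_n^{-1}\tr(a)\,I,
\]
so $(\tilde\varphi_n,\tilde\psi_n)$ satisfies \eqref{37}.

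The key technical step is the domination of $\tilde\varphi_n$ by $(g_n)$. By Hahn--Banach (in the form of Lemma~\ref{lr}) this reduces to
\[
\Bigl\|\sum_{n,i,j}c_{n,i,j}\tilde\varphi_n(i,j)\Bigr\|_{L_1(m_1\times m_2)}\le C\,\Bigl\|\sum_{n,i,j}c_{n,i,j}g_n(i,j)\Bigr\|_{L_1(\P)}.
\]
Setting $y_{n,p,j}(s):=\sum_i c_{n,i,j}\varphi^1_n(i,p)(s)$, the $t$-slice of the left-hand integrand equals $u_2\bigl(\sum_{n,p,j}y_{n,p,j}(s)g_n(p,j)\bigr)$, so Fubini together with $\|u_2\|\le C_2(1+\vp)$ reduces the estimate to
\[
C_2(1+\vp)\int_{T_1}\Bigl\|\sum_{n,p,j}y_{n,p,j}(s)g_n(p,j)\Bigr\|_{L_1(\P)}\,dm_1(s).
\]
Khintchine--Kahane for Gaussian sums replaces $L_1(\P)$ by $L_2(\P)$ up to a universal constant, and Jensen on $m_1$ bounds this by $\bigl(\int\sum_{n,p,j}|y_{n,p,j}(s)|^2/d_n\,dm_1\bigr)^{1/2}$. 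The crucial column estimate $\sum_p|y_{n,p,j}(s)|^2=\|{}^t\varphi^1_n(s)\,c_{n,\cdot,j}\|_{\ell^2}^2\le (C'_1)^2\sum_i|c_{n,i,j}|^2$ uses only $\|\varphi^1_n(s)\|_{M_{d_n}}\le C'_1$, and is therefore uniform in the matrix dimension; summing gives $C'_1\bigl(\sum_{n,i,j}|c_{n,i,j}|^2/d_n\bigr)^{1/2}=C'_1\|\sum c\,g\|_{L_2(\P)}\asymp C'_1\|\sum c\,g\|_{L_1(\P)}$ by another application of Khintchine--Kahane. After a trivial renormalization of $\tilde\varphi_n$ to make the domination constant equal to $1$, Corollary~\ref{69'} applies to $\tilde\psi_n$ and yields the $\dot\otimes^2$-Sidon property with $\alpha=\alpha(C_2,C'_1,C'_2)$.

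The main obstacle is the matricial Hahn--Banach of the first step, since Propositions~\ref{rs1}--\ref{rs2} were stated only for scalar systems and require a careful but routine amplification; the remaining technical heart is the dimension-free Hilbertian estimate in Step~3, which succeeds precisely because the operator-norm bound on $\varphi^1_n(s)$ controls its $\ell^2$-column norms uniformly in $d_n$.
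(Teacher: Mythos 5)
Your overall plan — produce a biorthogonal system $\tilde\varphi_n$ to $\tilde\psi_n=\psi^1_n\dot\otimes\psi^2_n$ that is dominated by $(g_n)$ and then invoke Corollary~\ref{69'} — is the same as the paper's, but your route to $\tilde\varphi_n$ diverges from it (the paper constructs the biorthogonal system in one shot on $T_1\times T_2$ by a Hahn--Banach argument on the dual claim $\sum d_k|\tr(a_{kk})\tr(b_{kk})|\le C_2C'_1\,\E\sup|\sum d_k\tr(g_kf_k)|$, using the contraction principle of Remark~\ref{90} for the $\varphi^1$-factor, then feeds the result into Theorem~\ref{t2}). Unfortunately your route contains a genuine gap at the ``key technical step.''

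The gap is the reduction of $(g_n)$-domination of $\tilde\varphi_n$ to the inequality
\[
\Bigl\|\sum_{n,i,j}c_{n,i,j}\tilde\varphi_n(i,j)\Bigr\|_{L_1(m_1\times m_2)}\le C\Bigl\|\sum_{n,i,j}c_{n,i,j}g_n(i,j)\Bigr\|_{L_1(\P)}.
\]
Domination in the sense of \eqref{1nb} is the existence of a bounded operator $u:L_1(\P)\to L_1(m)$ with $u(g_n(i,j))=\tilde\varphi_n(i,j)$ on the \emph{whole} of $L_1(\P)$; your inequality only shows that the map $g_n(i,j)\mapsto\tilde\varphi_n(i,j)$ is bounded on the finite-dimensional span of the Gaussians inside $L_1(\P)$. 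These are not equivalent: the first Gaussian chaos is not complemented in $L_1$ (the chaos projection is unbounded on $L_1$), and $L_1(m)$ is not injective, so a bounded map on a non-complemented subspace of $L_1(\P)$ into $L_1(m)$ need not extend. Neither Hahn--Banach nor Lemma~\ref{lr} performs such an extension — Lemma~\ref{lr} replaces a target $L_1(m)^{**}$ by $L_1(m)$, but it starts from an operator already defined on all of $L_1(\P)$, which is exactly what you do not yet have. The correct characterizations of domination (Propositions~\ref{rs1}, \ref{rs2}, \ref{p2}) are dual conditions with arbitrary $L_\infty$-valued coefficients $(f_n)$, strictly stronger than your scalar $L_1$-inequality (which corresponds only to scalar $x_n$ in \eqref{g2}). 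Consequently, the subsequent Fubini/Khintchine/column-norm computation verifies a necessary but not sufficient condition and does not establish domination.

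There is a fix that stays close to your structure: having $u_2$ from Step~1, notice that for fixed $s$ the map $g_n(i,j)\mapsto\sum_p\varphi^1_n(s)(i,p)\,g_n(p,j)$ is implemented by a ``Gaussian rotation'' operator $W_s$ on $L_1(\P)$ with $\|W_s\|\le C'_1$ (the coefficient matrix is block-diagonal with blocks $\varphi^1_n(s)$ of norm $\le C'_1$; the bound on $L_1$ follows as in the proof of \eqref{c2} or via the Mehler formula \eqref{ou}), and then $\tilde u(f)(s,t):=u_2\bigl(W_sf\bigr)(t)$ is the required operator, with $\|\tilde u\|\le C'_1\|u_2\|$. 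This gives a genuine operator, not just an $L_1$-bound, but one must also address the measurability of $s\mapsto W_s$. Finally, I would note that your Step~1 (a matricial analogue of Proposition~\ref{rs2}) is itself more delicate than advertised: the matricial randomly Sidon condition uses a single random matrix $g_n$ (with correlated, not i.i.d., entries after multiplication by $x_k$), and the dual condition one needs for the Hahn--Banach construction of a biorthogonal system in the sense of \eqref{37} involves a larger family of test elements than the diagonal $x_k\in M_{d_k}$ directly supplies; the paper sidesteps this by never producing a $\varphi^2_n$ on $T_2$ alone — the $T_1$-integration against $\varphi^{1*}_k$ is exactly what reduces the needed estimate to the diagonal blocks $a_{kk},b_{kk}$.
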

 \begin{proof}  
 Let $f_k(t_1,t_2)=\sum\n_{\ell} a_{k\ell} ( \psi^1_\ell (t_1)   \psi^2_\ell(t_2)) b_{k\ell} $
where  $a_{k\ell}$ (resp. $b_{k\ell}$) is a matrix
of size $d_k \times d_\ell$ (resp. $d_\ell \times d_k$).
Assume $( \psi^2_n)$     randomly   Sidon with constant $C_2$,
and $\|\varphi^1_n\|_{L_\infty(M_{d_n})}\le C'_1$ for all $n$.
We claim that
$$\sum d_k |\tr (a_{kk})   \tr( b_{kk})|\le C_2C'_1 
\E\sup\n_{t_1,t_2} |   \sum d_k\tr(g_k   f_k)    |.
$$
By \eqref{37c},  \eqref{37d} and \eqref{37e} we have
$$d_k\int   {\varphi^1_k(t_1)}^* f_k  dm(t_1)  = \tr (a_{kk}) \psi^2_k(t_2)  b_{kk} .$$
Therefore since $( \psi^2_n)$ is randomly   Sidon with constant $C_2$
we have
$$\sum   |\tr (a_{kk})   \tr( b_{kk})|\le C_2 \E\sup\n_{t_2} | \int \sum d_k\tr(g_k  {\varphi^1_k(t_1)}^* f_k  )dm(t_1)   |$$
$$
\le C_2\int
\E\sup\n_{t_2} |   \sum d_k\tr(g_k  {\varphi^1_k(t_1)}^* f_k)    | dm(t_1)
\le C_2\sup\n_{t_1}
\E\sup\n_{t_2} |   \sum d_k\tr(g_k  {\varphi^1_k(t_1)}^* f_k)    |
$$
and by the contraction principle in Remark \ref{90}
$$\le C_2 C'_1\sup\n_{t_1}
\E\sup\n_{t_2} |   \sum d_k\tr(g_k   f_k)    |
$$
and a fortiori
$$\le C_2 C'_1 
\E\sup\n_{t_1,t_2} |   \sum d_k\tr(g_k   f_k)    |.
$$
Thus we obtain  the claim.\\
Let $m=m_1\times m_2$. Let $E\subset L_1(\P; L_\infty(m))$
be the subspace formed of all the functions of the form
$f= \sum d_k\tr(g_k   f_k)$. Let
$\xi:\ E\to \CC$ be the linear form defined by
$$\xi (f)= \sum   \tr (a_{kk})   \tr( b_{kk}).$$
By Hahn-Banach there is an extension  
$\xi' :\ L_1(\P; L_\infty(m)) \to \CC$ with norm $\le C_2 C'_1$.
Since $L_1(\P; L_\infty(m))$ can be identified to the projective tensor product
of $L_1(\P)$ and $L_\infty(m)$, $\xi$ defines
a bounded linear map $T:\ L_1(\P) \to L_\infty(m)^*$ with 
$\|T\| \le C_2 C'_1$ such that
\begin{equation}\label{66} \forall k\forall f_k\quad d_k \langle \sum\n_{i,j}T(g_k(i,j) ) , f_k(j,i)  \rangle=   \tr (a_{kk})   \tr( b_{kk}).\end{equation}
Let $\theta_k(i,j) = T(g_k(i,j) )$
and $\psi_k=  \psi^1_k \dot\otimes\psi^2_k$.
Then \eqref{66} implies $d_k\int \theta_k (j' ,i) \psi_\ell(j, i')=\d_{k\ell}\d_{ij}\d_{i'j'}.$
By Lemma \ref{lr} we may assume that $\theta_k(i,j)\in L_1(m)$
and $\|T\| \le (1+\vp)C_2 C'_1$.
Then we have $\int  \theta_k \otimes \psi_\ell \ dm =0$ if $\ell\not=k$
and 
$$d_k\int  \theta_k \otimes \psi_k \ dm =  \sum\n_{i,j\le d_k} e_{ij} \otimes e_{ij}.$$
Thus if we let $\varphi_k=\theta_k^*$,
then $(\varphi_k,\psi_k)$ satisfies
\eqref{37c} and $(\varphi_k)$ (as well as $(\theta_k)$)
is $\|T\|$-dominated by $(g_k)$.
Therefore, by Theorem \ref{t2} we conclude that
$(\psi_k)$ is ${\dot\otimes}^2$-Sidon.
\end{proof}
 
The next statement records a simple observation.
\begin{pro}\label{Pr} Let $(\varphi_n, \psi_n) $ be systems satisfying
\eqref{33} and \eqref{37}.
In addition
assume that $(\varphi_n)$ satisfies
\eqref{31}.
The following are equivalent:
 \begin{itemize}
\item[(i)] $(  \psi_n) $ is randomly Sidon.
\item[(ii)] $(  \psi_n) $ is randomly ${\dot\otimes}^k$-Sidon for all $k\ge 1$.
\end{itemize}
\end{pro}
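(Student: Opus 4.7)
The direction (ii)$\Rightarrow$(i) is trivial, since randomly $\dot\otimes^1$-Sidon is by definition randomly Sidon. For (i)$\Rightarrow$(ii) the plan is to induct on $k$, with the base case $k=1$ being exactly (i).

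For the inductive step, I would assume $(\psi_n^{\dot\otimes k})$ is randomly Sidon on $(T^k,m^k)$ and apply Proposition~\ref{P} with the data
\[
\psi^1_n=\psi_n,\qquad \varphi^1_n=\varphi_n \text{ on } (T,m),\qquad \psi^2_n=\psi_n^{\dot\otimes k} \text{ on } (T^k,m^k).
\]
All hypotheses are at hand: the pair $(\varphi^1_n,\psi^1_n)$ satisfies \eqref{37c} by the assumed \eqref{37}; uniform boundedness of $\psi^1_n$ comes from \eqref{33}, of $\varphi^1_n$ from the extra assumption \eqref{31}, and of $\psi^2_n$ from submultiplicativity of the operator norm (bounded by $(C')^k$); and $(\psi^2_n)$ is randomly Sidon by the inductive hypothesis. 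Proposition~\ref{P} then yields that $\psi^{\dot\otimes(k+1)}=\psi^1\dot\otimes\psi^2$ is $\dot\otimes^2$-Sidon.

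It remains to pass from $\dot\otimes^2$-Sidon of $(\psi_n^{\dot\otimes(k+1)})$ to randomly Sidon of that same system, and for this I would chain two standard reductions. First, since Sidon implies randomly Sidon (Remark~\ref{bat2}), $\dot\otimes^2$-Sidon of $(\psi_n^{\dot\otimes(k+1)})$ automatically upgrades to randomly $\dot\otimes^2$-Sidon. Second, the matricial analogue of the Chevet/Sudakov comparison used in Remark~\ref{rbath0}---applied entry-wise to the matrix-valued functions, or, equivalently, in the Banach space $M_{d_n}$---shows that for any uniformly bounded matricial system, randomly $\dot\otimes^2$-Sidon implies randomly Sidon; since $(\psi_n^{\dot\otimes(k+1)})$ is bounded by $(C')^{k+1}$, this applies. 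This closes the induction. The only genuinely non-routine ingredient is the appeal to Proposition~\ref{P}; the mild technical point to double-check is that the scalar Chevet/Sudakov inequality of Remark~\ref{rbath0} remains valid in the matricial setting, which is immediate since Chevet's inequality holds in any Banach space.
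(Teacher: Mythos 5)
Your proposed induction breaks at the final step, and the gap is exactly the one the paper flags in Remark~\ref{rbath2}. After invoking Proposition~\ref{P} and Remark~\ref{bat2}, you have that $(\psi_n^{\dot\otimes(k+1)})$ is randomly $\dot\otimes^2$-Sidon and you want to conclude it is randomly Sidon. You assert that the Chevet/Sudakov comparison of Remark~\ref{rbath0} transfers ``immediately'' to the matricial setting. It does not: the paper explicitly states (Remark~\ref{rbath2}) that for \emph{general} matricial systems this implication ``seems unclear,'' with the Slepian-type argument only working for randomly \emph{central} $\dot\otimes^k$-Sidon. The obstruction is structural. In the matricial definition (Definition~\ref{d3'}) the randomizer $g_k$ is a full $d_k\times d_k$ matrix of Gaussians, and after the trace contraction
\[
\sum_k d_k\,\tr\bigl(x_k\,g_k\,\psi_k(t_1)\psi_k(t_2)\bigr)=\sum_k\sum_{j,\ell}(g_k)_{j\ell}\Bigl[\sum_{i,m}d_k\,(x_k)_{ij}(\psi_k(t_1))_{\ell m}(\psi_k(t_2))_{mi}\Bigr],
\]
the scalar coefficient multiplying each individual Gaussian entry $(g_k)_{j\ell}$ is a \emph{sum over internal indices} of products, not a pure tensor $x^1_{j\ell}(t_1)\otimes x^2_{j\ell}(t_2)$. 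Chevet's inequality, even in its multi-indexed form, bounds Gaussian averages of sums of \emph{tensor products}; it simply does not apply entry-wise here, so the claimed reduction has no proof. (For $x_k$ a scalar multiple of the identity the contraction simplifies and Slepian does work, which is exactly the ``randomly central'' caveat in Remark~\ref{rbath2}.)

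Beyond the gap, the route you take is disproportionately heavy relative to the statement. Invoking Proposition~\ref{P} brings in Theorem~\ref{t2}, hence Theorem~\ref{nt1}, hence Talagrand's majorizing measure theorem. The paper's own proof of this proposition is a short elementary calculation that avoids all of that: fix $t_2$, apply the randomly Sidon property of $(\psi_n)$ to the matrices $\psi_k(t_2)x_k$, integrate in $t_2$ to reach $\E\|\sum d_k\tr(x_k g_k\,\psi^1_k\dot\otimes\psi_k)\|_\infty$, and then use \eqref{37} together with the $L_\infty$ bound \eqref{31} on $\varphi$ to show $\sum d_k\tr|x_k|\le C'\int\sum d_k\tr|\psi_k(t)x_k|\,dm(t)$ via the polar decomposition of $x_k$. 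This gives directly that $(\psi^1_n\dot\otimes\psi_n)$ is randomly Sidon whenever $(\psi^1_n)$ is, and iterating with $\psi^1_n=\psi_n^{\dot\otimes k}$ closes the induction cleanly without ever leaving the ``randomly Sidon'' world.
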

\begin{proof} Assume (i). Let $(\psi^1_n)$ be any randomly Sidon system.
We will show that $(\psi^1_n\dot\otimes\psi_n)$ is randomly Sidon.
 Fix $t_2$. Then
 $$\sum d_k \tr|\psi_k(t_2) x_k  |\le C \E {\rm ess}\sup\n_{t_1}|\sum d_k \tr( \psi_k(t_2) x_k g_k\psi^1_k(t_1))|$$ and hence
$$\int \sum d_k \tr|\psi_k  x_k  | dm  \le  C \E {\rm ess}\sup\n_{t_1,t_2}|\sum d_k \tr( \psi_k(t_2) x_k g_k\psi^1_k(t_1))|=C\E \| \sum d_k \tr(x_k g_k\psi^1_k\dot\otimes  \psi_k)\|.$$
Now we claim that \eqref{31} and \eqref{37}  imply
$  \sum d_k \tr|x_k| \le C'\int \sum d_k  \tr|\psi_k(t) x_k  | dm(t) . $
Indeed, let $x_k=v_k|x_k| $ be the polar decomposition.
Then $$\tr|x_k|= \int \tr  ( v_k^* {\varphi_k(t)}^*\psi_k(t) v_k |x_k|) dm(t)
= \int \tr  ( v_k^* {\varphi_k(t)}^*\psi_k(t) x_k) dm(t)\le C' \int   \tr|\psi_k(t) x_k  | dm(t) ,$$ from which the claim  follows. This shows that (i) implies
that $(\psi^1_n\dot\otimes\psi_n)$ is randomly Sidon.
In particular taking $\psi^1_n=\psi_n$ we find that
$(  \psi_n\dot\otimes  \psi_n) $ is randomly  Sidon.
Iterating this   argument  
we obtain (ii).
 (ii) $\Rightarrow$ (i) is trivial. 
\end{proof}
\begin{rem}\label{rbath} By the same reasoning, assuming that $(\varphi_n, \psi_n) $   satisfy 
\eqref{33} and \eqref{37}, and
  that $(\varphi_n)$ satisfies
\eqref{31}, one shows
that 
if $(  \psi_n) $ is   ${\dot\otimes}^k$-Sidon
then it is   ${\dot\otimes}^{k+1}$-Sidon.
\end{rem}
We now come to the main point: the comparison between
Sidon and randomly Sidon. The generalization of Rider's result 
from \cite{Ri}
in our new framework is:  
\begin{thm}\label{rs4} Let $(\varphi_n, \psi_n) $ be systems satisfying
\eqref{31}, \eqref{33} and \eqref{37}. \\
 If  $(\psi_n)$ is randomly Sidon, then
 it  is ${\dot\otimes}^k$-Sidon for all $k\ge 4$.
\end{thm}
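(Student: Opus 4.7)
The plan is to deduce Theorem \ref{rs4} almost immediately from Proposition \ref{P} combined with Remark \ref{rbath}; the delicate machinery has been packaged inside those statements, so the task reduces to identifying the correct specialization.

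The key move is to apply Proposition \ref{P} to the diagonal choice $T_1 = T_2 = T$, $m_1 = m_2 = m$, $\psi^1_n = \psi^2_n = \psi_n$, and $\varphi^1_n = \varphi_n$. Its three hypotheses match precisely the ones of Theorem \ref{rs4}: the randomly Sidon assumption on $(\psi_n)$ plays the role of the input on $(\psi^2_n)$; the biorthogonality \eqref{37} is the equivalent form \eqref{37c} required of the pair $(\varphi^1_n, \psi^1_n)$; and \eqref{31} together with \eqref{33} give uniform $L_\infty$-boundedness of all three systems, since \eqref{31} is simply \eqref{33} applied to $\varphi_n$. The conclusion is that $\Psi_n := \psi_n \dot\otimes \psi_n$, viewed as a system on $T \times T$, is ${\dot\otimes}^2$-Sidon. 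Unwinding Definition \ref{d3'}(iii) applied to $\Psi_n$, the product $\Psi_n(t_1,t_2)\Psi_n(s_1,s_2)$ equals $\psi_n(t_1)\psi_n(t_2)\psi_n(s_1)\psi_n(s_2)$, namely four evaluations of $\psi_n$; hence this is exactly ${\dot\otimes}^4$-Sidonicity of $(\psi_n)$.

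To extend from $k = 4$ to all $k \ge 4$, I would invoke Remark \ref{rbath}, which asserts, under exactly the standing hypotheses \eqref{31}, \eqref{33}, \eqref{37} of the theorem, that ${\dot\otimes}^k$-Sidon implies ${\dot\otimes}^{k+1}$-Sidon; a trivial induction finishes the proof. The only subtle point to verify in execution is that Proposition \ref{P} is genuinely applicable with the same triple $(\varphi_n, \psi_n)$ playing both sides, but its statement imposes no independence or distinctness between the two input systems, so this is immediate. There is accordingly no substantial obstacle left: the real work is already absorbed into Proposition \ref{P} (itself resting on Theorem \ref{t2}, the interpolation Lemma \ref{lem0}, and Talagrand's Lemma \ref{lem1}).
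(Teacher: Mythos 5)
Your proof is correct and coincides with the paper's own argument: both apply Proposition \ref{P} with $\psi^1_n = \psi^2_n = \psi_n$ (and $\varphi^1_n = \varphi_n$ providing the biorthogonal pair required by \eqref{37c}) to conclude that $\psi_n \dot\otimes \psi_n$ is ${\dot\otimes}^2$-Sidon, hence $(\psi_n)$ is ${\dot\otimes}^4$-Sidon, and then bootstrap via Remark \ref{rbath}. There is no substantive difference in route or in the hypotheses you verify.
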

\begin{proof}  Proposition \ref{P} with
$\varphi^1=\varphi^2$ and $\psi^1=\psi^2=\psi$
  shows that $(\psi_n {\dot\otimes}^2 \psi_n)$ is ${\dot\otimes}^2$-Sidon.
 Equivalently $(\psi_n)$  is ${\dot\otimes}^4$-Sidon.
 By Remark \ref{rbath}, it is ${\dot\otimes}^k$-Sidon for all $k\ge 4$.
\end{proof}
\begin{cor}[Rider, circa 1975, unpublished]\label{cors4}
A sequence $(\varphi_n)$   of distinct irreducible representations
on a compact group is Sidon iff it is randomly Sidon.\end{cor}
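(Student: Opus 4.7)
The plan is to derive the nontrivial direction (randomly Sidon $\Rightarrow$ Sidon) directly from Theorem \ref{rs4}, and to handle the easy direction by invoking Remark \ref{bat2}. The key is that for actual (unitary irreducible) representations on a compact group, the $\dot\otimes^k$-Sidon property collapses to ordinary Sidonicity because the representation is a group homomorphism.

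First I would set up the framework of \S \ref{s3} by taking $\varphi_n = \psi_n = \pi_n$ on $(G, m_G)$. I need to verify the three running hypotheses of Theorem \ref{rs4}. Since each $\pi_n(t) \in U(d_n)$, we have $\|\pi_n(t)\|_{M_{d_n}} = 1$ pointwise, so \eqref{31} and \eqref{33} hold with $C' = 1$. The Peter-Weyl orthogonality relations give \eqref{32} for distinct irreducibles, which is precisely the special case of \eqref{37} obtained by choosing $\psi_n = \varphi_n = \pi_n$. Thus the hypotheses of Theorem \ref{rs4} are satisfied.

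Next I would apply Theorem \ref{rs4} to conclude that, under the randomly Sidon hypothesis, $(\pi_n)$ is $\dot\otimes^k$-Sidon for every $k \ge 4$ (in particular for $k = 4$). Here comes the crucial observation, already flagged in Remark \ref{86}: because $\pi_n$ is a group morphism,
\begin{equation*}
\pi_n(t_1) \pi_n(t_2) \cdots \pi_n(t_k) = \pi_n(t_1 t_2 \cdots t_k),
\end{equation*}
so for any finitely supported family $(x_k)$ with $x_k \in M_{d_k}$,
\begin{equation*}
\sup_{(t_1, \ldots, t_k) \in G^k} \Bigl|\sum d_k \tr(x_k \pi_k(t_1) \cdots \pi_k(t_k))\Bigr| = \sup_{t \in G} \Bigl|\sum d_k \tr(x_k \pi_k(t))\Bigr|.
\end{equation*}
Thus the $\dot\otimes^k$-Sidon inequality \emph{is} the Sidon inequality, and $(\pi_n)$ is Sidon. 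The converse, that Sidon implies randomly Sidon (with at most the same constant), is immediate by choosing, in the randomly Sidon inequality, a specific value of the random unitary sequence $(u_k)$ that achieves essentially the supremum, as recorded in Remark \ref{bat2}.

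The only step that is not bookkeeping is invoking Theorem \ref{rs4}, whose proof is the substantive work done earlier (via Proposition \ref{P} and Theorem \ref{t2}, ultimately reducing to Talagrand's majorizing measure theorem via Lemma \ref{lem1}). In the present corollary there is no real obstacle beyond checking that the abstract hypotheses \eqref{31}, \eqref{33}, \eqref{37} are satisfied for a sequence of distinct irreducible unitary representations, which is immediate from Peter--Weyl and unitarity.
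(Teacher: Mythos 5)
Your proof is correct and takes essentially the same route the paper intends: apply Theorem \ref{rs4} (with $\varphi_n=\psi_n=\pi_n$, where \eqref{31},\eqref{33},\eqref{37} hold by unitarity and Peter--Weyl) to get $\dot\otimes^4$-Sidon, then use the representation identity $\pi_n(t_1)\cdots\pi_n(t_k)=\pi_n(t_1\cdots t_k)$ (as in Remark \ref{86}) to collapse $\dot\otimes^k$-Sidon to Sidon, with Remark \ref{bat2} giving the easy converse. (One small nit: the converse direction is obtained by integrating the Sidon inequality over the unitary rotations using $\tr|x_ku_k|=\tr|x_k|$, not by picking a single $u$ achieving the supremum, but Remark \ref{bat2} is correctly invoked and the conclusion stands.)
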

\begin{rem}\label{rbath2}  In the case of function systems, randomly 
${\dot\otimes}^k$-Sidon for some $k$ implies randomly Sidon (see
Remark \ref{rbath0}),
and hence the converse to Theorem \ref{rs4} holds, 
but it seems unclear for general matricial systems. However, the   argument
in Remark \ref{rbath0}
based on Slepian's Lemma does work for randomly \emph{central} ${\dot\otimes}^k$-Sidon.
\end{rem}
 \rem[Rider's unpublished results]
  For subsets of duals of compact non-Abelian groups,
  Rider \cite{Ri} announced in  1975  that he had solved the
  (then still open)  ``union problem"
  by proving that
   the union of two Sidon sets is Sidon.
   He also extended to the  non-Abelian case
   that randomly Sidon implies Sidon. However,
    he never published
   the proof. Around 1979, since I needed to use his result, I wrote
   to him and he kindly sent me  a long detailed handwritten letter
   describing his proof, based on a delicate estimate
   of the ratio of determinants
   appearing in Weyl's famous character formulae \cite{W}
   for representations of the unitary groups. Unfortunately that letter was lost since then
   and Rider passed away in 2008. 
   \\
   In  Corollary \ref{cors4}  we have obtained
a new proof of Rider's unpublished result that a randomly Sidon 
set $\Lambda$  is   Sidon when $\Lambda$ is a
set of   irreducible representations 
on a compact group.   In particular, since 
randomly Sidon is obviously stable by finite unions,
this is the first {\it published} proof of the stability of Sidon sets under finite unions.
(See however \cite{Wi} for \emph{connected} compact  groups, using
 the structure theory of   Lie groups).
 
   In a sequel to the present paper \cite{Pir}
 we present what is most likely but a reconstruction of Rider's original proof.
 The heart of that proof
   is a  uniform spectral gap estimate for the sequence
of the unitary groups $U(n)$ ($n\ge 1$), which  may be of
independent interest for random matrix  or free probability theory.

\section{Sidon sets of characters on a non-Abelian compact group}\label{ssco}

Although we have nothing new to add to this,
we would like to emphasize here a curious phenomenon
already observed in  \cite{Pi2},   concerning
  Sidon sets that are singletons, i.e. simply formed of 
 a single irreducible representation. 
 Though simple, this is a    nontrivial example,
 because the dimension
$d$ is allowed to tend to $\infty$, while the constants remain fixed.
\begin{thm}[\cite{Pi2}]\label{last} Let $\pi$ be an irreducible representation
of dimension $d$
on a compact group $G$ equipped with its Haar probability $m_G$. Let $\chi(x)=\tr(\pi(x))$
be its character.
Assume that  for some constant $C$
\begin{equation}\label{g0}\|\chi\|_{\psi_2}\le C.\end{equation}
Then there is $\alpha$ depending only on $C$ (and \emph{not}
on $d=\dim(\pi)$) such that for  any $a\in M_d$ we have
\begin{equation}\label{g1}\tr|a|\le \alpha \sup_{t\in G} |\tr(a\pi(t))|
.\end{equation}
Conversely, \eqref{g1} for some $\alpha$ implies \eqref{g0} with a constant
$C$ depending only on $\alpha$.
\end{thm}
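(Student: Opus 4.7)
The proof has two halves; the converse $\eqref{g1}\Rightarrow\eqref{g0}$ is routine, while the forward direction $\eqref{g0}\Rightarrow\eqref{g1}$ is the substantive one. For the converse, apply the classical Rudin argument that a Sidon system is $\Lambda(p)$ with constant $O(\sqrt p)$: combined with Plancherel $\|\tr(a\pi(\cdot))\|_2 = \|a\|_{S_2}/\sqrt d$, the Sidon bound \eqref{g1} gives $\|\tr(a\pi(\cdot))\|_p\le C_0(\alpha)\sqrt p\,\|a\|_{S_2}/\sqrt d$ for all $p\ge 2$ and $a\in M_d$. Specializing to $a = I$ and invoking Remark \ref{lamp} yields $\|\chi\|_{\psi_2}\lesssim\alpha$.

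For the forward direction, the strategy is to apply Corollary \ref{69'} (equivalently Remark \ref{71''}) to the singleton matricial system $\{\pi\}$. The bound \eqref{33} is automatic with $C'=1$ since $\pi(t)$ is unitary, and biorthogonality \eqref{37} holds with $\psi_n = \varphi_n = \pi$ by Schur orthogonality. What needs verification is that the entry system $\{d^{1/2}\pi_{ij}\mid 1\le i,j\le d\}$ is $O(1)$-dominated by the Gaussians $\{d^{1/2}g_{ij}\}$, which by Theorem \ref{87} amounts to the matricial subGaussian condition \eqref{11} for $\pi$ with constant $C''=C''(C)$. Once this is in hand, Corollary \ref{69'} returns that $\{\pi\}$ is $\dot\otimes^2$-Sidon, and since $\pi(t_1)\pi(t_2) = \pi(t_1 t_2)$ sweeps out $\pi(G)$ as $(t_1,t_2)$ varies, this collapses immediately to the Sidon bound \eqref{g1}.

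The crux of the argument is therefore the implication $\|\chi\|_{\psi_2}\le C\Rightarrow \|\tr(a\pi(\cdot))\|_{\psi_2}\le C''\|a\|_{S_2}/\sqrt d$ uniformly in $d$. Via Remark \ref{lamp} this reduces to $\|\tr(a\pi(\cdot))\|_p\le C''\sqrt p\,\|a\|_{S_2}/\sqrt d$. For even $p = 2k$, I would rewrite
$$\int_G|\tr(a\pi(s))|^{2k}\,ds = \tr\bigl((a\otimes\bar a)^{\otimes k}\,P_k\bigr),\qquad P_k := \int_G (\pi\otimes\bar\pi)^{\otimes k}(s)\,ds,$$
where $P_k$ is the orthogonal projection onto the $G$-invariants of $(\pi\otimes\bar\pi)^{\otimes k}$. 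Since $|\chi|^{2k}$ is the character of this tensor representation, the hypothesis provides $N_k := {\rm rank}(P_k) = \|\chi\|_{2k}^{2k}\le (2kC^2)^k$. The remaining step is the pairing bound
$$\tr\bigl((a\otimes\bar a)^{\otimes k}P_k\bigr)\le N_k\,(\|a\|_{S_2}^2/d)^k,$$
which for $k = 1$ is the Plancherel equality with $N_1 = 1$ coming from Schur's lemma applied to the irreducible $\pi$.

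The main obstacle is this pairing bound for $k\ge 2$. A naive Cauchy-Schwarz $|\tr(AP_k)|\le \sqrt{N_k}\,\|A\|_{S_2}$ with $A = (a\otimes\bar a)^{\otimes k}$ gives only $\sqrt{N_k}\,\|a\|_{S_2}^{2k}$, losing a factor $d^k$ versus the target. Overcoming this requires the Schur--Weyl-type structure of $P_k$: its invariants are spanned by pair matchings of the $2k$ tensor factors, and pairing $(a\otimes\bar a)^{\otimes k}$ against such a matching factors into $k$ instances of the single-copy computation, each contributing $\|a\|_{S_2}^2/d$ rather than $\|a\|_{S_2}^2$. This factorization is where the irreducibility of $\pi$ enters essentially---via Schur's lemma forcing the single-copy invariant to be the line through $I/\sqrt d\in M_d$---and it constitutes the representation-theoretic heart of the proof. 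Once this estimate is secured, the matricial machinery of \S\ref{s3} delivers \eqref{g1}.
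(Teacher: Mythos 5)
Your converse direction is fine and matches the paper's; the trouble is entirely in the forward direction.

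Your plan is to first upgrade the hypothesis $\|\chi\|_{\psi_2}\le C$ to the full matricial subGaussian condition \eqref{11}, i.e.\ $\|\tr(a\pi(\cdot))\|_{\psi_2}\lesssim \|a\|_{S_2}/\sqrt d$ uniformly in $a\in M_d$, and then feed that into Corollary~\ref{69'}. The second step is sound, but the first is where the difficulty is hiding, and the argument you sketch for it does not work. You reduce to the pairing bound
$\tr\bigl((a\otimes\bar a)^{\otimes k}P_k\bigr)\le N_k(\|a\|_{S_2}^2/d)^k$
and justify it by saying that the invariants of $(\pi\otimes\bar\pi)^{\otimes k}$ are ``spanned by pair matchings of the $2k$ tensor factors.'' That description of the commutant is the first fundamental theorem of invariant theory for $U(d)$, and it is \emph{not} available for an arbitrary compact $G$: since $\pi(G)\subset U(d)$, the space of $G$-invariants is in general strictly larger than the span of the Brauer/Weingarten pairings, and its elements have no matching structure. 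The subGaussian hypothesis on $\chi$ controls only the dimension $N_k=\operatorname{rank}P_k$, not the geometry of the invariant subspace, so the proposed factorization ``$k$ copies of the single-copy computation'' has no basis. You have correctly identified that Cauchy--Schwarz loses a factor $d^k$, but the fix you propose is specific to $U(d)$.

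There is also a structural concern: what you are trying to prove as an intermediate step --- that $\|\chi\|_{\psi_2}\le C$ already forces the whole entry system $\{d^{1/2}\pi_{ij}\}$ to be subGaussian --- is, by the converse part of the theorem, essentially equivalent to \eqref{g1} itself. So the route through \eqref{11} does not obviously make the problem easier; the paper explicitly flags exactly this implication as the ``curious'' phenomenon, and proves it as an output of the theorem, not an input. The paper's actual argument sidesteps \eqref{11} entirely. It introduces the translation-invariant metric $\delta(x,y)=(d\,\tr|\pi(x)-\pi(y)|^2)^{1/2}$ on $G$, notes that $\tr|\pi(t)-I|^2=2(d-\Re\chi(t))$ so the subGaussian bound on $\chi$ gives a small-ball estimate $m_G(\{\delta(t,1)<\vp d\})\le e\exp(-d^2(1-\vp^2/2)^2C^{-2})$, and applies Sudakov's minoration to the centred Gaussian process $t\mapsto d\,\tr(g\pi(t))$ at the single scale $\vp=1$. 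This yields $d^2\lesssim_C \E\sup_t|d\,\tr(g\pi(t))|$, i.e.\ that $\{\pi\}$ is randomly \emph{central} Sidon with a constant depending only on $C$; Proposition~\ref{46} then upgrades randomly central Sidon to Sidon. The subGaussian assumption is only used at one scale for one scalar quantity (the character), which is why it suffices --- this is the key idea your proposal is missing.
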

It seems curious that the subGaussian nature of the character
of an irreducible representation  $\pi$ expressed by \eqref{g0}
suffices by itself to imply the  strong
property of the whole range of $\pi$ expressed by \eqref{g1}.
\begin{proof} Assume \eqref{g0}. 
Let $g$ be a random $d\times d$ Gaussian matrix
as in \S \ref{s3}. Let $\d$ be the metric defined on $G$
by $\d(x,y)= (d \ \tr |\pi(x)-\pi(y)|^2)^{1/2}$.
Let $N(\vp)$ be the smallest number of a covering
of $G$ by open balls of radius $\vp$ for the metric $\d$.
Since this metric is   translation invariant, we have
$$ m_G(\{x\mid \d(x,1)<\vp\})^{-1}\le N(\vp)$$
(and in fact
$  N(\vp)$ is essentially equivalent to
$  m_G(\{x\mid \d(x,1)<\vp\})^{-1}$). Since $\tr |\pi(t)-I|^2=2(d-\Re (\chi(t)))$, we have
  for any $0<\vp<\sqrt 2$
\begin{equation}\label{43} m(\{t\mid \d(t,1)<\vp d \})=m(\{ \Re( \chi) >d(1-\vp^2/2)\}). \le e \exp{-(d^2(1-\vp^2/2)^2C^{-2})}.\end{equation}
 Therefore Sudakov's minoration (see e.g. \cite[p.69]{Piv} or \cite{Ta2})
tells us that that there is a numerical constant $c'$
 such that
\begin{equation}\label{44} \sup_{\vp>0} \vp d \left(\log \frac{1}{m_G(\{t\mid \d(t,1)<\vp d\}}\right)^{1/2} \le c' \E \sup_{t\in G} | d\  \tr(g\pi(t)  |.\end{equation}
Choosing e.g. $\vp=1$, \eqref{43} gives us $d^2(2C)^{-1}-1 \le c'\E \sup_{t\in G} | d\  \tr(g\pi(t)  |$, and hence for all large enough $d $  we have
$d^2(2C)^{-1}/2 \le c'\E \sup_{t\in G} | d\  \tr(g\pi(t)  |$. The latter means
that the singleton $\{\pi\}$ is randomly \emph{central} Sidon with constant 
$c'4C^2/2$. By  Proposition \ref{46}, $\{\pi\dot\otimes \pi\}$, and hence $\{\pi\}$ itself, is Sidon with the same constant.
The converse implication follows from the non-Abelian analogue 
(due to  
   Fig\`a-Talamanca and   Rider) of Rudin's classical
result that Sidon sets have
$\Lambda(p)$-constants growing like $\sqrt p$, and hence 
(recall \eqref{111}) are subGaussian. For details see \cite[(37.25) p. 437]{HR} or \cite{MaPi}.
\end{proof}
In our follow-up paper \cite{Pir} we
review (and partly amend) the results of \cite{Pi2}.
  We refer the reader to \cite{Pir} for more   on
the   themes of the present paper.

\bigskip

\bigskip

 {\it Acknowledgement} The author is grateful to Prof. G. Misra for his invitation
 to visit
 the Math. Dept. of  IISc Bangalore
where part of 
 this paper was written.

 \end{document}